\documentclass[a4paper,12pt]{amsart}
\usepackage[english]{babel}
\usepackage[T1]{fontenc}
\usepackage[left=1in,right=1in,top=1.2in,bottom=1.2in]{geometry}
\usepackage{times}

\usepackage[nopatch=footnote]{microtype}
\usepackage{lscape}		%for landscape pages
\usepackage{commath,amssymb,amscd,mathrsfs,mathtools,dsfont,bbm,multirow,array,caption,comment,pifont}
\usepackage{mathabx}
\usepackage[all]{xy}
\usepackage{enumitem}
\usepackage{longtable}
\usepackage{nicematrix}
\NiceMatrixOptions{xdots/horizontal-labels}
\usepackage{diagbox}
\usepackage{cite}			%won't show labels for citation

\allowdisplaybreaks

\usepackage[dvipsnames,svgnames,table]{xcolor}
\definecolor{red}{RGB}{255,25,25}
\definecolor{blue}{RGB}{25,50,200}
\usepackage{tikz-cd}
\usetikzlibrary{decorations.pathreplacing}

\usepackage{thmtools}   %load after amsthm and before cleveref to fix the \cref bug

\usepackage[pagebackref,linktocpage]{hyperref}

\hypersetup{
hypertexnames=false,
pdftitle={},				% title
pdfauthor={Hu-Jiang},	% author
colorlinks=true,			% false: boxed links; true: colored links
linkcolor=red,			% color of internal links (change box color with linkbordercolor)
citecolor=MidnightBlue,	% color of links to bibliography
filecolor=magenta,		% color of file links
urlcolor=red	% color of external links
}

\usepackage{cleveref}	% after hyperref!!

\newtheorem{theorem}{Theorem}[section]
\crefname{theorem}{Theorem}{Theorems}
\newtheorem{lemma}[theorem]{Lemma}
\crefname{lemma}{Lemma}{Lemmas}
\newtheorem{proposition}[theorem]{Proposition}
\crefname{proposition}{Proposition}{Propositions}

\crefname{prop}{Proposition}{Propositions}
\newtheorem{corollary}[theorem]{Corollary}
\crefname{corollary}{Corollary}{Corollaries}

\crefname{cor}{Corollary}{Corollaries}
\newtheorem{conjecture}[theorem]{Conjecture}
\crefname{conjecture}{Conjecture}{Conjectures}

\crefname{conj}{Conjecture}{Conjectures}
\newtheorem*{conj*}{Conjecture}
\crefname{conj}{Conjecture}{Conjectures}

\theoremstyle{definition}
\newtheorem{definition}[theorem]{Definition}
\crefname{definition}{Definition}{Definitions}

\crefname{defn}{Definition}{Definitions}

\crefname{example}{Example}{Examples}

\crefname{notation}{Notation}{Notation}
\newtheorem*{notation*}{Notation}
\crefname{notation}{Notation}{Notation}
\newtheorem*{convention*}{Convention}
\crefname{convention}{Convention}{Convention}

\crefname{problem}{Problem}{Problems}
\newtheorem{question}[theorem]{Question}
\crefname{question}{Question}{Questions}

\crefname{condition}{Condition}{Conditions}
\newtheorem{assumption}[theorem]{Assumption}
\crefname{assumption}{Assumption}{Assumptions}

\theoremstyle{remark}

\crefname{rmk}{Remark}{Remarks}
\newtheorem*{rmk*}{Remark}
\crefname{rmk}{Remark}{Remarks}
\newtheorem{remark}[theorem]{Remark}
\crefname{remark}{Remark}{Remarks}

\crefname{fact}{Fact}{Facts}
\newtheorem{claim}[theorem]{Claim}
\crefname{claim}{Claim}{Claims}
\newtheorem*{claim*}{Claim}
\crefname{claim}{Claim}{Claims}

\crefname{step}{Step}{Steps}

\crefname{case}{Case}{Cases}

\numberwithin{equation}{section}
\renewcommand{\emptyset}{\varnothing}

\newcommand{\doi}[1]{\href{https://doi.org/#1}{{\tt doi:#1}}}

\newcommand{\bC}{\mathbf{C}}

\newcommand{\bN}{\mathbf{N}}

\newcommand{\bQ}{\mathbf{Q}}
\newcommand{\bR}{\mathbf{R}}

\newcommand{\bZ}{\mathbf{Z}}

\newcommand{\bk}{\mathbf{k}}
\newcommand{\be}{\boldsymbol{e}}

\newcommand{\bt}{\boldsymbol{t}}

\newcommand{\bx}{\boldsymbol{x}}

\newcommand{\Aut}{\operatorname{Aut}}

\newcommand{\diff}{\mathrm{d}}

\newcommand{\id}{\operatorname{id}}

\newcommand{\lov}{\operatorname{lov}}

\newcommand{\N}{\mathsf{N}}

\newcommand{\num}{\equiv}
\newcommand{\nullity}{\operatorname{nullity}}
\newcommand{\wnum}{\equiv_{\mathsf{w}}}

\newcommand{\plov}{\operatorname{plov}}

\newcommand{\pr}{\operatorname{pr}}

\newcommand{\rank}{\operatorname{rank}}

\newcommand{\Vol}{\operatorname{Vol}}

\newcommand{\nullspace}{\operatorname{nullspace}}
\newcommand{\range}{\operatorname{range}}

\definecolor{purplefill}{HTML}{EEEDFE}
\definecolor{purpleval}{HTML}{CECBF6}
\definecolor{purpletext}{HTML}{26215C}
\definecolor{tealfill}{HTML}{E1F5EE}
\definecolor{tealval}{HTML}{9FE1CB}
\definecolor{tealtext}{HTML}{04342C}
\definecolor{amberfill}{HTML}{FAEEDA}
\definecolor{amberval}{HTML}{FAC775}
\definecolor{ambertext}{HTML}{412402}
\definecolor{grayfill}{HTML}{F1EFE8}
\definecolor{graytext}{HTML}{B4B2A9}

\newcommand{\pII}[1]{\cellcolor{purplefill}\color{purpletext}#1}
\newcommand{\pIIv}[1]{\cellcolor{purpleval}\color{purpletext}\textbf{#1}}
\newcommand{\tI}[1]{\cellcolor{tealfill}\color{tealtext}#1}
\newcommand{\tIv}[1]{\cellcolor{tealval}\color{tealtext}\textbf{#1}}
\newcommand{\aC}[1]{\cellcolor{amberfill}\color{ambertext}#1}
\newcommand{\aCv}[1]{\cellcolor{amberval}\color{ambertext}\textbf{#1}}

\newcommand{\zdot}{\cellcolor{grayfill}\color{graytext}$\cdot$}

\begin{document}

\title[A lower bound for $\mathrm{plov}$]{A lower bound for polynomial volume growth of automorphisms of zero entropy}

\author{Fei Hu}
\address{School of Mathematics, Nanjing University, Nanjing, China}
\email{\href{mailto:fhu@nju.edu.cn}{fhu@nju.edu.cn}}

\author{Chen Jiang}
\address{Shanghai Center for Mathematical Sciences \& School of Mathematical Sciences, Fudan University, Shanghai, China}
\email{\href{mailto:chenjiang@fudan.edu.cn}{chenjiang@fudan.edu.cn}}
 
\begin{abstract}
Let $X$ be a normal projective variety of dimension $d$, and let $f$ be a zero-entropy automorphism of $X$. Denote by $k$ the first-degree growth rate of $f$, so that $\deg_1(f^n) \asymp n^{k}$. We prove the sharp lower bound for the polynomial volume growth $\mathrm{plov}(f)$ of $f$:
\[
\mathrm{plov}(f) \ge d+\frac{k(k+2)}{4},
\]
equivalently giving a sharp lower bound on the Gelfand--Kirillov dimension of the associated twisted homogeneous coordinate ring. 
This improves previous lower bounds of Keeler and of Lin--Oguiso--Zhang.
In the proof, we introduce the notion of dynamical intersection polynomials and give a new characterization of $\mathrm{plov}(f)$ in terms of non-vanishing of intersection numbers.
We also establish a gap principle for polynomial volume growth: for every fixed dimension $d\ge 4$, either $\mathrm{plov}(f)=d^2$, or $\mathrm{plov}(f)\le d(d-2) + 2\lfloor d/4 \rfloor$.
This reveals a new rigidity phenomenon for zero-entropy automorphisms. As an application, in dimension $4$ we determine all possible values of $\mathrm{plov}$, thereby extending the results of Artin--Van den Bergh for surfaces and Lin--Oguiso--Zhang for threefolds.
\end{abstract}

\subjclass[2020]{
14J50, %Automorphisms of surfaces and higher-dimensional varieties
16P90, %Growth rate, Gelfand-Kirillov dimension
05E14, %Combinatorial aspects of algebraic geometry [See also 14Nxx]
16S38. %Rings arising from noncommutative algebraic geometry [See also 14A22]
}

\keywords{automorphism, zero entropy, degree growth, polynomial volume growth, Gelfand--Kirillov dimension, quasi-unipotency, restricted partition}

\thanks{The first author was supported by NSFC Grant \#12371045.
The second author was supported by the National Key Research and Development Program of China \#2023YFA1010600 and NSFC for Innovative Research Groups \#12121001.}

\maketitle

\section{Introduction}

Throughout, unless otherwise stated, we work over an algebraically closed field $\bk$ of arbitrary characteristic. Let $X$ be a normal projective variety of dimension $d$, and let $H_X$ be an ample divisor on $X$.
Denote by $\bN$ the set of all nonnegative integers.

Given an endomorphism $f$ of $X$ and an integer $i$ with $1 \le i \le d$, the intersection number
$f^*H_X^i \cdot H_X^{d-i}$
is called the \textit{$i$-th degree} of $f$ with respect to $H_X$, and is denoted by $\deg_i(f)$.
The growth rate of the degree sequence $(\deg_i(f^n))_{n\ge 1}$ is closely related to the dynamical properties of the endomorphism $f$.
The \textit{$i$-th dynamical degree} $\lambda_i(f)$ of $f$ is defined by
\[
\lambda_i(f) \coloneqq \lim_{n\to\infty} \bigl(\deg_i(f^n)\bigr)^{1/n}
= \lim_{n\to\infty} \bigl((f^n)^*H_X^i \cdot H_X^{d-i}\bigr)^{1/n}.
\]
We say that $f$ has \textit{zero entropy} if the first dynamical degree $\lambda_1(f)$ is equal to $1$; equivalently, if all dynamical degrees $\lambda_i(f)$, for $i=1,\ldots,d$, are equal to $1$.
We refer to \cite{ES13,Truong20,Dang20,JL23,Truong25,Hu24,Xie-SC,Xie-GWRH} for fundamental properties of dynamical degrees and entropy in arbitrary characteristic.

For each integer $n\ge 2$, let $\Gamma_n \subseteq X^n$ denote the \textit{$n$-th iterate graph} of $f$, namely the graph of the morphism
$x \longmapsto (f(x),\ldots,f^{n-1}(x))$
from $X$ to $X^{n-1}$.
The growth rate of the volume of $\Gamma_n$ as $n\to\infty$ is another important dynamical invariant of $f$, called the \textit{volume growth} of $f$, and is defined by
\[
\lov(f) \coloneqq \limsup_{n\to\infty} \frac{\log \Vol(\Gamma_n)}{n}.
\]

When $X$ is a smooth complex projective variety, Gromov \cite{Gromov03} proved that the topological entropy $h_{\mathrm{top}}(f)$ of $f$ is bounded from above by the volume growth $\lov(f)$, which in turn is bounded from above by the logarithm of the maximal dynamical degree:
\[
h_{\mathrm{top}}(f) \le \lov(f) \le h_{\mathrm{alg}}(f) \coloneqq \log \max_{1\le i\le d} \lambda_i(f).
\]
On the other hand, Yomdin \cite{Yomdin87} proved the reverse inequality $h_{\mathrm{top}}(f) \ge h_{\mathrm{alg}}(f)$. Hence one has the following fundamental equality in higher-dimensional algebraic and holomorphic dynamics:
\[
h_{\mathrm{top}}(f)=\lov(f)=h_{\mathrm{alg}}(f).
\]

Recently, there has been growing interest in varieties with slow dynamics; see, for example, \cite{Cantat18-ICM,LB19,CPR21,FFO21,DLOZ22,LOZ25,Hu-GK-AV}.
Surprisingly, Lin--Oguiso--Zhang \cite{LOZ25} observed an interesting connection between automorphisms of zero entropy and noncommutative algebraic geometry; see also \cite{ATVdB90,AVdB90,Keeler00,RRZ06}.

From now on, we assume that $f$ is a zero-entropy automorphism of $X$.
Equivalently, there exists some $k\in 2\bN$ such that
\[
\deg_1(f^n) \coloneqq (f^n)^*H_X \cdot H_X^{d-1} \asymp n^k \qquad \text{as } n\to\infty.
\]
This is equivalent to saying that the pullback $f^*|_{\mathsf{N}^1(X)_\mathbf{R}}$ of $f$ on the real N\'eron--Severi space $\mathsf{N}^1(X)_\mathbf{R}$ is quasi-unipotent and the maximum size of the Jordan blocks of $f^*|_{\N^1(X)_\bR}$ is $k+1$.

The authors proved in \cite[Theorem~1.5]{HJ25} that $k\le 2d-2$, extending a previous result of \cite{DLOZ22} to arbitrary characteristic.
Following Cantat--Paris-Romaskevich \cite{CPR21}, the \textit{polynomial volume growth} $\plov(f)$ of $f$ is defined to be the polynomial growth rate of the volume of $\Gamma_n$ as $n\to\infty$, i.e.,
\[
\plov(f) \coloneqq \limsup_{n\to\infty} \frac{\log \Vol(\Gamma_n)}{\log n}.
\]
A useful fact due to Lin--Oguiso--Zhang states that $\plov(f)$ has the same parity as the dimension $d$ of $X$ (see \cite[Corollary~2.20]{LOZ25} or \cref{prop:same-parity}), which we call the \textit{First Gap Principle} for $\plov$.
In \cite[Theorem~1.1]{HJ25}, the authors also established an upper bound
\begin{equation}
\label{eq:plov-upper-bound}
\plov(f)\le (k/2+1)d,
\end{equation}
and hence, in particular, the optimal general upper bound
\[
\plov(f)\le d^2,
\]
which affirmatively answers questions of Cantat--Paris-Romaskevich \cite[Question~4.1]{CPR21} and Lin--Oguiso--Zhang \cite[Question~1.5(1)]{LOZ25}.

For the lower bound, Lin--Oguiso--Zhang \cite[Theorem~5.1]{LOZ25} showed that $\plov(f)\geq d+2k-2$, improving Keeler's lower bound $d+k$ \cite[Theorem~6.1(3)]{Keeler00} (see also \cite[Lemma~2.16]{LOZ25}). The first goal of this paper is to provide the optimal lower bound for $\plov(f)$ in terms of $d$ and $k$.

\begin{theorem}\label{thm:main1}
Let $X$ be a normal projective variety of dimension $d$ over $\bk$, and let $f$ be an automorphism of $X$ such that $\deg_1(f^n) \asymp n^k$ as $n\to\infty$.
Then we have the optimal lower bound
\begin{align*}
\plov(f) \ge d+\frac{k(k+2)}{4}.
\end{align*}
Moreover, one has
\[
k=2d-2 \iff \plov(f) = d^2.
\]
\end{theorem}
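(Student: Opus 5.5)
The plan is to first turn $\plov(f)$ into a statement about non-vanishing of intersection numbers, and then to exhibit one explicit non-vanishing product whose "weight" equals $k(k+2)/4$.

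\textbf{Step 1: reduction to a unipotent action.} Replacing $f$ by an iterate changes neither $k$ nor $\plov(f)$, so we may assume $f^*|_{\N^1(X)_\bR}$ is unipotent. Write $f^*=\exp(M)$ with $M$ nilpotent, $M^kH_X\neq 0$ and $M^{k+1}=0$ on the span of the $f^{*n}H_X$.

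\textbf{Step 2: expanding the volume.} Up to bounded factors, $\Vol(\Gamma_n)$ equals $\bigl(\sum_{i=0}^{n-1}(f^i)^*H_X\bigr)^d$. Expand $(f^i)^*H_X=\sum_j \frac{i^j}{j!}M^jH_X$ and sum over $i$. This shows $\Vol(\Gamma_n)$ is a polynomial in $n$ whose coefficients are linear combinations of the intersection numbers
\[
\langle j_1,\dots,j_d\rangle \coloneqq M^{j_1}H_X\cdots M^{j_d}H_X .
\]
The number $\langle j_1,\dots,j_d\rangle$ contributes in degree $d+\sum_l j_l$.

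\textbf{Step 3: the characterization of $\plov(f)$.} I would prove
\[
\plov(f)=d+\max\Bigl\{\textstyle\sum_l j_l : \langle j_1,\dots,j_d\rangle\neq 0\Bigr\}.
\]
The inequality $\le$ is immediate from Step 2. The inequality $\ge$ requires ruling out cancellation in the leading coefficient, and I expect this to be the main obstacle. The approach I would try first uses that for $0\le j\le k$ the classes $\lim_n (f^n)^*H_X/n^{j}$, after subtracting lower-order terms, are nef. Set $m=\sum_l j_l$. The leading coefficient in $n^{d+m}$ is then a positive combination of mixed intersections of nef classes, hence $\ge 0$. Pass to the nef "graded pieces" by a lexicographic induction: first maximize $j_1$, then $j_2$, and so on. This should show the leading coefficient is bounded below by a positive multiple of any single non-zero product $\langle j_1,\dots,j_d\rangle$ with $\sum_l j_l=m$.

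\textbf{Step 4: exhibiting a non-zero product.} Write $k=2m$. The bound $k\le 2d-2$ of \cite[Theorem~1.5]{HJ25} gives $m\le d-1$. The goal is to show
\[
M^{k}H_X\cdot M^{k-2}H_X\cdots M^{2}H_X\cdot H_X^{\,d-m}\neq 0 .
\]
Its weight is $k+(k-2)+\cdots+2=m(m+1)=k(k+2)/4$, which is exactly the claimed bound. For the proof, I would use that $f^*$ preserves intersection numbers, so $M$ acts as a derivation annihilating the top intersection form. By Jacobson--Morozov, complete $M$ to an $\mathfrak{sl}_2$-action; the $d$-linear intersection form is then $\mathfrak{sl}_2$-invariant.

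The proof would proceed by induction on $m$. Let $L=M^kH_X$, which is a nef class with $L^2\equiv 0$ in the relevant sense. Cutting by $L$ should reduce to a statement of the same shape with the block size lowered by $2$ on the "$L$-slice". The derivation identities let us move powers of $M$ between factors. The Hodge index theorem, or Khovanskii--Teissier inequalities for nef classes, shows that if the product vanished, then some $M^jH_X$ with $j\le k$ would vanish, a contradiction. This inductive Hodge-index step is the second delicate point.

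\textbf{Step 5: the equivalence.} If $k=2d-2$, the lower bound gives $\plov(f)\ge d+(d-1)d=d^2$, and $\plov(f)\le d^2$ always holds, so $\plov(f)=d^2$. Conversely, if $\plov(f)=d^2$, the upper bound \eqref{eq:plov-upper-bound} gives $d^2\le(k/2+1)d$, so $k\ge 2d-2$, and hence $k=2d-2$. Optimality of the lower bound would be shown by examples: automorphisms of abelian varieties, or products, realizing a single Jordan block together with trivial factors.
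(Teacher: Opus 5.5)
Your outline has the same architecture as the paper: reduce to the unipotent case, characterize $\plov(f)$ by non-vanishing of intersection numbers, exhibit a non-zero one of weight $\ge r(r+1)$, then use abelian examples and the upper bounds. Steps 1, 2 and 5 are fine. However, the two steps that carry the proof are not supplied, and the mechanisms you propose for them fail.

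\textbf{Step 3 (no cancellation).} Nef ``graded pieces'' cannot give this. For $j<k$ the class $M^jH_X$ is in general not nef, and the top-degree coefficients have mixed signs. Already for $d=2$, $k=2$ this happens:
\begin{itemize}
\item the derivation identity gives $MH_X\cdot H_X=0$, so $(MH_X)^2<0$ by Hodge index;
\item the next identity gives $(MH_X)^2=-M^2H_X\cdot H_X$;
\item hence, with $w=M^2H_X\cdot H_X$, the $n^4$-coefficient of $\Vol(\Gamma_n)$ is $\frac13 w-\frac14 w=\frac1{12}w$, which is positive only after cancellation.
\end{itemize}
The missing idea is to evaluate at integer points. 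One has $\Phi_{f,H_X}(m_1,\ldots,m_d)=\prod_j (f^{m_j})^*H_X>0$, so letting $t\to\infty$ along $t\bx$ gives $\Phi^{\mathrm{top}}_{f,H_X}\ge 0$ on $\bR^d$. Since $\Delta_n(f,H_X)^d=\sum_{0\le m_j<n}\Phi_{f,H_X}(m)$, a Riemann-sum argument shows the leading coefficient is $\int_{[0,1]^d}\Phi^{\mathrm{top}}_{f,H_X}\,\diff\bx>0$ (\cref{thm:plov=d+max}).

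\textbf{Step 4.} The sketch relies on false or unjustified claims.
\begin{itemize}
\item[(a)] $(M^kH_X)^2$ need not vanish. Take $f=g\times g$ on $Y\times Y$ with $H_X=p_1^*H_Y+p_2^*H_Y$. Then $M^kH_X=p_1^*a+p_2^*a$ with $a=M_Y^kH_Y\neq 0$ nef, and $(M^kH_X)^2=2\,p_1^*a\cdot p_2^*a\not\equiv 0$. So ``cutting by $M^kH_X$ lowers the block size by $2$'' breaks down.
\item[(b)] Jacobson--Morozov gives an $\mathrm{sl}_2$-triple in $\mathrm{gl}(\N^1(X)_\bR)$. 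Nothing makes $h$ or the lowering operator derivations of the $d$-linear intersection form.
\item[(c)] Hodge index and Khovanskii--Teissier need nef classes, and $M^{2i}H_X$ for $0<i<r$ is not nef in general.
\end{itemize}

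The paper instead imports \cite[Theorem~3.2]{HJ25}. Choosing $t_r,t_{r-1},\ldots$ lexicographically maximal gives:
\begin{itemize}
\item $t_j\ge 1$ for all $j$;
\item $w_{\kappa(\bt)}>0$;
\item the vanishing $M_j\cdot N^sH_X\wnum 0$ for $s\ge 2r-2j+1$.
\end{itemize}
This vanishing is what makes $N^{2r-2j}H_X$ behave like a limit of ample classes on the slice $M_j$, and it replaces your missing nefness. The paper transfers all of this from $N$ to $L$ via the unitriangular change of basis (\cref{prop:thm3.2}). The bound $|\kappa(\bt)|=\sum_j 2jt_j\ge r(r+1)$ then uses only $t_j\ge 1$.

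Your particular product, with a single copy of each $M^{2i}H_X$, is stronger than what is needed. It is also not what this positivity controls when some $t_j\ge 2$, as in the product example.
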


This theorem affirmatively answers an open question of the authors proposed in 2022; see \cite[Question~2.2 and Remark~2.3]{HJ25}.

\begin{remark}
In order to get the lower bound in \cref{thm:main1}, without loss of generality, we may assume that $f^*|_{\mathsf{N}^1(X)_\mathbf{R}}$ is unipotent.
We first introduce the so-called ``formal log-monodromy'' operator $L$ associated with $f$ (see \Cref{subsec:plov}), which also plays an important role in the proof of \cref{thm:k2d-4} later.
The next key ingredient is the \emph{dynamical intersection polynomial}
\[
\Phi_{f, H_X}(x_1,\ldots,x_d) \coloneqq \prod_{j=1}^d \bigl(\exp(x_j L)H_X\bigr) = \prod_{j=1}^d \, \sum_{i=0}^{k} \frac{L^iH_X}{i!} x_j^i \in \bR[x_1,\ldots,x_d]
\]
associated with $f$ and $H_X$.
It satisfies that for every $(m_1,\ldots, m_d)\in \bZ^d$, 
\[
\Phi_{f, H_X}(m_1,\ldots, m_d) = (f^{m_1})^*H_X\cdots (f^{m_d})^*H_X > 0.
\]
We show in \cref{thm:plov=d+max} via an elementary Riemann-sum asymptotic argument that
\[
\plov(f)=d+\deg \Phi_{f, H_X}.
\]
This interpretation reduces the calculation of $\plov(f)$ to the non-vanishing property of certain intersection numbers.
On the other hand, by the geometric positivity (see \cref{prop:thm3.2}), we readily obtain that the total degree $\deg \Phi_{f, H_X}$ of $\Phi_{f, H_X}$ is at least $k(k+2)/4$.
\end{remark}

From the perspective of classifying all possible values of $\plov$, equivalently of the Gelfand--Kirillov dimension (see \cref{rmk:GK-dim-gap}), it remains to consider the case in which $f$ does not have maximal first-degree growth, i.e., $k\le 2d-4$.
In this case, our second main theorem improves our previous upper bound for $\plov(f)$ from $d(d-1)$ to $d(d-2)+2\lfloor d/4\rfloor$ for $d\ge 4$.
Consequently, for any fixed dimension $d\ge 4$, the invariant $\plov$ cannot take any value in the interval $\bigl(d(d-2)+2\lfloor d/4\rfloor,\, d^2\bigr)$; see \cref{cor:gap}.
Motivated by this phenomenon, we later propose a more general gap principle for $\plov$; see \cref{conj:SGP}.

\begin{theorem}
\label{thm:k2d-4}
Let $X$ be a normal projective variety of dimension $d \ge 4$ over $\bk$, and let $f$ be a zero-entropy automorphism of $X$.
Suppose that the first-degree sequence $\deg_1(f^n)$ does not have maximal growth, i.e., $\deg_1(f^n)=O(n^{2d-4})$.
Then
\[
\plov(f)\le d(d-2)+2\lfloor d/4\rfloor
=
\begin{cases}
d(d-2)+\lfloor d/2\rfloor, & \text{if } d \equiv 0,1 \pmod 4; \\
d(d-2)+\lfloor d/2\rfloor-1, & \text{if } d \equiv 2,3 \pmod 4.
\end{cases}
\]
\end{theorem}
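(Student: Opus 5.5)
By \cref{thm:plov=d+max}, $\plov(f)=d+\deg\Phi_{f,H_X}$, so it suffices to show
\[
\deg \Phi_{f,H_X}\le d(d-3)+2\lfloor d/4\rfloor .
\]
After replacing $f$ by an iterate, we may assume $f^*|_{\N^1(X)_\bR}$ is unipotent. Since $k$ is even and $\deg_1(f^n)=O(n^{2d-4})$, we have $k\le 2d-4$. Expanding the product, $\deg\Phi_{f,H_X}$ is the maximum of $i_1+\cdots+i_d$ over all tuples with $0\le i_j\le k$ and
\[
L^{i_1}H_X\cdots L^{i_d}H_X\neq 0 .
\]
The plan is therefore to bound which monomials in the classes $L^iH_X$ can have nonzero top intersection.

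The first step is to establish the structural properties of $L$. Since $f^*$ is a unipotent ring automorphism of the numerical ring $\bigoplus_r \N^r(X)_\bR$, its logarithm $L$ acts as a nilpotent derivation. On the top degree $\N^d$, $f^*$ acts trivially, so $L$ kills $\N^d$. This gives an integration-by-parts identity:
\[
\sum_j L^{i_1}H_X\cdots L^{i_j+1}H_X\cdots L^{i_d}H_X=0 .
\]
Introduce the monodromy weight filtration of $L$ on each $\N^r$. Weights add under intersection products, and the pairing $\N^r\times\N^{d-r}\to\bR$ is compatible with it. Since the top Jordan block on $\N^1$ has size $k+1$, the class $L^iH_X$ sits in weight $\le k-2i$. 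A nonzero top product must have weight $0$, which gives $\sum_j (k-2i_j)\ge 0$, i.e.\ $\sum_j i_j\le dk/2\le d(d-2)$. This recovers the old bound $\plov(f)\le d(d-1)$, so the real content lies in an improvement of $d-2\lfloor d/4\rfloor$ over this.

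The second step is to refine this count using the constraints in intermediate degrees. Let $m_r$ be the nilpotency order (maximal weight) of $L$ on $\N^r(X)_\bR$. Duality gives $m_r=m_{d-r}$. The argument behind $k\le 2d-2$ in \cite{HJ25} (positivity, \cref{prop:thm3.2}) should bound how fast $m_r$ can grow with $r$ once $m_1=k\le 2d-4$. I would aim to show that a product of $r$ factors $L^{i_j}H_X$ is nonzero only if it survives in $\N^r$ with weight at most $m_r$. Splitting the $d$ factors into two halves and pairing them then constrains $\sum i_j$ by $\sum$ of the half-weights. Equality in $\sum(k-2i_j)\ge 0$ would force every factor to sit exactly at the extreme weight, with all partial products lying in the top graded pieces. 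The First Gap Principle (\cref{prop:same-parity}) and the vanishing $L^{k+1}H_X=0$ should rule this out unless the exponents form a very specific pattern. That pattern is a restricted partition of the total weight with parts in $\{0,\dots,k\}$, and counting defects is what produces the term $2\lfloor d/4\rfloor$.

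The main obstacle is this combinatorial optimisation: turning the weight constraints in the various $\N^r$ into a sharp bound. I expect the residue of $d$ mod $4$ to enter through parity. Each group of four factors can absorb a defect of at most $2$ rather than $4$ relative to $d(d-2)$ only if consecutive factors pair up in a symmetric way, as dictated by the duality $m_r=m_{d-r}$. I would first work out the extremal configurations for $d=4,5$ by direct casework on $(i_1,\dots,i_d)$, using the integration-by-parts identity to move powers of $L$ between factors. Then I would set up an induction on $d$ that peels off four factors at a time. Finally I would check sharpness against products of surfaces and abelian examples as a guide.
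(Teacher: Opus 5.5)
There is a genuine gap. Your first step is fine, provided the ``numerical ring'' is read as $\mathrm{Sym}\,\N^1(X)_\bR$ modulo the annihilator of the top-degree pairing, since $X$ may be singular. The monodromy filtration of the nilpotent derivation $L$ is multiplicative, and the bound $\sum_j i_j\le dk/2$ is an $\mathfrak{sl}_2$ restatement of \cref{prop:vanishing-after-kd/2}, i.e.\ of the full rank of $A_{k,d,n}$. It disposes of $k\le 2d-6$. For $k=2d-4$, however, it only gives $\plov(f)\le d(d-1)$, and the whole content of the theorem lies in your second step, which is a list of hopes rather than an argument. You derive no bound on the top weights $m_r$ of $L$ on $\N^r$, and the ``defect counting'' and four-at-a-time induction are never specified.

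Worse, weight bookkeeping of this kind cannot succeed in principle. Take the balanced tuple $i_1=\cdots=i_d=d-2$, which has $\sum_j i_j=d(d-2)$. Every factor $L^{d-2}H_X$ lies in weight $\le 0$, hence so does every partial product. So no condition of the form ``a class in $\N^r$ of weight $<-m_r$ vanishes'', and no duality pairing $\N^r\times\N^{d-r}\to\bR$, can force $(L^{d-2}H_X)^d=0$, whatever the $m_r$ are. The parity principle only says that $\deg\Phi_{f,H_X}$ is even, so it cannot rescue this either. It is not the source of $\lfloor d/4\rfloor$ in the way you suggest.

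What is missing is the lexicographic use of \cref{prop:thm3.2}.

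\emph{The distinguished partition.} When $k=2r=2d-4$, the tuple $\bt$ has $r+1=d-1$ positive entries summing to $d$. Hence $\kappa(\bt)=(2r,\ldots,2j_\circ,2j_\circ,\ldots,2,0)$ has a single repeated part, $w_\lambda=0$ for all $\lambda\succ\kappa(\bt)$, and $w_{\kappa(\bt)}>0$.

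\emph{Truncated linear system.} The surviving unknowns $(w_\lambda)_{\lambda\preceq\kappa(\bt)}$ lie in the kernel of the truncated matrix $A^{\preceq\kappa(\bt)}_{2r,d,n}$, by \cref{lemma:equations-v-lambda}. \cref{thm:nullity-truncated-matrix2} shows this kernel is zero for $n\ge r(r+1)+\max\{\lfloor r/2\rfloor,j_\circ+1\}$. By \cref{lemma:Akdn-trun-decomposition}, the block $A_{2r-1,d,n}$ has full column rank once $n-r(r+1)\ge\lfloor r/2\rfloor$. So the nullity is bounded by that of $A^{\preceq\kappa(\bt^-)}_{2r-2,d-1,n-2r}$, and one inducts down to the case $r=j_\circ$, where $n$ exceeds the middle degree and \cref{thm:full-rank} applies.

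The truncation, i.e.\ deleting the columns $\lambda\succ\kappa(\bt)$, is what removes the kernel that the untruncated $A_{2r,d,n}$ generally has for $n\le dk/2$. Your balanced tuple is killed exactly this way, since $((d-2)^d)\prec\kappa(\bt)$ and $d(d-2)=r(r+1)+r$.

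\emph{Closing the bound.} Positivity $w_{\kappa(\bt)}>0$ with $|\kappa(\bt)|=r(r+1)+2j_\circ$ forces $2j_\circ<\max\{\lfloor r/2\rfloor,j_\circ+1\}$. Since $r\ge 2$, the threshold is therefore $\lfloor r/2\rfloor$, giving $\plov(f)\le d(d-2)+\lfloor d/2\rfloor$. Only at this point does parity replace $\lfloor d/2\rfloor$ by $2\lfloor d/4\rfloor$.
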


\begin{corollary}
\label{cor:gap}
Let $X$ be a normal projective variety of dimension $d\ge 2$ over $\bk$, and let $f$ be a zero-entropy automorphism of $X$. Then
\[
\plov(f)\notin \bigl(d(d-2)+2\max\bigl\{1, \, \lfloor d/4\rfloor\bigr\},\, d^2 \bigr).
\]
\end{corollary}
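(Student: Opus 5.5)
The corollary follows by combining \cref{thm:main1}, \cref{thm:k2d-4}, and the upper bound \eqref{eq:plov-upper-bound}, with a separate treatment of small dimensions. We split according to whether the first-degree growth $k$ is maximal. Recall that $k\in 2\bN$ and $k\le 2d-2$ by \cite[Theorem~1.5]{HJ25}.

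If $k=2d-2$, then \cref{thm:main1} gives $\plov(f)=d^2$. This value is not in the open interval, so there is nothing to prove.

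Now suppose $k\le 2d-4$, i.e.\ $\deg_1(f^n)=O(n^{2d-4})$. We aim to show that $\plov(f)$ is at most the left endpoint $d(d-2)+2\max\{1,\lfloor d/4\rfloor\}$.
\begin{itemize}
\item For $d\ge 4$, \cref{thm:k2d-4} gives directly
\[
\plov(f)\le d(d-2)+2\lfloor d/4\rfloor,
\]
and $\lfloor d/4\rfloor\ge 1$, so $\max\{1,\lfloor d/4\rfloor\}=\lfloor d/4\rfloor$.
\item For $d\in\{2,3\}$, we have $\max\{1,\lfloor d/4\rfloor\}=1$, so the target bound is $d(d-2)+2=d^2-2d+2$. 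Here we use \eqref{eq:plov-upper-bound} with $k\le 2d-4$:
\[
\plov(f)\le (k/2+1)d\le (d-1)d=d^2-d.
\]
For $d=2$ this gives $\plov(f)\le 2=d(d-2)+2$, which is what we need. For $d=3$ it gives $\plov(f)\le 6$, while the target is $3\cdot 1+2=5$. By the First Gap Principle (\cref{prop:same-parity}), $\plov(f)$ has the same parity as $d=3$, so $\plov(f)$ is odd and hence $\plov(f)\le 5$.
\end{itemize}

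In every case $\plov(f)$ either equals $d^2$ or is at most $d(d-2)+2\max\{1,\lfloor d/4\rfloor\}$. Therefore it cannot lie strictly between these two values.

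The only step that needs care is the small-dimensional case $d=3$. There \eqref{eq:plov-upper-bound} alone gives $6$, which misses the bound by one, and the parity result is what closes the gap. Everything else is a direct citation of the two main theorems.
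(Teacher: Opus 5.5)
Your proof is correct and follows essentially the same route as the paper: for $d\ge 4$ you combine \cref{thm:main1} with \cref{thm:k2d-4}, and for $d=3$ you use the upper bound $(k/2+1)d$ together with the parity principle, which is exactly the content of \cref{rmk:odd-d} that the paper cites. The only cosmetic difference is at $d=2$: the paper excludes $\plov(f)=3$ by parity, whereas you note that $k\le 0$ forces $\plov(f)\le 2$ via \eqref{eq:plov-upper-bound}; both arguments work.
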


\begin{remark}
\label{rmk:GK-dim-gap}
As mentioned in \cite[Section~1.4]{LOZ25}, Keeler \cite{Keeler00} implicitly proved that the polynomial volume growth $\plov(f)$ is equal to the Gelfand--Kirillov dimension of the twisted homogeneous coordinate ring associated with $f$, minus one.
Therefore, our gap principle for $\plov$ also yields a gap principle for the Gelfand--Kirillov dimension of such twisted homogeneous coordinate rings arising from zero-entropy automorphisms (introduced in \cite{ATVdB90}).
Similar gap principles for the Gelfand--Kirillov dimension of various noncommutative algebras have been extensively studied since the 1970s; see, for example, \cite{Bass72,Guivarch73,Gromov81,SSW85,AS95,KL00,Smoktunowicz06,BRS10,Bell15} and the references therein.
\end{remark}

\begin{remark}
\label{rmk:idea-pf}
This paper complements and sharpens the authors' previous work \cite{HJ25}, which established an upper bound $(k/2+1)d$ for $\plov(f)$, but it may not be optimal when $k\le 2d-4$.
Here we prove the optimal lower bound for $\plov$ and identify a genuine gap in its possible values.
The proof in \cite{HJ25} reduces, using \cite[Lemma~2.16]{LOZ25}, to the vanishing of certain intersection numbers $v_\lambda$, where $\lambda\in P(k,d,n)$ ranges over the restricted partitions of $n$ (see \Cref{subsec:partition}).
An inductive argument shows that these intersection numbers $v_\lambda$ satisfy homogeneous linear systems whose coefficient matrices are the weighted incidence matrices $A_{k,d,n}$ when $n>dk/2$.
The required combinatorial vanishing, i.e., $v_\lambda=0$ for all $n>dk/2$, then follows from the full-rank property of these matrices; see \cref{thm:full-rank}.
\end{remark}

\begin{remark}
Let us now describe the main idea of the proof of \cref{thm:k2d-4}.
As mentioned earlier, we first introduce the ``formal log-monodromy operator'' $L$ associated with $f$, as well as intersection numbers $w_\lambda$ which are variants of $v_\lambda$; see \cref{eq:w-lambda}. The advantage is that these intersection numbers $w_\lambda$ with $\lambda \in P(k,d,n)$ satisfy genuine homogeneous linear equations with the same coefficient matrices $A_{k,d,n}$ for \textit{all} $1\leq n\leq dk$; see \cref{lemma:equations-v-lambda}.

By \cref{eq:plov-upper-bound}, it suffices to consider the submaximal case when $k=2d-4$.
By the parity principle, i.e., \cref{prop:same-parity}, it suffices to prove that
$\plov(f)\le d(d-2)+\lfloor d/2\rfloor$.
Then by \cref{thm:plov=d+max}, this further reduces to showing that the intersection numbers $w_\lambda$ vanish for all $\lambda\in P(2d-4,d,n)$ and all
\[
n\ge (d-1)(d-2)+\lfloor d/2\rfloor-1.
\]
In this case, \cref{prop:thm3.2} produces a distinguished restricted partition
\[
\kappa=(2d-4,2d-6,\ldots,2j_\circ,2j_\circ,\ldots,2,0) \in P(2d-4,d,(d-1)(d-2)+2j_\circ)
\]
for some $j_\circ$ with $0\le j_\circ\le d-2$.
Now, by the geometric vanishing, i.e., \cref{prop:thm3.2}\ref{prop:thm3.2-1}, all $w_\lambda$ with $\lambda \succ \kappa$ vanish.
For $w_\lambda$ with $\lambda \preceq \kappa$, we prove in \cref{thm:nullity-truncated-matrix2} by induction that the associated truncated matrix $A_{2d-4,d,n}^{\preceq \kappa}$ has trivial nullspace whenever
\[
n\ge (d-1)(d-2) + \max\bigl\{\, \lfloor d/2\rfloor-1, \, j_\circ+1\,\bigr\}.
\]
This lower bound on $n$ is precisely chosen so that our inductive argument works.
Finally, by combining this with the geometric positivity property, i.e., \cref{prop:thm3.2}\ref{prop:thm3.2-2}, we show that the above maximum is indeed $\lfloor d/2\rfloor-1$.
\end{remark}

As an application, in dimension $4$ we determine all possible values of $\plov$, equivalently of the Gelfand--Kirillov dimension, thereby extending the results of Artin--Van den Bergh for surfaces \cite[Theorem~1.7]{AVdB90} and Lin--Oguiso--Zhang for threefolds \cite[Corollary~1.3]{LOZ25}.

\begin{corollary}
\label{cor:explicit-values-lower-dim}
Let $X$ be a normal projective variety of dimension $4$ over $\bk$, and let $f$ be an automorphism of $X$ such that $\deg_1(f^n) \asymp n^k$ as $n\to\infty$. Then
\[
\plov(f)=
\begin{cases}
4 & \text{if } k=0; \\
6 \ \text{ or } \ 8 & \text{if } k=2; \\
10 & \text{if } k=4; \\
16 & \text{if } k=6.
\end{cases}
\]
All these values are realizable by $4$-dimensional abelian varieties. 
In particular, \cite[Question~1.5(2)]{LOZ25} has an affirmative answer in dimension $4$.
\end{corollary}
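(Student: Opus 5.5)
We split into cases according to $k\in\{0,2,4,6\}$, which are the only possibilities since $k\in 2\bN$ and $k\le 2d-2=6$ by \cite[Theorem~1.5]{HJ25}. Throughout we combine the lower bound of \cref{thm:main1}, the upper bound \eqref{eq:plov-upper-bound} $\plov(f)\le (k/2+1)d$, the parity principle (\cref{prop:same-parity}), which forces $\plov(f)$ to be even when $d=4$, and \cref{thm:k2d-4}.

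\emph{Case $k=0$.} \Cref{thm:main1} gives $\plov(f)\ge 4$ and \eqref{eq:plov-upper-bound} gives $\plov(f)\le 4$, so $\plov(f)=4$.

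\emph{Case $k=2$.} \Cref{thm:main1} gives $\plov(f)\ge 4+2=6$ and \eqref{eq:plov-upper-bound} gives $\plov(f)\le 2\cdot 4=8$. Parity excludes $7$, so $\plov(f)\in\{6,8\}$.

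\emph{Case $k=4$.} \Cref{thm:main1} gives $\plov(f)\ge 4+6=10$. Since $k=4=2d-4$, \cref{thm:k2d-4} applies and gives $\plov(f)\le 4\cdot 2+2\lfloor 4/4\rfloor=10$. Hence $\plov(f)=10$. This is the case where the new upper bound is essential, because \eqref{eq:plov-upper-bound} would only give $12$.

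\emph{Case $k=6$.} This is maximal growth $k=2d-2$, so the equivalence in \cref{thm:main1} gives $\plov(f)=16$.

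For realizability, I would take $X=E^4$ for an elliptic curve $E$ and $f$ a group automorphism given by a unipotent matrix $U\in \mathrm{GL}_4(\bZ)$. On $\N^1(X)_\bR$, which contains the symmetric square of the dual lattice, the action has maximal Jordan block size $2m-1$ when $U$ has maximal Jordan block size $m$. Thus $k=2(m-1)$, and the choices $m=1,2,3,4$ give $k=0,2,4,6$. For $k=0$, $4$ and $6$ the value of $\plov$ is forced by the above, so any such example works.

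For $k=2$, both values $6$ and $8$ have to be exhibited. I would take $U=J_2\oplus I_2$ and $U=J_2\oplus J_2$, where $J_2$ is the $2\times 2$ unipotent Jordan block. I would compute $\plov$ via \cref{thm:plov=d+max} as $4+\deg\Phi_{f,H_X}$. For a product $f=g\times g'$ with $H_X$ a product polarization, the dynamical intersection polynomial factors, and $\plov$ is additive: $\plov(g\times g')=\plov(g)+\plov(g')$. By Artin--Van den Bergh for surfaces, $\plov$ of $J_2$ acting on $E^2$ equals $4$, while $\plov(\id_{E^2})=2$. This gives $4+2=6$ and $4+4=8$ respectively.

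The main obstacle is justifying this additivity rigorously. One route is to check that the iterate graph of a product is the product of the iterate graphs, so that the volumes multiply. The other route is to compute $\deg\Phi$ directly from the factorization of $\Phi_{f,H_X}$.

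Finally, the affirmative answer to \cite[Question~1.5(2)]{LOZ25} follows from the fact that every listed value is attained by an abelian variety.
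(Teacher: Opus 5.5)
Your derivation of the possible values is the paper's: the lower bound from \cref{thm:main1}, the upper bound $(k/2+1)d$, parity for $k=2$, \cref{thm:k2d-4} for $k=4$, and the maximal-growth equivalence for $k=6$. The genuine difference is in realizability.

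\textbf{Realizability: your route versus the paper's.} The paper cites \cref{lem:possible-plov}, i.e.\ the full formula $\plov=\sum\mu_i^2$ for abelian varieties from \cite{LOZ25,Hu-GK-AV}, applied to $\mu=(2,1,1,0)$, $(2,2,0,0)$ and $(3,1,0,0)$. You instead only produce examples with the right $k$. For $k=0,4,6$ you note that the value is then forced by the bounds just proved. For $k=2$ you use additivity of $\plov$ under products together with the case $d=2$, $k=2$; that case is already settled by \cref{thm:main1}, so Artin--Van den Bergh is not needed. Your route is more self-contained. The same ingredients, namely additivity and $\plov(J_m\text{ on }E^m)=m^2$ from the maximal-growth case of \cref{thm:main1}, in fact recover the realizability half of \cref{lem:possible-plov} in general. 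What the paper's citation buys is brevity.

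\textbf{Two points to tighten.}
\begin{enumerate}
\item For $f=g\times g'$ and $H=p^*H_X+q^*H_{X'}$, the polynomial $\Phi_{f,H}$ does not literally factor. It equals $\sum_{|S|=d}\Phi_{g,H_X}(\bx_S)\,\Phi_{g',H_{X'}}(\bx_{S^c})$. Degrees still add, because the top parts are nonnegative and nonzero. Your volume route is cleaner: $\Delta_n(f,H)^{d+d'}=\binom{d+d'}{d}\Delta_n(g,H_X)^d\,\Delta_n(g',H_{X'})^{d'}$.
\item The symmetric-square containment only gives $k\ge 2m-2$. The reverse inequality follows, for instance, because $\deg_1((g\times g')^n)$ is a positive combination of $\deg_1(g^n)$ and $\deg_1(g'^n)$. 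Hence $k(g\times g')=\max\{k(g),k(g')\}$, and $k\le 2\dim-2$ holds on each factor.
\end{enumerate}
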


To conclude the introduction, we propose a gap conjecture concerning $\plov$ based on known results and lower-dimensional examples.

\begin{conjecture}[Second Gap Principle]
\label{conj:SGP}
Let $X$ be a normal projective variety of dimension $d \ge 2$ over $\bk$, and let $f$ be an automorphism of $X$ such that $\deg_1(f^n) \asymp n^k$ as $n\to\infty$.
Then
\[
k=2d-4 \iff \plov(f)=(d-1)^2+1;
\]
equivalently,
\[
\plov(f)\notin \bigl((d-1)^2+1,\, d^2\bigr).
\]
\end{conjecture}

\begin{remark}
\label{rmk:suspected-gap}
The equivalence between the two formulations of the conjecture follows from \cref{thm:main1} and the \cref{eq:plov-upper-bound}.
Note that when $k=2d-4$, \cref{thm:main1} already gives
\[
\plov(f)\ge (d-1)^2+1.
\]
So, \cref{conj:SGP} is also equivalent to the following (optimal) upper bound:
\[
k=2d-4 \implies \plov(f)\le (d-1)^2+1.
\]
Our \cref{cor:gap} has confirmed that the Second Gap Principle (\cref{conj:SGP}) holds for $d\le 7$.
However, we believe that to prove \cref{conj:SGP} in dimension $d\ge 8$ or to obtain sharper upper bounds for the polynomial volume growth when $k\le 2d-4$, it is inevitable to invoke more algebro-geometric tools.

More generally, for dimensions $d\ge 5$, an explicit description of all possible values of $\plov(f)$ remains wide open. See \cref{final question} for related discussions. 
\end{remark}

\subsection*{Acknowledgments}
The first author gratefully acknowledges the hospitality of the Fields Institute during Winter 2026, where part of this work was carried out. C.~Jiang is a member of the Key Laboratory of Mathematics for Nonlinear Sciences, Fudan University.

\section{Restricted partitions and weighted incidence matrices}
\label{sec:LA}

In this section, we recall restricted partitions and their associated weighted incidence matrices $A_{k,d,n}$.
In \Cref{subsec:partition-Akdn}, we introduce a natural block structure of $A_{k,d,n}$ based on the recurrence relation of restricted partition numbers.

\subsection{Restricted partitions}
\label{subsec:partition}

Fix positive integers $k,d\in \bZ_{>0}$.
For any $n\in \bZ$, a $d$-tuple $\lambda = (\lambda_1,\ldots,\lambda_d)$ of nonnegative integers is called a {\it restricted partition} of $n$ into at most $d$ parts with each part at most $k$, if
\[
|\lambda| \coloneqq \sum_{i=1}^d \lambda_i = n \quad \text{and} \quad k\ge \lambda_1\ge \lambda_2\ge \cdots \ge \lambda_d \ge 0.
\]
Occasionally, $\lambda$ is regarded as a vector in $\bN^d$ or $\bR^d$.
We denote by $P(k, d, n)$ the set of all such restricted partitions $\lambda$, and by $p(k,d,n)$ the cardinality of $P(k,d,n)$.
Note that if $n>dk$ or $n<0$, then $P(k, d, n)=\emptyset$ and 
$p(k,d,n)=0$.

Throughout, we adopt the \textit{lexicographic order} on restricted partitions, defined as follows. For two restricted partitions $\lambda, \mu \in \bN^d$, we write
\[\lambda \prec \mu,\]
if the leftmost nonzero entry of $\lambda-\mu$ is negative.
We also write $\lambda \preceq \mu$ if $\lambda = \mu$ or $\lambda \prec \mu$, and $\lambda \succ \mu$ if $\mu \prec \lambda$, and so on.
For each fixed $n$, we always list the elements of $P(k,d,n)$ in descending lexicographic order: for instance,
\[
P(4,3,6) = \{(4,2,0), (4,1,1), (3,3,0), (3,2,1), (2,2,2)\}.
\]

It is well known that the restricted partition numbers satisfy the following properties:
\begin{itemize}
\item Symmetry in $k$ and $d$: $p(k,d,n) = p(d,k,n)$;
\item Symmetry in $n$: $p(k,d,n) = p(k,d,dk-n)$;
\item Recurrence relation: $p(k,d,n) = p(k,d-1,n) + p(k-1,d,n-d)$;
\item Unimodality: 
\begin{align*}
\begin{cases}
p(k,d,n) \le p(k,d,n+1), &\text{if }  
n < {dk}/{2};\\
p(k,d,n) \ge p(k,d,n+1), &\text{if }  n \geq {dk}/{2}. 
\end{cases}
\end{align*}
\end{itemize}
See \cite[Chapter~3]{Andrews98} and \cite[Section~1.7]{Stanley97} for more details on restricted partitions.

It is often convenient to keep track of the multiplicity of each part.
Thus, we may alternatively write a restricted partition $\lambda=(\lambda_1,\ldots,\lambda_d)$ of $n$ in the form
\[
\lambda = (k^{e_k},\ldots,1^{e_1},0^{e_0}),
\]
where $e_i$ is the multiplicity of the part $i$.
Note that 
\[
e_i\in \bN, \ \sum_{i=0}^k e_i = d, \text{ and } \sum_{i=0}^k ie_i = n.
\]

\subsection{Weighted incidence matrices associated with restricted partitions}
\label{subsec:matrix}

Fix positive integers $k,d$.
Let $\mu= (k^{e_k},\ldots,1^{e_1},0^{e_0})\in P(k, d, n-1)$ be a restricted partition of $n-1\in \bN$.
For $ 0\le i\le k-1$, define
\[
\mu(i) \coloneqq (k^{e_k},\ldots,(i+1)^{e_{i+1}+1},i^{e_i-1},\ldots,0^{e_0}).
\]
Namely, $\mu(i)$ is obtained from $\mu$ by replacing one part equal to $i$ with $i+1$.
Then $\mu(i)\in P(k, d, n)$ is a restricted partition of $n$ whenever $e_i>0$.

\begin{definition}[Weighted incidence matrices]
\label{def:Akdn}
Let $n\in \bZ_{>0}$ be a positive integer.
For any $\mu\in P(k, d, n-1)$ and $\lambda\in P(k, d, n)$, we define
\[
a_{\mu, \lambda} \coloneqq
\begin{cases}
e_i, &\textrm{if } \mu= (k^{e_k},\ldots,1^{e_1},0^{e_0}) \textrm{ and } \lambda=\mu(i); \\
0, &\text{otherwise}.
\end{cases}
\]
Namely, $a_{\mu, \lambda}$ is the number of ways to obtain $\lambda$ from $\mu$ by increasing exactly one part by $1$ and then reordering if necessary.
We call
\[
A_{k, d, n} \coloneqq (a_{\mu, \lambda})_{\mu, \lambda} \in \bR^{p(k,d,n-1) \times p(k,d,n) }
\]
the {\it weighted incidence matrix} associated with $P(k, d, n-1)$ and $P(k, d, n)$.
\end{definition}

Below we give two examples of such matrices with $k=5$, $d=3$, and $n\in\{6,7\}$. These examples will also appear in Table~\ref{table:Akdn-block}.
\[
A_{5,3,6} = \begin{pmatrix}
2 & 0 & 0 & 0 & 0 & 0 \\
1 & 1 & 1 & 0 & 0 & 0 \\
0 & 1 & 0 & 1 & 1 & 0 \\
0 & 0 & 1 & 0 & 2 & 0 \\
0 & 0 & 0 & 0 & 2 & 1
\end{pmatrix},
\quad
A_{5,3,7} = \begin{pmatrix}
1 & 1 & 0 & 0 & 0 & 0 \\
1 & 0 & 1 & 1 & 0 & 0 \\
0 & 1 & 0 & 2 & 0 & 0 \\
0 & 0 & 2 & 0 & 1 & 0 \\
0 & 0 & 0 & 1 & 1 & 1 \\
0 & 0 & 0 & 0 & 0 & 3
\end{pmatrix}.
\]

The weighted incidence matrices $A_{k, d, n}$ defined in \cref{def:Akdn} are all of full rank.

\begin{theorem}[{cf. \cite[Theorem~4.1(2)]{HJ25}}]
\label{thm:full-rank}
Fix positive integers $k$ and $d$.
Then for every positive integer $n$ with $1\leq n\leq dk$, the matrix $A_{k, d, n}$ has full rank. More precisely,
\begin{align*}
\rank(A_{k, d, n}) = \min\{p(k, d, n-1), p(k, d, n)\} 
=
\begin{cases}
p(k, d, n-1), &\text{if } 1\leq n\leq \lceil dk/2 \rceil; \\
p(k, d, n), &\text{if } \lfloor dk/2 \rfloor < n\leq dk.
\end{cases}
\end{align*}
\end{theorem}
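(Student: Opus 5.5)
Our approach is to realize $A_{k,d,n}$ as the matrix of a natural linear map between graded pieces of a polynomial ring and then use the hard Lefschetz / $\mathfrak{sl}_2$ structure of that ring. Let $R=\bR[y_1,\ldots,y_d]/(y_1^{k+1},\ldots,y_d^{k+1})$ and let $S=R^{\mathfrak S_d}$ be its subring of symmetric invariants. For $\lambda\in P(k,d,n)$ let $m_\lambda\in S_n$ be the monomial symmetric function, i.e. the sum of the distinct monomials $y^\alpha$ with $\alpha$ a permutation of $\lambda$. These $m_\lambda$ form a basis of $S_n$, so $\dim S_n=p(k,d,n)$. Let $e_1=y_1+\cdots+y_d$. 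The key observation is
\[
e_1\, m_\mu=\sum_{\lambda} c_{\mu\lambda}\, m_\lambda .
\]
Here $c_{\mu\lambda}$ counts the ways a fixed monomial of $m_\lambda$ arises as $y_j\cdot y^\alpha$ with $y^\alpha$ a monomial of $m_\mu$. This is the number of parts of $\lambda$ equal to $i+1$, namely $e_{i+1}+1$ computed in $\mu$; it is not $e_i$. So multiplication by $e_1$ has matrix $A_{k,d,n}$ only up to rescaling the bases. Concretely, rescale $m_\lambda$ to $\tilde m_\lambda=m_\lambda/N_\lambda$, where $N_\lambda$ is the number of distinct permutations of $\lambda$, i.e. use the orbit-average symmetrizations. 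Then the coefficients become the transposed weights. In any case the matrix of $e_1\colon S_{n-1}\to S_n$ equals $D_1A_{k,d,n}D_2$ (possibly transposed) with $D_1,D_2$ invertible diagonal. I would check this bookkeeping carefully, since rank is insensitive to such rescaling and transposition. The claim then reduces to: multiplication by $e_1$ on $S$ has maximal rank in each degree.

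For this I would invoke the hard Lefschetz property of $R$. $R$ is the cohomology ring of $(\mathbf{P}^k)^d$ with $e_1$ the class of an ample divisor. Alternatively, and purely algebraically, $R\cong V^{\otimes d}$ as $\mathfrak{sl}_2$-representations, where $V=\bR[y]/(y^{k+1})$ is the irreducible $(k+1)$-dimensional representation with raising operator multiplication by $y$. Multiplication by $e_1$ is the diagonal raising operator. By $\mathfrak{sl}_2$ theory, $e_1^{dk-2n}\colon R_n\to R_{dk-n}$ is an isomorphism for $n\le dk/2$. Hence $e_1\colon R_{n-1}\to R_n$ is injective for $n\le \lceil dk/2\rceil$ and surjective for $n>\lfloor dk/2\rfloor$; in the borderline odd case both hold.

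The $\mathfrak S_d$-action commutes with the $\mathfrak{sl}_2$-action, since the lowering operator $\sum_j \partial$-type operators is also symmetric. Therefore $S=R^{\mathfrak S_d}$ is an $\mathfrak{sl}_2$-subrepresentation, and the same Lefschetz statements hold on $S$. Concretely, injectivity restricts to the subspace $S$ directly. For surjectivity onto $S_n$, apply the Reynolds operator (averaging over $\mathfrak S_d$, valid over $\bR$), which commutes with $e_1$. This gives rank $\min\{p(k,d,n-1),p(k,d,n)\}$. The explicit case split in the statement then follows from the unimodality and symmetry of $p(k,d,n)$ recalled above.

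The main obstacle is the identification step: matching the weights $e_i$ exactly, up to diagonal scaling and transpose, with multiplication by $e_1$ in a suitably normalized basis. If the direct identification resists, I would instead use the dual map. In the dual basis, the transpose of $A_{k,d,n}$ is given by a symmetric lowering operator, and the argument above applies verbatim to it as well.
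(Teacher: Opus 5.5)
Your proof is correct. The paper gives no proof of this statement to compare against: it imports the result as a black box from \cite[Theorem~4.1(2)]{HJ25}.

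The identification you flagged as the main obstacle is in fact exact, with no residual diagonal scaling. Write $\omega\coloneqq y_1+\cdots+y_d$ for your $e_1$, renamed to avoid a clash with the multiplicities $e_i$. Let $\mathcal{R}$ be the Reynolds operator and $\tilde m_\mu=\mathcal{R}(y^\mu)$. Since $\omega$ is symmetric,
\[
\omega\,\tilde m_\mu=\mathcal{R}(\omega\, y^\mu)=\sum_{j=1}^d \mathcal{R}\bigl(y^{\mu+\be_j}\bigr)=\sum_{\substack{0\le i\le k-1\\ e_i>0}} e_i\,\tilde m_{\mu(i)} .
\]
The last equality holds because the $e_i$ indices $j$ with $\mu_j=i$ all give $\tilde m_{\mu(i)}$, while those with $\mu_j=k$ give $0$ in $R$. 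So $A_{k,d,n}$ is exactly the transpose of the matrix of $\omega\colon S_{n-1}\to S_n$ in the bases $\{\tilde m\}$. Equivalently, $N_{\mu(i)}/N_\mu=e_i/(e_{i+1}+1)$ converts your $c_{\mu\lambda}$ into $a_{\mu\lambda}$.

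The remaining steps are the classical Sylvester--Stanley--Proctor argument, and each is sound:
\begin{itemize}
\item $\mathfrak{sl}_2$ acts on $V^{\otimes d}$ with $\omega$ as the diagonal raising operator.
\item $\omega\colon R_{n-1}\to R_n$ is injective for $n\le\lceil dk/2\rceil$ and surjective for $n>\lfloor dk/2\rfloor$.
\item These descend to $S=R^{\mathfrak S_d}$: by restriction for injectivity, and via $\mathcal{R}$ for surjectivity.
\end{itemize}
You do not need unimodality as an input. Injectivity forces $p(k,d,n-1)\le p(k,d,n)$ and surjectivity forces the reverse, so the rank equals the minimum automatically, and unimodality comes out as a byproduct.

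On what this route buys relative to the paper's own machinery: an induction that uses only the block recursion of \cref{lemma:Akdn-decomposition} with \cref{lemma:nullity-lower-triangular}, tracking only the ranks of the two diagonal blocks, breaks down. The failure occurs in the window $\lceil d(k-1)/2\rceil<n\le\lceil dk/2\rceil$. There $A_{k-1,d,n}$ is past its own midpoint and need not have full row rank. For example, the square matrix $A_{5,3,7}$ has the $5\times 4$ block $A_{4,3,7}$. So a global input such as your Lefschetz argument is genuinely needed.

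Your viewpoint also makes the application transparent. By \eqref{eq:derivative}, the functional $y^\alpha\mapsto (L^{\alpha_1}H_X)\cdots(L^{\alpha_d}H_X)$ on $R_n$ vanishes on $\omega R_{n-1}$. Hence surjectivity of $\omega$ on $R$ itself for $n>dk/2$ already yields \cref{prop:vanishing-after-kd/2}, without passing to invariants.
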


\subsection{Recurrence relation of weighted incidence matrices}
\label{subsec:partition-Akdn}

We discuss the set-theoretic relations among the sets $P(k,d,n)$ as $k,d$, and $n$ vary.
Note that for any $k'\leq k$, the set $P(k', d, n)$ is a subset of $P(k, d, n)$.
There is also a natural bijection between the set $P(k,d-1,n-k)$ of restricted partitions of $n-k$ and the subset of $P(k,d,n)$ consisting of those restricted partitions of $n$ whose first part is equal to $k$:
\[
\begin{aligned}
\iota_k \colon P(k,d-1,n-k) &\xrightarrow[\hspace*{0.5cm}]{\ 1\text{-}1\ } \{\lambda\in P(k,d,n) \,:\, \lambda_1=k\},\\
\nu = (\nu_1,\ldots,\nu_{d-1}) &\longmapsto \iota_k(\nu) \coloneqq (k,\nu) = (k,\nu_1,\ldots,\nu_{d-1}).
\end{aligned}
\]
This yields the following decomposition: for $d\ge 2$,
\begin{align}
\label{eq:decomposition-by-iota}
P(k,d,n)=\iota_k(P(k,d-1,n-k))\sqcup P(k-1,d,n),
\end{align}
and hence the recurrence relation
\begin{align*}
p(k,d,n) & = p(k,d-1,n-k) + p(k-1,d,n). 
\end{align*}

Accordingly, the following lemma gives the recurrence relation of $A_{k,d,n}$.

\begin{lemma}\label{lemma:Akdn-decomposition}
Under the decomposition \eqref{eq:decomposition-by-iota} for $P(k, d, n)$, and the analogous decomposition for $P(k, d, n-1)$, the weighted incidence matrix
$A_{k,d,n}$ has the following block lower-triangular form
\[\hspace{-2.5cm}
A_{k,d,n} = 
\begin{pNiceArray}{cc}[first-row,last-col,columns-width=1.5cm]
\Hbrace{1}{p(k,d-1,n-k)} & \Hbrace{1}{p(k-1,d,n)} \\
~~~A_{k,d-1,n-k}~~~ & ~~~\mathbf{0}~~~ & \Vbrace{1}{p(k,d-1,n-k-1)} \\[2pt]
* & A_{k-1,d,n} & \Vbrace{1}{p(k-1,d,n-1)} \\
\end{pNiceArray}.
\]
\end{lemma}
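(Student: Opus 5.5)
We order rows (indexed by $P(k,d,n-1)$) and columns (indexed by $P(k,d,n)$) compatibly with \eqref{eq:decomposition-by-iota}: first the partitions with first part equal to $k$, in the image of $\iota_k$, then those in $P(k-1,d,\cdot)$. Since the lexicographic descending order lists all partitions with $\lambda_1=k$ before those with $\lambda_1\le k-1$, and $\iota_k$ preserves the lexicographic order, this matches the standard listing. We then examine the entries $a_{\mu,\lambda}$ blockwise according to whether $\mu_1=k$ and whether $\lambda_1=k$. Throughout we use that $\lambda=\mu(i)$ changes exactly one part, $i\mapsto i+1$, so $\lambda_1\ge\mu_1$.

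For the \textbf{upper-right block}, take $\mu_1=k$ and $\lambda_1\le k-1$. Then $\lambda_1\ge\mu_1=k$ is impossible, so $a_{\mu,\lambda}=0$. Hence this block is $\mathbf{0}$.

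For the \textbf{upper-left block}, take $\mu=(k,\nu')$ with $\nu'\in P(k,d-1,n-k-1)$ and $\lambda=(k,\nu)$ with $\nu\in P(k,d-1,n-k)$. Write $e_i$ and $e'_i$ for the multiplicities of $i$ in $\mu$ and $\nu'$. Then $e_i=e'_i$ for $i\le k-1$ and $e_k=e'_k+1$. Since $i\le k-1$ is forced for $\mu(i)$ to be defined, a part $k$ is never increased. Thus $\lambda=\mu(i)$ if and only if $\nu=\nu'(i)$ in $P(k,d-1,\cdot)$, with the same weight $e_i=e'_i$. So this block equals $A_{k,d-1,n-k}$, with the stated sizes.

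For the \textbf{lower-right block}, take $\mu,\lambda$ with first parts at most $k-1$. The relation $\lambda=\mu(i)$ then requires $i\le k-2$, and the multiplicity $e_i$ is the same whether $\mu$ is viewed in $P(k,d,n-1)$ or in $P(k-1,d,n-1)$. Conversely, every move in $A_{k-1,d,n}$ has $i\le k-2$. So this block equals $A_{k-1,d,n}$.

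The lower-left block $*$ is unconstrained; it records the moves $i=k-1\mapsto k$.

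The main point of care is the boundary cases. When $d=1$, which is excluded by $d\ge 2$, the block $A_{k,0,\cdot}$ does not make sense. When $k=1$, the block $A_{0,d,n}$ has rows and columns indexed by $P(0,d,\cdot)$, which is nonempty only for $n=0$. In every case, empty index sets simply give empty blocks, so the block form still holds. I expect no step to be a real obstacle; the argument is bookkeeping of multiplicities.
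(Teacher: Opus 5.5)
Your proof is correct and follows the same blockwise case analysis as the paper. The top-right block vanishes because the first part cannot decrease under $\mu\mapsto\mu(i)$, and the two diagonal blocks are identified with $A_{k,d-1,n-k}$ and $A_{k-1,d,n}$ by matching the moves $\mu(i)$ and their weights $e_i$; your extra bookkeeping ($\mu(i)=(k,\nu'(i))$ with $e_i=e'_i$ for $i\le k-1$, and compatibility of the descending lexicographic listing with the decomposition) makes explicit what the paper leaves implicit.
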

\begin{proof}
Let $\mu\in P(k,d,n-1)$ and $\lambda\in P(k,d,n)$ be arbitrary restricted partitions of $n-1$ and $n$, respectively.

First, suppose that
\[
\mu\in \iota_k(P(k,d-1,n-k-1)) \text{~~and~~} \lambda\in \iota_k(P(k,d-1,n-k)).
\]
Then the $(\mu, \lambda)$-entry of $A_{k,d,n}$ coincides with the $(\iota_k^{-1}(\mu), \iota_k^{-1}(\lambda))$-entry of $A_{k,d-1,n-k}$. Since $\iota_k$ is a bijection, these $(\iota_k^{-1}(\mu), \iota_k^{-1}(\lambda))$-entries fill the entire matrix $A_{k,d-1,n-k}$, which gives the top-left block of $A_{k,d,n}$.

Next, suppose that
\[
\mu\in P(k-1,d,n-1) \text{~~and~~} \lambda\in P(k-1,d,n).
\]
Then the $(\mu, \lambda)$-entry of $A_{k,d,n}$ coincides with the $(\mu, \lambda)$-entry of $A_{k-1,d,n}$, which is the bottom-right block of $A_{k,d,n}$.

Finally, suppose that
\[
\mu\in \iota_k(P(k,d-1,n-k-1)) \text{~~and~~} \lambda\in P(k-1,d,n).
\]
Then by the definition of $\iota_k$, the first part of $\mu$ is $k$, whereas the first part of $\lambda$ is at most $k-1$.
Hence $\lambda\neq \mu(i)$ for every $0\leq i\leq k-1$. 
It follows that the corresponding $(\mu, \lambda)$-entry of $A_{k,d,n}$ is zero, which gives the top-right block $\mathbf{0}$ of $A_{k,d,n}$.
\end{proof}

\begin{remark}
To state the recurrence relation of $A_{k,d,n}$ in \cref{lemma:Akdn-decomposition} clearly, we have to consider $A_{k,d,n}$ for $n\leq 0$ or $n>dk$. When $n=0$ (resp., $n=dk+1$), we regard $A_{k,d,n}$ as a ``matrix'' of size $0\times 1$ (resp., $1\times 0$), which has full row rank (resp., column rank). When $n<0$ or $n>dk+1$, we think of $A_{k,d,n}$ as a ``matrix'' of size $0\times 0$, which has both full row rank and full column rank. In particular, the block form in \cref{lemma:Akdn-decomposition} degenerates to $(*, A_{k-1,d,n})$ or $\binom{A_{k,d-1,n-k}}{*}$ when $n\leq k$ or $n>d(k-1)$, respectively.
This will not affect our discussion, since we are only interested in the nullity of $A_{k,d,n}$ (see \cref{lemma:nullity-lower-triangular}).
\end{remark}

To illustrate \cref{lemma:Akdn-decomposition}, Table~\ref{table:Akdn-block} below gives an explicit example of the block lower-triangular form of $A_{6,4,12}$. 
The purple block corresponds to $A_{6,3,6}$, and the teal block corresponds to $A_{5,4,12}$. Moreover, the matrix $A_{5,3,6}$ (resp. $A_{5,3,7}$) appears as the bottom-right block of the purple block $A_{6,3,6}$ (resp. as the top-left block of the teal block $A_{5,4,12}$); these subblocks are indicated by dashed lines.

\begin{table}[htbp]
\footnotesize
\setlength{\tabcolsep}{5pt}
\caption{Block lower-triangular form of $A_{6,4,12}$}
\label{table:Akdn-block}

\begin{NiceTabular}{r|cccccccccccccccccc}
  & \cellcolor{purplefill}\rotatebox{90}{\tiny\color{purpletext}(6,6,0,0)} & \cellcolor{purplefill}\rotatebox{90}{\tiny\color{purpletext}(6,5,1,0)} & \cellcolor{purplefill}\rotatebox{90}{\tiny\color{purpletext}(6,4,2,0)} & \cellcolor{purplefill}\rotatebox{90}{\tiny\color{purpletext}(6,4,1,1)} & \cellcolor{purplefill}\rotatebox{90}{\tiny\color{purpletext}(6,3,3,0)} & \cellcolor{purplefill}\rotatebox{90}{\tiny\color{purpletext}(6,3,2,1)} & \cellcolor{purplefill}\rotatebox{90}{\tiny\color{purpletext}(6,2,2,2)} & \cellcolor{tealfill}\rotatebox{90}{\tiny\color{tealtext}(5,5,2,0)} & \cellcolor{tealfill}\rotatebox{90}{\tiny\color{tealtext}(5,5,1,1)} & \cellcolor{tealfill}\rotatebox{90}{\tiny\color{tealtext}(5,4,3,0)} & \cellcolor{tealfill}\rotatebox{90}{\tiny\color{tealtext}(5,4,2,1)} & \cellcolor{tealfill}\rotatebox{90}{\tiny\color{tealtext}(5,3,3,1)} & \cellcolor{tealfill}\rotatebox{90}{\tiny\color{tealtext}(5,3,2,2)} & \cellcolor{tealfill}\rotatebox{90}{\tiny\color{tealtext}(4,4,4,0)} & \cellcolor{tealfill}\rotatebox{90}{\tiny\color{tealtext}(4,4,3,1)} & \cellcolor{tealfill}\rotatebox{90}{\tiny\color{tealtext}(4,4,2,2)} & \cellcolor{tealfill}\rotatebox{90}{\tiny\color{tealtext}(4,3,3,2)} & \cellcolor{tealfill}\rotatebox{90}{\tiny\color{tealtext}(3,3,3,3)} \\
\hline
  \cellcolor{purplefill}\tiny\color{purpletext}(6,5,0,0) & \pIIv{1} & \pIIv{2} & \pII{$\cdot$} & \pII{$\cdot$} & \pII{$\cdot$} & \pII{$\cdot$} & \pII{$\cdot$} & \zdot & \zdot & \zdot & \zdot & \zdot & \zdot & \zdot & \zdot & \zdot & \zdot & \zdot \\
  \cellcolor{purplefill}\tiny\color{purpletext}(6,4,1,0) & \pII{$\cdot$} & \pIIv{1} & \pIIv{1} & \pIIv{1} & \pII{$\cdot$} & \pII{$\cdot$} & \pII{$\cdot$} & \zdot & \zdot & \zdot & \zdot & \zdot & \zdot & \zdot & \zdot & \zdot & \zdot & \zdot \\
  \cellcolor{purplefill}\tiny\color{purpletext}(6,3,2,0) & \pII{$\cdot$} & \pII{$\cdot$} & \pIIv{1} & \pII{$\cdot$} & \pIIv{1} & \pIIv{1} & \pII{$\cdot$} & \zdot & \zdot & \zdot & \zdot & \zdot & \zdot & \zdot & \zdot & \zdot & \zdot & \zdot \\
  \cellcolor{purplefill}\tiny\color{purpletext}(6,3,1,1) & \pII{$\cdot$} & \pII{$\cdot$} & \pII{$\cdot$} & \pIIv{1} & \pII{$\cdot$} & \pIIv{2} & \pII{$\cdot$} & \zdot & \zdot & \zdot & \zdot & \zdot & \zdot & \zdot & \zdot & \zdot & \zdot & \zdot \\
  \cellcolor{purplefill}\tiny\color{purpletext}(6,2,2,1) & \pII{$\cdot$} & \pII{$\cdot$} & \pII{$\cdot$} & \pII{$\cdot$} & \pII{$\cdot$} & \pIIv{2} & \pIIv{1} & \zdot & \zdot & \zdot & \zdot & \zdot & \zdot & \zdot & \zdot & \zdot & \zdot & \zdot \\
  \cellcolor{tealfill}\tiny\color{tealtext}(5,5,1,0) & \aC{$\cdot$} & \aCv{2} & \aC{$\cdot$} & \aC{$\cdot$} & \aC{$\cdot$} & \aC{$\cdot$} & \aC{$\cdot$} & \tIv{1} & \tIv{1} & \tI{$\cdot$} & \tI{$\cdot$} & \tI{$\cdot$} & \tI{$\cdot$} & \tI{$\cdot$} & \tI{$\cdot$} & \tI{$\cdot$} & \tI{$\cdot$} & \tI{$\cdot$} \\
  \cellcolor{tealfill}\tiny\color{tealtext}(5,4,2,0) & \aC{$\cdot$} & \aC{$\cdot$} & \aCv{1} & \aC{$\cdot$} & \aC{$\cdot$} & \aC{$\cdot$} & \aC{$\cdot$} & \tIv{1} & \tI{$\cdot$} & \tIv{1} & \tIv{1} & \tI{$\cdot$} & \tI{$\cdot$} & \tI{$\cdot$} & \tI{$\cdot$} & \tI{$\cdot$} & \tI{$\cdot$} & \tI{$\cdot$} \\
  \cellcolor{tealfill}\tiny\color{tealtext}(5,4,1,1) & \aC{$\cdot$} & \aC{$\cdot$} & \aC{$\cdot$} & \aCv{1} & \aC{$\cdot$} & \aC{$\cdot$} & \aC{$\cdot$} & \tI{$\cdot$} & \tIv{1} & \tI{$\cdot$} & \tIv{2} & \tI{$\cdot$} & \tI{$\cdot$} & \tI{$\cdot$} & \tI{$\cdot$} & \tI{$\cdot$} & \tI{$\cdot$} & \tI{$\cdot$} \\
  \cellcolor{tealfill}\tiny\color{tealtext}(5,3,3,0) & \aC{$\cdot$} & \aC{$\cdot$} & \aC{$\cdot$} & \aC{$\cdot$} & \aCv{1} & \aC{$\cdot$} & \aC{$\cdot$} & \tI{$\cdot$} & \tI{$\cdot$} & \tIv{2} & \tI{$\cdot$} & \tIv{1} & \tI{$\cdot$} & \tI{$\cdot$} & \tI{$\cdot$} & \tI{$\cdot$} & \tI{$\cdot$} & \tI{$\cdot$} \\
  \cellcolor{tealfill}\tiny\color{tealtext}(5,3,2,1) & \aC{$\cdot$} & \aC{$\cdot$} & \aC{$\cdot$} & \aC{$\cdot$} & \aC{$\cdot$} & \aCv{1} & \aC{$\cdot$} & \tI{$\cdot$} & \tI{$\cdot$} & \tI{$\cdot$} & \tIv{1} & \tIv{1} & \tIv{1} & \tI{$\cdot$} & \tI{$\cdot$} & \tI{$\cdot$} & \tI{$\cdot$} & \tI{$\cdot$} \\
  \cellcolor{tealfill}\tiny\color{tealtext}(5,2,2,2) & \aC{$\cdot$} & \aC{$\cdot$} & \aC{$\cdot$} & \aC{$\cdot$} & \aC{$\cdot$} & \aC{$\cdot$} & \aCv{1} & \tI{$\cdot$} & \tI{$\cdot$} & \tI{$\cdot$} & \tI{$\cdot$} & \tI{$\cdot$} & \tIv{3} & \tI{$\cdot$} & \tI{$\cdot$} & \tI{$\cdot$} & \tI{$\cdot$} & \tI{$\cdot$} \\
  \cellcolor{tealfill}\tiny\color{tealtext}(4,4,3,0) & \aC{$\cdot$} & \aC{$\cdot$} & \aC{$\cdot$} & \aC{$\cdot$} & \aC{$\cdot$} & \aC{$\cdot$} & \aC{$\cdot$} & \tI{$\cdot$} & \tI{$\cdot$} & \tIv{2} & \tI{$\cdot$} & \tI{$\cdot$} & \tI{$\cdot$} & \tIv{1} & \tIv{1} & \tI{$\cdot$} & \tI{$\cdot$} & \tI{$\cdot$} \\
  \cellcolor{tealfill}\tiny\color{tealtext}(4,4,2,1) & \aC{$\cdot$} & \aC{$\cdot$} & \aC{$\cdot$} & \aC{$\cdot$} & \aC{$\cdot$} & \aC{$\cdot$} & \aC{$\cdot$} & \tI{$\cdot$} & \tI{$\cdot$} & \tI{$\cdot$} & \tIv{2} & \tI{$\cdot$} & \tI{$\cdot$} & \tI{$\cdot$} & \tIv{1} & \tIv{1} & \tI{$\cdot$} & \tI{$\cdot$} \\
  \cellcolor{tealfill}\tiny\color{tealtext}(4,3,3,1) & \aC{$\cdot$} & \aC{$\cdot$} & \aC{$\cdot$} & \aC{$\cdot$} & \aC{$\cdot$} & \aC{$\cdot$} & \aC{$\cdot$} & \tI{$\cdot$} & \tI{$\cdot$} & \tI{$\cdot$} & \tI{$\cdot$} & \tIv{1} & \tI{$\cdot$} & \tI{$\cdot$} & \tIv{2} & \tI{$\cdot$} & \tIv{1} & \tI{$\cdot$} \\
  \cellcolor{tealfill}\tiny\color{tealtext}(4,3,2,2) & \aC{$\cdot$} & \aC{$\cdot$} & \aC{$\cdot$} & \aC{$\cdot$} & \aC{$\cdot$} & \aC{$\cdot$} & \aC{$\cdot$} & \tI{$\cdot$} & \tI{$\cdot$} & \tI{$\cdot$} & \tI{$\cdot$} & \tI{$\cdot$} & \tIv{1} & \tI{$\cdot$} & \tI{$\cdot$} & \tIv{1} & \tIv{2} & \tI{$\cdot$} \\
  \cellcolor{tealfill}\tiny\color{tealtext}(3,3,3,2) & \aC{$\cdot$} & \aC{$\cdot$} & \aC{$\cdot$} & \aC{$\cdot$} & \aC{$\cdot$} & \aC{$\cdot$} & \aC{$\cdot$} & \tI{$\cdot$} & \tI{$\cdot$} & \tI{$\cdot$} & \tI{$\cdot$} & \tI{$\cdot$} & \tI{$\cdot$} & \tI{$\cdot$} & \tI{$\cdot$} & \tI{$\cdot$} & \tIv{3} & \tIv{1} \\

\CodeAfter
  \tikz \draw[dashed, line width=0.6pt]
    ([xshift=-5.8pt,yshift=3.8pt]2-3.north west)
    rectangle
    ([xshift=5.8pt,yshift=-4pt]6-8.south east);

  \tikz \draw[dashed, line width=0.6pt]
    ([xshift=-5.8pt,yshift=4pt]7-9.north west)
    rectangle
    ([xshift=5.8pt,yshift=-4pt]12-14.south east);
\end{NiceTabular}

\vspace{6pt}
\noindent
{\color{purpleval}\rule{7pt}{7pt}}\,$A_{6,3,6}$\;\;
{\color{graytext}\rule{7pt}{7pt}}\,$\mathbf{0}$\;\;
{\color{amberval}\rule{7pt}{7pt}}\,$*$\;\;
{\color{tealval}\rule{7pt}{7pt}}\,$A_{5,4,12}$
\end{table}

\section{Polynomial volume growth and intersection numbers}
\label{sec:AG}

Throughout this section, we work over an algebraically closed field $\bk$ of arbitrary characteristic.
Let $X$ be a normal projective variety of dimension $d$, and let $H_X$ be an ample divisor on $X$.
Let $f$ be an automorphism of $X$ of zero entropy, and let $k\geq 0$ be the nonnegative integer such that
\[
\deg_1(f^n) \coloneqq (f^n)^*H_X \cdot H_X^{d-1} \asymp n^k, \quad \text{as } n\to\infty.
\]
It is known that $k$ is even by \cite[Lemma~6.12]{Keeler00} and satisfies $k\leq 2d-2$ by \cite[Theorem~1.5]{HJ25}; see also \cite[Theorem~1.1]{DLOZ22}.
Equivalently, the action of $f^*$ on the real N\'eron--Severi space $\N^1(X)_\bR$ is quasi-unipotent, and the maximum size of the Jordan blocks in the Jordan canonical form of $f^*|_{\N^1(X)_\bR}$ is $k+1$.

In the study of the polynomial volume growth and the degree growth of a zero-entropy automorphism $f$, it is harmless to replace $f$ by an arbitrary iterate $f^n$ with $n\in \bZ\setminus \{0\}$; see, e.g., \cite[Lemma~2.6]{Hu-GK-AV}.
We henceforth make the following assumption throughout \Cref{sec:AG}.

\begin{assumption}\label{assum:XfN}
Let $X$ be a normal projective variety of dimension $d\ge 2$ over $\bk$, and let $H_X$ be an ample divisor on $X$.
Let $f$ be an automorphism of $X$ such that 
\begin{equation}
\label{eq:f=1+N}
f^*|_{\N^1(X)_\bR} = \id + N,
\end{equation}
where $N$ is nilpotent with nilpotency index $k+1$, i.e., $N^{k+1} = 0$ but $N^k \neq 0$, and $k=2r$ is a positive even integer with $k\le 2d-2$.
\end{assumption}

\subsection{Polynomial volume growth and formal log-monodromy}
\label{subsec:plov}

The interpretation \eqref{eq:f=1+N} plays a crucial role in previous studies of the polynomial volume growth of $f$.
Indeed, using this interpretation, the function $n\mapsto \Vol(\Gamma_n)$ is a polynomial in $n$, whose degree is exactly $\plov(f)$; see \cite[Lemma~2.16]{LOZ25}.
In the proof of \cite[Theorem~1.1]{HJ25}, the authors also rely on this interpretation and reduce the problem of bounding $\plov(f)$ from above to proving the vanishing of the intersection numbers
$N^{\lambda_1}H_X \cdots N^{\lambda_d} H_X$
for $\lambda \in P(k,d,n)$ with $n>dk/2$.
By an inductive argument, they show that these intersection numbers satisfy certain homogeneous linear equations encoded by the weighted incidence matrices $A_{k,d,n}$ defined in \cref{def:Akdn}.
For the purpose of deriving lower bounds for $\plov(f)$, however, this interpretation has the disadvantage that these homogeneous linear equations make sense only when $n>dk/2$, besides the potential cancellation.

To overcome this issue, we introduce the ``formal log-monodromy'' operator $L$ that yields genuine homogeneous linear equations on the intersection numbers $w_\lambda$ for all $\lambda \in P(k,d,n)$ and all $1\leq n\leq dk$ (see \cref{lemma:equations-v-lambda}), and moreover, circumvents the potential cancellation (see \cref{thm:plov=d+max,rmk:cancellation}).
We define the operator $L$ by
\begin{equation}
\label{eq:L=logf}
L \coloneqq \log(f^*|_{\N^1(X)_\bR}) = \sum_{i=1}^{k} \frac{(-1)^{i+1}}{i} N^i.
\end{equation}
Then $L$ is also a nilpotent endomorphism of $\N^1(X)_\bR$ with nilpotency index $k+1$, and we have
\[
f^*|_{\N^1(X)_\bR} = \exp(L) = \sum_{i=0}^{k} \frac{L^i}{i!} \quad \text{and} \quad N = \sum_{i=1}^{k} \frac{L^i}{i!}.
\]
We will see in \Cref{subsec:intersection-numbers} that $L$ keeps many good properties of $N$.

Recall that by the projection formula, the volume $ \Vol(\Gamma_n)$ of $\Gamma_n$ against the ample divisor $\sum_{i=1}^{n}\pr_i^*H_X$ on $X^n$ can be expressed as the top self-intersection of the ample divisor $\Delta_n(f, H_X)$ on $X$, defined by
\[
\Delta_n(f, H_X) \coloneqq \sum_{m=0}^{n-1} (f^m)^*H_X.
\]
Now using the interpretation \eqref{eq:L=logf}, we can express $\Delta_n(f, H_X)$ as a polynomial function in $n$ with coefficients in $\N^1(X)_\bR$ as follows:
\[
\Delta_n(f, H_X) = \sum_{m=0}^{n-1} \exp(mL) H_X = \sum_{m=0}^{n-1} \, \sum_{i=0}^{k} \frac{m^i}{i!} L^i H_X = \sum_{i=0}^{k} \frac{S_i(n-1)}{i!} L^i H_X,
\]
where
\[
S_i(n-1) = \sum_{m=0}^{n-1} m^i = \frac{B_{i+1}(n) - B_{i+1}(0)}{i+1}.
\]
Note that the $(i+1)$-th Bernoulli polynomial $B_{i+1}(n)$ is monic of degree $i+1$. Then it is clear that the intersection number $\Delta_n(f, H_X)^d$ is a polynomial in $n$ with intersection numbers of $\{L^i H_X\}_{0\leq i\leq k}$ as coefficients. 

The following interpretation of $\plov$ is essential in the previous works \cite{Keeler00,LOZ25,Hu-GK-AV,HJ25}.
See also \cite[Proof of Lemma~6.13]{Keeler00}.

\begin{lemma}[{cf.~\cite[Lemma~2.16]{LOZ25}}]
\label{lemma:plov}
Assume the setting and the notation of \cref{assum:XfN}.
Then $\plov(f)$ is equal to the degree of the polynomial $\Delta_n(f, H_X)^d$ in $n$.
\end{lemma}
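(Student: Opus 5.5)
The plan is to compare $\Vol(\Gamma_n)$ directly with the top self-intersection $\Delta_n(f,H_X)^d$, and then use that the latter is a polynomial in $n$ with nonnegative leading behaviour.

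\emph{Step 1: identify the volume with a polynomial.} By definition, $\Vol(\Gamma_n)$ is the degree of $\Gamma_n$ with respect to the ample divisor $\sum_{i=1}^n \pr_i^*H_X$ on $X^n$. Let $\phi_n\colon X\to X^n$ be the closed embedding $x\mapsto (x,f(x),\ldots,f^{n-1}(x))$, whose image is $\Gamma_n$. Since $\phi_n$ is birational onto its image, the projection formula gives
\[
\Vol(\Gamma_n)=\Bigl(\phi_n^*\sum_{i=1}^{n}\pr_i^*H_X\Bigr)^d=\Bigl(\sum_{m=0}^{n-1}(f^m)^*H_X\Bigr)^d=\Delta_n(f,H_X)^d.
\]
Under \cref{assum:XfN}, the computation preceding the lemma expresses $\Delta_n(f,H_X)$ as $\sum_{i=0}^k \frac{S_i(n-1)}{i!}L^iH_X$. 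Each $S_i(n-1)$ is a polynomial in $n$ of degree $i+1$. Expanding the $d$-fold intersection by multilinearity therefore shows that $P(n):=\Delta_n(f,H_X)^d$ is a polynomial in $n$ with real coefficients, which are intersection numbers of the classes $L^{i}H_X$ weighted by products of Bernoulli-type coefficients.

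\emph{Step 2: degree equals growth exponent.} Since $\Delta_n(f,H_X)$ is ample for $n\ge1$ (a sum of ample divisors, as $f$ is an automorphism), we have $P(n)>0$ for all $n\ge 1$. In particular $P$ is not the zero polynomial, so write $P(n)=c\,n^D+O(n^{D-1})$ with $c\neq0$. Positivity for all large $n$ forces $c>0$. Then
\[
\log \Vol(\Gamma_n)=D\log n+\log c+o(1),
\]
so $\limsup_{n\to\infty}\log\Vol(\Gamma_n)/\log n=D$. Hence $\plov(f)=\deg P$.

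\emph{Main obstacle.} The only delicate point is Step 1: justifying that the volume of $\Gamma_n$ is computed by pulling back to $X$. This requires $\phi_n$ to be an isomorphism onto $\Gamma_n$, which holds because $\pr_1\circ\phi_n=\id$, and it requires intersection theory for Cartier divisors on the possibly singular normal variety $X$. Both are standard: intersection numbers of Cartier divisors satisfy the projection formula for proper birational maps. No cancellation issues arise, because we only use the nonvanishing and positivity of $P(n)$, not any individual coefficient.
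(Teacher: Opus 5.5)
Your proposal is correct and follows the route the paper indicates just before the lemma, which is also the content of the cited \cite[Lemma~2.16]{LOZ25}. The projection formula for the closed embedding $x\mapsto(x,f(x),\ldots,f^{n-1}(x))$ identifies $\Vol(\Gamma_n)$ with $\Delta_n(f,H_X)^d$; the expansion $(f^m)^*H_X=\exp(mL)H_X$ makes this a polynomial in $n$; and because it is positive for every $n\ge 1$, its leading coefficient is positive, so its degree equals the $\limsup$ defining $\plov(f)$.
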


We recall the following results on the upper bounds for $\plov(f)$ and the first-degree growth rate $k$ of zero-entropy automorphisms $f$.
Note that combining them yields the optimal upper bound $\plov(f)\le d^2$.

\begin{theorem}[{see \cite[Theorems~1.1 and 1.5]{HJ25}}]
\label{thm:upper-bounds}
Let $X$ be a normal projective variety of dimension $d$ over $\bk$, and let $f$ be an automorphism of $X$ such that $\deg_1(f^n) \asymp n^k$ as $n\to\infty$.
Then we have
\[
\plov(f)\le (k/2+1)d \quad \text{and} \quad k\le 2d-2.
\]
\end{theorem}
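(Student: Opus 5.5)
The plan is to prove the upper bound $\plov(f)\le (k/2+1)d$ by a degree count combined with the linear algebra of \Cref{sec:LA}. I would then obtain $k\le 2d-2$ by playing that vanishing against one non-vanishing intersection number. First, replacing $f$ by an iterate, I place myself in \cref{assum:XfN}. Expanding $\Delta_n(f,H_X)=\sum_{i=0}^k \frac{S_i(n-1)}{i!}L^iH_X$ and using that $S_i(n-1)$ has degree $i+1$ in $n$, the $d$-fold self-intersection becomes a sum over tuples $(i_1,\ldots,i_d)$. The term for a tuple has degree $d+\sum_j i_j$ in $n$, and its coefficient involves $w_\lambda\coloneqq L^{\lambda_1}H_X\cdots L^{\lambda_d}H_X$, where $\lambda$ is the sorted tuple. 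By \cref{lemma:plov}, it therefore suffices to show that $w_\lambda=0$ for every $\lambda\in P(k,d,n)$ with $n>dk/2$.

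To get linear relations among the $w_\lambda$, I use that $f$ is an automorphism, so $(f^{t})^*$ preserves the intersection form: $\exp(tL)D_1\cdots\exp(tL)D_d=D_1\cdots D_d$ for all $t\in\bZ$. Both sides are polynomials in $t$, so the identity holds for all real $t$. Differentiating at $t=0$ shows that $L$ acts as a derivation killing the form:
\[
\sum_{j=1}^d D_1\cdots (LD_j)\cdots D_d=0 .
\]
I apply this with $D_j=L^{\mu_j}H_X$ for $\mu=(k^{e_k},\ldots,0^{e_0})\in P(k,d,n-1)$. Since $L^{k+1}=0$, the parts equal to $k$ contribute nothing, and by symmetry of the intersection product the relation reads $\sum_{i=0}^{k-1} e_i\, w_{\mu(i)}=0$. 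In other words, the vector $(w_\lambda)_{\lambda\in P(k,d,n)}$ lies in the nullspace of $A_{k,d,n}$. For $n>\lfloor dk/2\rfloor$, \cref{thm:full-rank} says $A_{k,d,n}$ has full column rank, so all these $w_\lambda$ vanish, which gives $\plov(f)\le d+dk/2$.

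For $k\le 2d-2$, I will exhibit a non-vanishing $w_\lambda$ whose weight exceeds $dk/2$ whenever $k>2d-2$. Suppose $k\ge 2d$, so that $\kappa\coloneqq(k,k-2,\ldots,k-2d+2)$ has nonnegative parts. Its weight is $|\kappa|=d(k-d+1)$, which is greater than $dk/2$ exactly when $k>2d-2$. It therefore suffices to prove
\[
L^kH_X\cdot L^{k-2}H_X\cdots L^{k-2d+2}H_X\neq 0,
\]
contradicting the vanishing just established. This non-vanishing is the main obstacle, a geometric positivity statement. I would first try to show that $L^kH_X=\lim_n k!\,(f^n)^*H_X/n^k$ is a nonzero nef class, with $L^kH_X\cdot H_X^{d-1}>0$ because the degree grows exactly like $n^k$. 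I would then proceed inductively: intersect with successive nef limit classes, extracting the leading coefficients of $(f^n)^*H_X$ restricted to the previous intersection cycle. I expect Hodge index / Khovanskii--Teissier type inequalities to show that the drop in growth exponent between consecutive factors is exactly $2$ and never more. Equivalently, in Jordan-block terms, pairing $L^{j}H_X$ against $L^{j}H_X$ in the form $(\cdot)(\cdot)H_X^{d-2}$ is controlled by the fact that odd powers are isotropic. This step is where I expect the real difficulty. If the direct induction fails, I would fall back on the argument of \cite{DLOZ22} for the bound $k\le 2d-2$, adapted to arbitrary characteristic.
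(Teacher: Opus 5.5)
Your proof of $\plov(f)\le (k/2+1)d$ is correct, and it is exactly what the paper's $L$-machinery gives. Your derivation identity is \cref{lemma:equations-v-lambda}; combined with \cref{thm:full-rank} it yields \cref{prop:vanishing-after-kd/2}, and \cref{lemma:plov} then bounds the degree of $\Delta_n(f,H_X)^d$ in $n$ by $d+dk/2$. The cited proof in \cite{HJ25} runs the same scheme with $N$, where the linear relations are only obtained, by induction, for $n>dk/2$. Your strategy for $k\le 2d-2$ is also the right one: it is the comparison behind the boundedness assertion of \cref{prop:thm3.2}. The gap is that the non-vanishing you need is never proved. That non-vanishing is the entire geometric content of \cite[Theorem~1.5]{HJ25}. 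You only describe what you expect, and falling back on \cite{DLOZ22}, whose argument is over $\bC$, outsources precisely this step.

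Moreover, the mechanism you describe does not work as stated. Run the natural induction: set $M_0=[X]$, let $a_{j+1}$ be the largest $s$ such that $M_j\cdot L^sH_X$ is not weakly numerically trivial, and put $M_{j+1}=M_j\cdot L^{a_{j+1}}H_X$. Then $a_{j+1}\le a_j$, but the exponent need not drop at every step. For example, take $X=E^4$ for an elliptic curve $E$, with $f$ having two Jordan blocks of size $2$ on $H^1$ (\cref{lem:possible-plov} with $\mu=(2,2,0,0)$). Then $k=2$ and $\plov(f)=8$, so $w_{(2,2,0,0)}\neq 0$ by \cref{thm:plov=d+max}, and hence $a_2=a_1$. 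This is why \cref{prop:thm3.2} allows multiplicities $t_j\ge 1$. No general inductive lemma will therefore produce the specific product $L^kH_X\cdot L^{k-2}H_X\cdots L^{k-2d+2}H_X$. What you need instead is $a_j-a_{j+1}\in\{0,2\}$. Evenness of the drop is the easy part, by a sign argument for $M_j\cdot (f^n)^*H_X\cdot H_X^{d-j-1}$ as $n\to\pm\infty$. The real content is the bound $a_{j+1}\ge a_j-2$. Granting it, $a_j\ge k-2(j-1)$, so $w_a>0$ with $|a|\ge d(k-d+1)>dk/2$ whenever $k>2d-2$, and your weight count finishes. Equivalently, $w_{\kappa(\bt)}>0$ with every $t_j\ge 1$, which already forces $r+1\le \sum_{j=0}^r t_j=d$. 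Proving $a_{j+1}\ge a_j-2$ requires a Hodge-index-type argument on the cycles $M_j$, which are weakly $f$-invariant. This is carried out in arbitrary characteristic in \cite[Theorem~3.2]{HJ25} using weak numerical equivalence. Without it, the second half of your proposal is a plan rather than a proof.
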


As a useful consequence of \cref{lemma:plov}, Lin, Oguiso, and Zhang proved that the polynomial volume growth $\plov(f)$ has the same parity as the dimension of $X$; see \cite[Corollary~2.20]{LOZ25}.
This can be viewed as the \textit{First Gap Principle} for $\plov$.
We also refer to it as the \textit{parity principle} for $\plov$.

\begin{proposition}[Parity principle]
\label{prop:same-parity}
One has that $\plov(f) \equiv d \pmod 2$.
\end{proposition}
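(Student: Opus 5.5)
Under \cref{assum:XfN} the plan is to compute $\Delta_n(f,H_X)^d$ modulo $2$ as a polynomial in $n$, using a symmetry that reverses time. By \cref{lemma:plov}, $\plov(f)=\deg_n P(n)$, where $P(n)\coloneqq\Delta_n(f,H_X)^d$. It therefore suffices to show that $P$ is either even or odd about some center, with the parity of $d$. The natural involution is $m\mapsto n-1-m$ on the summation range. Define
\[
Q(t)\coloneqq \Bigl(\sum_{m} \exp(mL)H_X\Bigr)^d,
\]
where the sum is interpreted via the polynomial identity
\[
\Delta_n=\sum_{i}\frac{S_i(n-1)}{i!}L^iH_X .
\]
The idea is to extend $n\mapsto P(n)$ to all integers through the Bernoulli-polynomial formula and to use the identity $S_i(-n-1)=(-1)^{i+1}S_i(n)$, valid for $i\ge 0$ with the natural polynomial extension.

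Concretely, I would substitute $n\mapsto -n$. The polynomial extension of $\sum_{m=0}^{n-1}g(m)$ satisfies
\[
\sum_{m=0}^{-n-1}g(m)=-\sum_{m=-n}^{-1}g(m).
\]
Hence
\[
\Delta_{-n}=-\sum_{m=1}^{n}\exp(-mL)H_X .
\]
Now apply the automorphism $(f^{-1})^{*}$-compatible relabeling. Intersection numbers are invariant under $f^*$, which acts on $\N^1$ as $\exp(L)$. Pulling back by $(f^{n+1})^*$, which preserves top intersection numbers, gives
\[
\Bigl(\sum_{m=1}^n\exp(-mL)H_X\Bigr)^d=\Bigl(\sum_{m=1}^{n}\exp((n+1-m)L)H_X\Bigr)^d .
\]
This equals $\bigl(\Delta_{n+1}-H_X\bigr)^d$, which is not quite $P$. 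A cleaner route is to pull back by $(f^{n})^*$ only, which gives
\[
\Bigl(\sum_{m'=0}^{n-1}\exp(m'L)H_X\Bigr)^d=P(n).
\]
Both sides are polynomial identities in $n$: invariance $(\exp(nL)D_1)\cdots(\exp(nL)D_d)=D_1\cdots D_d$ holds for all integers $n$, hence polynomially. Therefore
\[
P(-n)=(-1)^dP(n).
\]
So $P$ has parity $d$ as a polynomial, its leading degree $\deg P$ satisfies $(-1)^{\deg P}=(-1)^d$, and this gives $\plov(f)\equiv d\pmod 2$.

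Finally, the general case reduces to \cref{assum:XfN}. One replaces $f$ by an iterate so that $f^*$ is unipotent on $\N^1(X)_\bR$, which does not change $\plov$. If $k=0$, then $f^*$ is the identity on $\N^1$, so $P(n)=n^dH_X^d$ and the claim is immediate.

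The main care point is justifying the extension to negative $n$. The Bernoulli expression extends $\sum_{m=0}^{n-1}$ polynomially. For a polynomial $g$ one has $\sum_{m=0}^{-n-1}g(m)=-\sum_{m=-n}^{-1}g(m)$, and $\exp(mL)H_X$ is polynomial in $m$, so the substitution is valid. The step using invariance of top intersections under $\exp(nL)$ for all $n\in\bZ$, and then polynomially in $n$, is the one I would check most carefully.
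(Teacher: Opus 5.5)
Your argument is correct, and it takes a different route from the proof this paper supplies. Without the false starts, you prove the functional equation $P(-n)=(-1)^dP(n)$ for $P(n)=\Delta_n(f,H_X)^d$:
\begin{itemize}
\item The polynomial extension of the sum gives $\Delta_{-n}=-\sum_{m=1}^{n}(f^{-m})^*H_X$.
\item Pulling back by $(f^n)^*$ turns this into $-\Delta_n$ without changing top self-intersections.
\item The parity of $\deg P=\plov(f)$ then follows from \cref{lemma:plov}.
\end{itemize}

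The paper instead states the proposition with a citation to Lin--Oguiso--Zhang, and in \cref{rmk:cancellation} recovers it from \eqref{eq:plov=d+max even}. There, $\Phi_{f,H_X}^{\mathrm{top}}$ is nonnegative on $\bR^d$ (\cref{lem:in>0}), so it contains a square-type monomial. Hence $\plov(f)-d=|\lambda|$ for some $\lambda$ with all parts even and $w_\lambda\neq 0$. This chain also needs the Riemann-sum asymptotic (\cref{lemma:Riemann-sum}) to rule out cancellation in the leading coefficient.

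Your route is more elementary: it uses only \cref{lemma:plov} and the invariance of intersection numbers under automorphisms. It also says slightly more about the volume polynomial itself, namely that every monomial $n^j$ occurring in $\Vol(\Gamma_n)$ has $j\equiv d \pmod 2$. The paper's route is heavier, but there parity is only a by-product. What it really provides is the identification $\plov(f)=d+\deg\Phi_{f,H_X}$, together with a non-vanishing even-part $w_\lambda$ realizing it, and this is what drives \cref{thm:main1,thm:k2d-4}.

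When you write it up, make these small corrections:
\begin{itemize}
\item Delete the unused $Q(t)$ and the $(f^{n+1})^*$ detour.
\item Say explicitly that $P\neq 0$ (e.g.\ $P(1)=H_X^d>0$) before reading off the parity of its degree.
\item Drop the aside claiming $S_i(-n-1)=(-1)^{i+1}S_i(n)$ for all $i\ge 0$. With the paper's convention $S_0(x)=x+1$, it fails at $i=0$: the left side is $-n$ and the right side is $-(n+1)$. Your actual argument never uses it.
\end{itemize}
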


\begin{remark}
\label{rmk:odd-d}
When $d$ is odd and $k \equiv 2 \pmod 4$, combining \cref{prop:same-parity} with the inequality $\plov(f) \leq (k/2+1)d$ (see \cref{thm:upper-bounds}), we obtain that $\plov(f) \leq (k/2+1)d-1$, which improves \cite[Proposition~5.4]{HJ25}.
In particular, if $k=2d-4$, then one has
\[
\plov(f) \le d(d-2) + 2\lfloor d/2\rfloor.
\]
Our \cref{thm:k2d-4} gives a further improvement of this upper bound with $\lfloor d/2\rfloor$ replaced by $\lfloor d/4\rfloor$, while the Second Gap Principle (\cref{conj:SGP}) predicts that one can even replace $\lfloor d/2\rfloor$ by $1$.
\end{remark}

Recall that by \cite{LOZ25} (see also \cite{Hu-GK-AV} in positive characteristic), we now know all realizable values of $\plov$ for abelian varieties.

\begin{lemma}[{cf.~\cite[Theorem~1.4]{LOZ25} and \cite[Theorem~1.1]{Hu-GK-AV}}]
\label{lem:possible-plov}
For every pair of integers $d\in \bZ_{>0}$ and $k\in 2\bZ\cap [0,2d-2]$, 
the following two sets coincide:
\[
\Biggl\{\, \plov(f) \,:\,
\begin{aligned}
&  X \text{ is an abelian variety}, \, \dim X= d,\\
& f\in \Aut(X), \, \deg_1(f^n) \asymp n^k
\end{aligned}
\,\Biggr\}
=
\Biggl\{\,\sum_{i=1}^{d} \mu_i^2 \,:\, 
\begin{aligned}
& \mu \in P(d,d,d), \\
& \mu_1=k/2+1
\end{aligned}
\,\Biggr\}.
\]
\end{lemma}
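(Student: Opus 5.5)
The plan is to reduce everything to linear algebra of the action of $f$ on the first cohomology (or the tangent space) of the abelian variety. Write $f=t_a\circ g$ with $t_a$ a translation and $g$ a group automorphism. Translations act trivially on $\N^1(X)_\bR$, so $f^*=g^*$ there, and both $k$ and $\plov(f)$ depend only on $g$. Over $\bC$ I would work with the analytic representation $\rho_a(g)$ on $V=T_0X\cong\bC^d$. In arbitrary characteristic I would replace this by the $\ell$-adic representation on $V_\ell X$, which has rank $2d$, together with the Rosati-positive form attached to $H_X$; this is presumably what \cite{Hu-GK-AV} does. After replacing $f$ by an iterate, I may assume the relevant representation is unipotent, with Jordan block sizes forming a partition $\mu\in P(d,d,d)$. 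In the $\ell$-adic setting each block appears twice, by the Rosati pairing.

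\emph{Step 1: identify $k$.} Classes in $\N^1(X)_\bR$ are Hermitian forms on $V$, and $g^*$ acts on them by $H\mapsto \rho(g)^*H\rho(g)$; this is a sub-representation of $V^*\otimes\overline{V}^*$. Two unipotent Jordan blocks of sizes $\mu_i,\mu_j$ tensor to blocks of size at most $\mu_i+\mu_j-1$. The maximum $2\mu_1-1$ is attained on the Hermitian (real) part, and an ample class has a nonzero component there. This gives $k+1=2\mu_1-1$, that is, $\mu_1=k/2+1$.

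\emph{Step 2: compute $\plov$.} By \cref{lemma:plov}, $\Vol(\Gamma_n)=\Delta_n(f,H_X)^d$. For an abelian variety, the top self-intersection of a class is $d!$ times the determinant of the associated Hermitian form, up to a fixed normalization. So I need the growth of
\[
\det\Bigl(\sum_{m=0}^{n-1}(\rho(g)^m)^*H\,\rho(g)^m\Bigr).
\]
First consider a single Jordan block $J$ of size $\mu$, written in a basis adapted to the flag. The $(i,j)$ entry of $\sum_m (J^m)^*HJ^m$ grows like $n^{i+j-1}$ with leading coefficient a Hilbert-type Gram matrix $\bigl(1/((i+j-1)(i-1)!(j-1)!)\bigr)$, which is positive definite. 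Hence the determinant grows like $n^{\sum_{i=1}^{\mu}(2i-1)}=n^{\mu^2}$. For several blocks, I would show that the determinant is asymptotically the product of the block contributions. The argument is to rescale coordinates by the diagonal matrix of powers $n^{-(i-1/2)}$ within each block and observe that the normalized matrix converges to a positive definite limit. Positivity of the limit follows from the Riemann-sum interpretation: it is the Gram matrix of the functions $t\mapsto t^{i-1}v$ in $L^2([0,1])$ for the positive form $H$. This yields $\plov(f)=\sum_i\mu_i^2$.

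\emph{Step 3: realizability.} Given $\mu\in P(d,d,d)$ with $\mu_1=k/2+1$, take $X=E^d$ for any elliptic curve $E$, split as $\prod_i E^{\mu_i}$. Let $f$ act on the factor $E^{\mu_i}$ by the unipotent Jordan block in $\mathrm{GL}_{\mu_i}(\bZ)$. By Steps 1 and 2 this gives first-degree growth $n^k$ and $\plov(f)=\sum_i\mu_i^2$. Combined with the converse direction from Steps 1 and 2, this establishes the equality of sets.

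I expect the main obstacle to be Step 2 in positive characteristic. There the analytic representation is unavailable, and one must run the same Gram-determinant asymptotics for the $\ell$-adic representation paired with the Riemann form. This requires checking that the pairing between the two copies of each Jordan block still yields a nondegenerate limiting form, and that the exponent comes out as $\mu_i^2$ rather than $2\mu_i^2$ after taking the appropriate square root of the degree.
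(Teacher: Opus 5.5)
Over $\bC$ your argument is correct, and it takes a genuinely different route from the paper. The paper's proof simply quotes \cite[Theorem~1.4]{LOZ25} and \cite[Theorems~1.1 and~1.12]{Hu-GK-AV}, and only describes the $E^d$ construction of your Step~3. Your Steps~1--2 instead reprove the characteristic-zero case of those citations. Two small points of precision:
\begin{itemize}
\item In Step~1, put $N_V\coloneqq\rho(g)-1$. The coefficient of $m^{2\mu_1-2}$ in $(g^m)^*H$ is $H(N_V^{\mu_1-1}\cdot,N_V^{\mu_1-1}\cdot)/((\mu_1-1)!)^2$, which is nonzero because $H$ is positive definite. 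This is what ``an ample class has a nonzero component there'' should mean.
\item In Step~2, set $S_n\coloneqq\sum_{m<n}(\rho(g)^m)^*H\rho(g)^m$. Its rescaled limit is the Gram matrix of the linearly independent functions $t^{i-1}v_1^{(p)}/(i-1)!$, where $v_1^{(p)}$ is the socle vector of the $p$-th block. Hence $\det S_n\sim c\,n^{\sum\mu_i^2}$ with $c>0$. The constant $c$ is not a product over blocks unless the socle vectors are $H$-orthogonal, but only the exponent matters.
\end{itemize}

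The gap is positive characteristic, which the lemma covers. The paper works over $\bk$ of arbitrary characteristic and invokes \cite{Hu-GK-AV} precisely there. You give no argument there for the inclusion of the left-hand set into the right-hand one, and the $\ell$-adic route you sketch has two concrete defects.

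\emph{First defect: no doubling.} The Weil pairing $e^H$ on $V_\ell X$ does not force the Jordan blocks of $g$ to come in pairs. For $k>0$, $g$ does not preserve $e^H$; one only has $e^H(gx,y)=e^H(x,g^\dagger y)$. Even an $e^H$-symplectic unipotent may have an even-size block of odd multiplicity. Without this doubling you cannot attach a partition $\mu$ of $d$ to $f$ at all; this is what \cite[Theorem~1.12, Remark~1.13]{Hu-GK-AV} supplies.

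\emph{Second defect: no positivity.} The determinant asymptotics do not transfer, because $e^H$ is alternating rather than positive. The limit matrix $\bigl(e^H(v_1^{(p)},v_1^{(q)})/((i+j-1)(i-1)!(j-1)!)\bigr)$ can be singular. For example, take Jordan type $(2,2,1,1)$ and let $e^H$ pair the socle of the $i$-th size-$2$ block only with the $i$-th size-$1$ block. The limit then splits into two $3\times 3$ alternating pieces, each singular. So rescaling yields only $\plov(f)\le\sum\mu_i^2$.

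To get equality you must restore positivity. One way: write $\phi_L\colon X\to X^\vee$ for the homomorphism attached to $L$, and identify $\N^1(X)_\bR$ with the Rosati-symmetric part of $\mathrm{End}^0(X)\otimes\bR$. That algebra is a product of matrix algebras over $\bR$, $\bC$ or the quaternions, and the Rosati involution, being positive in every characteristic, is conjugate transpose there. Then $\phi_H^{-1}\phi_{\Delta_n}=\sum_{m<n}(g^m)^\dagger g^m$. Moreover, $(\Delta_n(f,H_X)^d)^2$ is, up to a constant, a product of powers of reduced norms of this element. Your Hermitian Gram argument then applies, with some bookkeeping to extract the partition of $d$. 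Alternatively, cite \cite[Theorem~1.1]{Hu-GK-AV} as the paper does.

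The realization direction in characteristic $p$ is easy to repair on its own. Let $A$ be the integer unipotent matrix defining $f$ on $E^d$. Real symmetric matrices $S$ give classes $L_S\in\N^1(E^d)_\bR$ with $L_S^d=d!\det S$ and $f^*L_S=L_{A^{T}SA}$. So your argument runs verbatim with real symmetric matrices in place of Hermitian forms.
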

\begin{proof}
Note that for any zero-entropy automorphism $f$ of an abelian variety $X$, one has that $\deg_1(f^n) \asymp n^k$ if and only if the maximum size of Jordan blocks of $f^*$ on $H^1(X, \bC)$ (or on the $\ell$-adic Tate space $V_\ell(X)$, in positive characteristic) is $k/2+1$, see \cite[Theorem~1.12, Remark~1.13]{Hu-GK-AV}.
Hence by \cite[Theorem~1.4]{LOZ25} and \cite[Theorem~1.1]{Hu-GK-AV}, the left-hand set is contained in the right-hand set.

Conversely, all values in the right-hand set are realizable by a standard construction: for any $\mu\in P(d,d,d)$ with $\mu_1=k/2+1$, one can take $X=E^d$ for an elliptic curve $E$, and let $f$ be the automorphism of $X$ defined coordinatewise by the unipotent matrix $\bigoplus_{i=1}^d J_{1,\mu_i}$, omitting the zero parts.
Then again by \cite[Theorem~1.4]{LOZ25} and \cite[Theorem~1.1]{Hu-GK-AV}, we have that $\plov(f)$ is exactly $\sum_{i=1}^d \mu_i^2$ (cf.~\cite[Example~5.6]{HJ25}).
This proves the reverse inclusion.
\end{proof}

\subsection{Dynamical intersection polynomials}
\label{subsec:mixed-poly}

One key contribution of this paper is the new characterization of $\plov$ in terms of the total degree of the associated dynamical intersection polynomial; see \cref{thm:plov=d+max}.
Recall that under \cref{assum:XfN}, the ``formal log-monodromy'' operator $L$ on $\N^1(X)_\bR$ is defined by \cref{eq:L=logf}, so that
\[
f^*|_{\N^1(X)_\bR} = \exp(L) = \sum_{i=0}^{k} \frac{L^i}{i!}.
\]

\begin{definition}[Dynamical intersection polynomials]
\label{def:Phi}
Under \cref{assum:XfN}, the \emph{dynamical intersection polynomial} associated with $f$ and $H_X$ is defined by the real symmetric polynomial
\begin{align}\label{eq:def Phi}
\begin{split}
\Phi_{f, H_X}(\bx) &\coloneqq \prod_{j=1}^d \bigl(\exp(x_j L)H_X\bigr) = \prod_{j=1}^d \, \sum_{i=0}^{k} \frac{L^iH_X}{i!} x_j^i \\[2pt]
&=\sum_{0\le \lambda_1,\ldots, \lambda_d \le k} \frac{w_\lambda}{\lambda!} \, \bx^\lambda \in \bR[x_1,\ldots,x_d],
\end{split}
\end{align}
where for each $\lambda = (\lambda_1,\ldots,\lambda_d) \in \{0,1,\ldots,k\}^d$, we set $\lambda!\coloneqq \lambda_1!\cdots\lambda_d!$,
\[
w_\lambda \coloneqq (L^{\lambda_1}H_X)\cdots (L^{\lambda_d}H_X), \quad \text{and} \quad \bx^\lambda \coloneqq x_1^{\lambda_1}\cdots x_d^{\lambda_d}.
\]
Denote by $\Phi_{f, H_X}^{\mathrm{top}}(\bx)$ the highest-degree homogeneous part of $\Phi_{f, H_X}(\bx)$; equivalently, it is just the leading coefficient of  $\Phi_{f, H_X}(t\bx)$ as a polynomial in $t$.
\end{definition}

As intersection product is symmetric, it suffices to consider the intersection numbers $w_\lambda$ with
\[k\geq \lambda_1\geq \cdots \geq \lambda_d\geq 0.\]
This is exactly where restricted partitions enter the study of $\plov$.
More precisely, for each integer $n$ with $0\leq n\leq dk$ and each restricted partition
\[
\lambda = (\lambda_1,\ldots,\lambda_d) = (k^{e_k},\ldots,1^{e_1},0^{e_0}) \in P(k,d,n),
\]
we write
\begin{equation}
\label{eq:w-lambda}
w_\lambda = (L^{\lambda_1}H_X) \cdots (L^{\lambda_d}H_X) = \prod_{i=0}^k \, (L^{i}H_X)^{e_i}.
\end{equation}

Note that by definition, for every $(m_1,\ldots, m_d)\in \bZ^d$, one has
\begin{align}\label{eq Phi mi >0}
\Phi_{f, H_X}(m_1,\ldots, m_d) = \prod_{j=1}^d \bigl(\exp(m_j L)H_X\bigr) = \prod_{j=1}^d \bigl((f^{m_j})^*H_X\bigr) > 0,
\end{align}
since each $(f^{m_j})^*H_X$ is ample.
Also, by the projection formula, $\Phi_{f, H_X}(m,\ldots,m)=H_X^d>0$ for every $m\in \bZ$.
Hence $\Phi_{f, H_X}^{\mathrm{top}}$ is well defined and nonzero. We next show that $\Phi_{f, H_X}^{\mathrm{top}}$ is nonnegative on $\bR^d$.

\begin{lemma}\label{lem:in>0}
With notation as above, for every $\bx\in \bR^d$, we have
\begin{align}\label{eq:Phi top >0}
\Phi_{f, H_X}^{\mathrm{top}} (\bx) \ge 0.
\end{align}
\end{lemma}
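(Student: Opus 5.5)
We prove \eqref{eq:Phi top >0} by a scaling-and-density argument, starting from the positivity \eqref{eq Phi mi >0} at integer points. Let $D \coloneqq \deg \Phi_{f, H_X}$, so that $\Phi_{f, H_X}^{\mathrm{top}}$ is homogeneous of degree $D$. For every $\bx\in\bR^d$ we have
\[
\Phi_{f, H_X}^{\mathrm{top}}(\bx)=\lim_{t\to\infty} t^{-D}\,\Phi_{f, H_X}(t\bx),
\]
since $\Phi_{f, H_X}(t\bx)$ is a polynomial in $t$ whose $t^D$-coefficient is $\Phi_{f, H_X}^{\mathrm{top}}(\bx)$ and whose remaining terms have lower degree. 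Because $\Phi_{f, H_X}^{\mathrm{top}}$ is a polynomial, hence continuous, it suffices to prove nonnegativity on a dense subset of $\bR^d$. The natural choice is $\bQ^d$.

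Fix $\bx\in\bQ^d$ and choose a positive integer $q$ with $q\bx\in\bZ^d$. Set $t=mq$ for $m\in\bZ_{>0}$. Then $t\bx = m(q\bx)\in\bZ^d$, so \eqref{eq Phi mi >0} gives $\Phi_{f, H_X}(t\bx)>0$. Dividing by $t^D>0$ and letting $m\to\infty$ along this sequence yields $\Phi_{f, H_X}^{\mathrm{top}}(\bx)\ge 0$. Nonnegativity on all of $\bR^d$ then follows by continuity.

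The only points that need care are that the limit is taken along the subsequence $t=mq$, which is legitimate because the limit of $t^{-D}\Phi_{f, H_X}(t\bx)$ exists as $t\to\infty$ through the reals, and that $\Phi_{f, H_X}^{\mathrm{top}}$ is well defined and nonzero, which was noted just before the statement. I do not expect any real obstacle. The content of the lemma is simply that positivity at lattice points, namely ampleness of the classes $(f^{m_j})^*H_X$, passes to the leading homogeneous part via the scaling limit and the density of $\bQ^d$.
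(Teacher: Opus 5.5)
Your proof is correct and follows essentially the paper's argument: positivity of $\Phi_{f,H_X}(t\bx)$ when $t\bx$ is a lattice point, the scaling limit $t^{-D}\Phi_{f,H_X}(t\bx)\to\Phi_{f,H_X}^{\mathrm{top}}(\bx)$, and then continuity. The only cosmetic difference is that the paper passes from $\bZ^d$ to $\bQ^d$ using homogeneity, whereas you treat rational $\bx$ directly by restricting to $t\in q\bZ_{>0}$; this amounts to the same thing.
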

\begin{proof}
By continuity and homogeneity, it suffices to show 
\cref{eq:Phi top >0} for every $\bx\in \bZ^d$. 
Fix $\bx\in \bZ^d$.
Then by \cref{eq Phi mi >0}, for every integer $t\in \bZ$, we have
\[
\Phi_{f, H_X}(t\bx) = (f^{tx_1})^*H_X\cdots (f^{tx_d})^*H_X > 0.
\]
Note that by the definition of $\Phi_{f, H_X}^{\mathrm{top}}$, 
\[\Phi_{f, H_X}(t\bx)=\Phi_{f, H_X}^{\mathrm{top}} (\bx) \, t^{\deg \Phi_{f, H_X}}+O(t^{\deg \Phi_{f, H_X}-1}).\]
So \[\Phi_{f,H_X}^{\mathrm{top}}(\bx)=\lim_{t\to +\infty}\frac{\Phi_{f, H_X}(t\bx)}{t^{\deg \Phi_{f, H_X}}}\ge 0.\]
Thus \cref{lem:in>0} follows.
\end{proof}

\begin{lemma}\label{lem: top exists monomial all even}
Let $F(\bx)\in \bR[x_1,\ldots,x_d]$ be a nonzero polynomial that is nonnegative on $\bR^d$. Then $F$ contains, with nonzero coefficient, at least one monomial of square type, i.e., of the form $\bx^\lambda = x_1^{\lambda_1}x_2^{\lambda_2}\cdots x_d^{\lambda_d}$ for some $\lambda=(\lambda_1,\ldots, \lambda_d)\in (2\bN)^d$. 
\end{lemma}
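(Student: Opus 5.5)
We argue by the Newton polytope. Let $\mathcal{N}(F)\subset\bR^d$ be the convex hull of the exponent vectors $\lambda$ for which $\bx^\lambda$ appears in $F$ with nonzero coefficient. Since $F\neq 0$, this is a nonempty compact polytope. We will show that every vertex of $\mathcal{N}(F)$ lies in $(2\bN)^d$. Any vertex then gives a monomial of square type with nonzero coefficient.

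Fix a vertex $\lambda$ of $\mathcal{N}(F)$. We can choose a generic weight vector $w\in\bZ^d$ so that the linear functional $\mu\mapsto\langle w,\mu\rangle$ attains its maximum over $\mathcal{N}(F)$ uniquely at $\lambda$. This is possible because the set of such $w$ is a nonempty open cone, so it contains integer points. Let $c_\lambda\neq0$ be the coefficient of $\bx^\lambda$.

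Now take any $\bx=(x_1,\ldots,x_d)$ with all $x_j\neq 0$, and substitute $x_j\mapsto x_j t^{w_j}$ for $t>0$. This gives
\[
F(x_1t^{w_1},\ldots,x_dt^{w_d}) = c_\lambda \bx^\lambda\, t^{\langle w,\lambda\rangle} + (\text{terms of strictly smaller } t\text{-degree}).
\]
As $t\to+\infty$, nonnegativity of $F$ forces $c_\lambda\bx^\lambda\ge 0$ for every $\bx\in(\bR\setminus\{0\})^d$. If some $\lambda_j$ were odd, then flipping the sign of $x_j$ alone would flip the sign of $c_\lambda\bx^\lambda$. That value is nonzero, so this would produce a negative value and contradict the inequality. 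Hence every $\lambda_j$ is even, i.e., $\lambda\in(2\bN)^d$.

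The only point needing care is the existence of a weight that isolates a vertex and the control of the lower-order terms. Both are standard. The first is the definition of a vertex of a polytope. The second holds because the expansion in $t$ is a finite sum of powers of $t$, with the leading power coming only from $\lambda$. No other step is an obstacle.

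In the application, we apply this to $F=\Phi_{f,H_X}^{\mathrm{top}}$. It is nonzero and nonnegative by \cref{lem:in>0}, so it contains a monomial $\bx^\lambda$ with all $\lambda_j$ even and $w_\lambda\neq0$.
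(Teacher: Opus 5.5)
Your argument is correct, but it takes a different route from the paper. The paper averages over sign changes. For a monomial $\bx^\lambda$ with some $\lambda_i$ odd, $\sum_{\epsilon\in\{-1,1\}^d}(\epsilon_1x_1)^{\lambda_1}\cdots(\epsilon_dx_d)^{\lambda_d}=0$. So if $F$ had no square-type monomial, then $\sum_{\epsilon}F(\epsilon_1x_1,\ldots,\epsilon_dx_d)=0$. Every summand is nonnegative, so $F$ vanishes on $\bR^d$, i.e.\ $F=0$.

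You instead isolate a vertex $\lambda$ of the Newton polytope by an integral weight $w$ and degenerate along $t\mapsto(x_1t^{w_1},\ldots,x_dt^{w_d})$. The two points you flag are handled correctly: the existence of an isolating integral weight, and the strict domination of the power $t^{\langle w,\lambda\rangle}$.

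The paper's proof is shorter and needs no convex geometry. Yours gives more: every vertex of the Newton polytope of $F$ lies in $(2\bN)^d$ and has positive coefficient (take $\bx=(1,\ldots,1)$). It also uses less of the hypothesis, since nonnegativity is only needed along monomial curves. Take $w\in\bZ_{>0}^d$ to be a large multiple of $(1,\ldots,1)$ plus a generic perturbation, and restrict to integer $t$ and integer $\bx$. Then your argument applies directly to $\Phi_{f,H_X}$ using only its positivity on $\bZ^d$, and it produces a square-type monomial of top total degree. That is what \cref{lem:deg Phi} needs, and it avoids first passing to $\Phi_{f,H_X}^{\mathrm{top}}$ via \cref{lem:in>0}.
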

\begin{proof}
Note that for any monomial $x_1^{\lambda_1}\cdots x_d^{\lambda_d}$,  if at least one $\lambda_i$ is odd, or equivalently $x_1^{\lambda_1}\cdots x_d^{\lambda_d}$ is not of square type, then
we have
\begin{align}
\sum_{\epsilon_1,\ldots, \epsilon_d\in\{-1,1\}}(\epsilon_1 x_1)^{\lambda_1}\cdots (\epsilon_d x_d)^{\lambda_d}=\Biggl(\prod_{i=1}^d(1+(-1)^{\lambda_i})\Biggr)x_1^{\lambda_1}\cdots x_d^{\lambda_d}=0.\label{eq:square type}
\end{align} 
Suppose to the contrary that all monomials with nonzero coefficients in $F$ are not of square type. Then by \cref{eq:square type}, we have
\begin{align*}
\sum_{\epsilon_1, \ldots, \epsilon_d\in\{-1,1\}}F(\epsilon_1 x_1, \ldots,  \epsilon_d x_d)=0,
\end{align*}
which implies that $F=0$ by nonnegativity, a contradiction. 
\end{proof}

We give a formula for the total degree of $\Phi_{f, H_X}$ in terms of non-vanishing of intersection numbers $w_\lambda$.
\begin{lemma}\label{lem:deg Phi}
With notation as above, we have
\begin{align}
\label{eq deg Phi=max}
\deg\Phi_{f, H_X} &=  \max\bigl\{ n : w_{\lambda}\neq 0 \, \text{ for some } \lambda\in P(k, d, n) \bigr\} \\
&=  \max\bigl\{ n : w_{\lambda}\neq 0 \, \text{ for some } \lambda\in P(k, d, n)\cap (2\bN)^d \bigr\}.\label{eq deg Phi=max even}
\end{align}
\end{lemma}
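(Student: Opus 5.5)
The plan is to read off the total degree of $\Phi_{f,H_X}$ directly from the expansion \eqref{eq:def Phi}, and then pass to the top homogeneous part to obtain the even refinement.

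For the first equality \eqref{eq deg Phi=max}, note that the monomials $\bx^\lambda$ for distinct $\lambda\in\{0,\ldots,k\}^d$ are linearly independent. Hence the coefficient of $\bx^\lambda$ in $\Phi_{f,H_X}$ is exactly $w_\lambda/\lambda!$, with no possible cancellation. Therefore $\deg\Phi_{f,H_X}$ equals the largest $|\lambda|$ with $w_\lambda\neq 0$.

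Since intersection numbers are symmetric in their factors, $w_\lambda$ depends only on the multiset of the entries of $\lambda$. We may therefore sort the entries in decreasing order and obtain a restricted partition in $P(k,d,|\lambda|)$ with the same value of $w$. This gives \eqref{eq deg Phi=max}.

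For the second equality \eqref{eq deg Phi=max even}, the inequality "$\ge$" is trivial, because the set on the right is a subset of the set on the left. For the reverse inequality, let $D=\deg\Phi_{f,H_X}$ and consider $F\coloneqq\Phi^{\mathrm{top}}_{f,H_X}$. This is nonzero, as observed before \cref{lem:in>0}, since $\Phi_{f,H_X}(m,\ldots,m)=H_X^d>0$ for all $m$. It is also nonnegative on $\bR^d$ by \cref{lem:in>0}.

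Applying \cref{lem: top exists monomial all even} to $F$, we get a monomial $\bx^\lambda$ with $\lambda\in(2\bN)^d$ and nonzero coefficient in $F$. Because $F$ is homogeneous of degree $D$, we have $|\lambda|=D$. Its coefficient is $w_\lambda/\lambda!$, so $w_\lambda\neq 0$. Sorting $\lambda$ as before yields an element of $P(k,d,D)\cap(2\bN)^d$ with nonvanishing $w$. Hence the maximum in \eqref{eq deg Phi=max even} is at least $D$.

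There is essentially no obstacle here. The only point requiring care is that the coefficient of $\bx^\lambda$ is exactly $w_\lambda/\lambda!$, rather than a sum over permutations. This holds because \eqref{eq:def Phi} is already written as a sum over ordered tuples $\lambda\in\{0,\ldots,k\}^d$, so each monomial appears exactly once.
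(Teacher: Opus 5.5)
Your proposal is correct and follows essentially the same route as the paper: you read off the degree from the coefficients $w_\lambda/\lambda!$ and use symmetry of intersection numbers to reduce to restricted partitions. For the even refinement you apply \cref{lem: top exists monomial all even} to the nonzero, nonnegative polynomial $\Phi_{f,H_X}^{\mathrm{top}}$, with a homogeneity argument giving $|\lambda| = \deg \Phi_{f,H_X}$, exactly as the paper does.
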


\begin{proof}
By \cref{eq:def Phi}, the coefficient of the monomial $\bx^\lambda$ in $\Phi_{f, H_X}$ is $w_\lambda/\lambda!$, and $w_\lambda$ is symmetric in the entries of $\lambda$, so \cref{eq deg Phi=max} follows.

By \cref{lem:in>0}, the polynomial $\Phi_{f,H_X}^{\mathrm{top}}$ is nonnegative on $\bR^d$.
Since it is also nonzero, \cref{lem: top exists monomial all even} implies that $\Phi_{f,H_X}^{\mathrm{top}}$ contains a monomial $\bx^\lambda$ with $\lambda \in (2\bN)^d$ and nonzero coefficient $w_\lambda/\lambda!$.
By the symmetry of $\Phi_{f, H_X}$, we may assume that $\lambda$ is a restricted partition.
It follows that $\deg\Phi_{f, H_X} = \deg\Phi_{f,H_X}^{\mathrm{top}} = |\lambda|$.
This proves \cref{eq deg Phi=max even}.
\end{proof}

The following elementary Riemann-sum asymptotic will be used to relate $\plov(f)$ to the degree of the dynamical intersection polynomial.

\begin{lemma}[Riemann-sum asymptotic for polynomials]
\label{lemma:Riemann-sum}
Let $F(\bx)\in \bR[x_1,\ldots,x_d]$ be a polynomial of total degree $D$, and let $F^{\mathrm{top}}(\bx)$ denote its homogeneous part of degree $D$.
Then
\[
\lim_{n\to\infty} \frac{1}{n^{d+D}} \sum_{0\le m_1,\ldots,m_d<n} F(m_1,\ldots,m_d) = \int_{[0,1]^d} F^{\mathrm{top}}(\bx)\, \diff \bx.
\]
Equivalently,
\[
\sum_{0\le m_1,\ldots,m_d<n} F(m_1,\ldots,m_d) = n^{d+D}\int_{[0,1]^d} F^{\mathrm{top}}(\bx)\, \diff \bx + O(n^{d+D-1}).
\]
\end{lemma}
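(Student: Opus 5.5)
The sum is linear in $F$, so I will expand $F$ into monomials and handle each monomial $\bx^\lambda = x_1^{\lambda_1}\cdots x_d^{\lambda_d}$ separately. For a monomial the sum factorizes:
\[
\sum_{0\le m_1,\ldots,m_d<n} \bx^\lambda = \prod_{j=1}^d S_{\lambda_j}(n-1),
\qquad S_i(n-1)=\sum_{m=0}^{n-1} m^i .
\]
By the Bernoulli polynomial formula recalled in \Cref{subsec:plov}, $S_i(n-1) = \frac{B_{i+1}(n)-B_{i+1}(0)}{i+1}$ is a polynomial in $n$ of degree $i+1$. Its leading coefficient is $\frac{1}{i+1} = \int_0^1 x^i\,\diff x$. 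Multiplying the $d$ factors shows that $\sum \bx^\lambda$ is a polynomial in $n$ of degree $d+|\lambda|$, with leading coefficient $\prod_j \frac{1}{\lambda_j+1} = \int_{[0,1]^d} \bx^\lambda\,\diff\bx$.

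Now write $F = \sum_\lambda c_\lambda \bx^\lambda$ and split it as $F = F^{\mathrm{top}} + G$, where $\deg G \le D-1$. For each monomial of $G$ we have $|\lambda|\le D-1$, so its contribution is a polynomial in $n$ of degree at most $d+D-1$, hence $O(n^{d+D-1})$. For each monomial of $F^{\mathrm{top}}$ we have $|\lambda| = D$, so its contribution equals
\[
n^{d+D}\, c_\lambda\int_{[0,1]^d}\bx^\lambda\,\diff\bx + O(n^{d+D-1}).
\]
Summing these finitely many terms and using linearity of the integral gives
\[
\sum_{0\le m_1,\ldots,m_d<n} F(m_1,\ldots,m_d) = n^{d+D}\int_{[0,1]^d}F^{\mathrm{top}}(\bx)\,\diff\bx + O(n^{d+D-1}).
\]
Dividing by $n^{d+D}$ gives the limit form of the statement.

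The argument has no real obstacle. The one point to keep straight is that the error terms are genuinely polynomial in $n$ of degree at most $d+D-1$. This follows because every $S_i(n-1)$ is a polynomial in $n$ whose degree is exactly $i+1$, so no step needs an actual Riemann-sum estimate. If one prefers to avoid Bernoulli polynomials, a comparison of $\sum_{m<n} m^i$ with $\int_0^n t^i\,\diff t$ gives the bound $O(n^i)$ on the difference, and that suffices as well.
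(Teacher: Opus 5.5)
Your proof is correct, but it handles the main term differently from the paper.

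Both arguments split $F=F^{\mathrm{top}}+F^{\le D-1}$. The paper bounds the lower-degree part crudely, using $|F^{\le D-1}|\le Cn^{D-1}$ on $[0,n]^d$. For the top part it uses homogeneity to rewrite $n^{-(d+D)}\sum F^{\mathrm{top}}(m_1,\ldots,m_d)$ as the Riemann sum $n^{-d}\sum F^{\mathrm{top}}(m_1/n,\ldots,m_d/n)$, which converges to $\int_{[0,1]^d}F^{\mathrm{top}}(\bx)\,\diff\bx$. You instead expand into monomials and compute each monomial sum exactly as $\prod_j S_{\lambda_j}(n-1)$. You then read off the leading coefficient $\prod_j(\lambda_j+1)^{-1}=\int_{[0,1]^d}\bx^\lambda\,\diff\bx$ from the Bernoulli/Faulhaber formula, so no limiting argument is needed.

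The paper's route is more conceptual. It explains why an integral appears and needs no explicit formula for power sums. Taken literally, though, it only gives an $o(n^{d+D})$ error for the main term. The ``equivalently'' form with error $O(n^{d+D-1})$ needs one more input, which the paper leaves implicit: either an $O(1/n)$ bound for Riemann sums of a Lipschitz integrand, or the fact that the whole sum is a polynomial in $n$ of degree at most $d+D$.

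Your computation proves exactly that polynomiality along the way. It therefore yields the $O(n^{d+D-1})$ statement directly and makes the degree conclusion $\deg_n\Delta_n(f,H_X)^d=d+\deg\Phi_{f,H_X}$ in \cref{thm:plov=d+max} fully transparent. The only cost is the explicit monomial bookkeeping, including the convention $0^0=1$ for the $\lambda_j=0$ factors.
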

\begin{proof}
Write $F(\bx) = F^{\mathrm{top}}(\bx) + F^{\le D-1}(\bx)$, where $F^{\le D-1}(\bx)$ has total degree at most $D-1$. Then there is a constant $C\geq 0$ such that
\[
|F^{\le D-1}(\bx)|\le C n^{D-1}
\qquad\text{for all } \bx \in [0,n]^d .
\]
Indeed, $C$ can be chosen as the sum of the absolute values of all coefficients of $F^{\le D-1}$, if any. 
Hence
\[
\sum_{0\le m_1,\ldots,m_d<n} F^{\le D-1}(m_1,\ldots,m_d)
=O(n^{d+D-1}).
\]
On the other hand, since $F^{\mathrm{top}}$ is homogeneous of degree $D$,
\[
\frac{1}{n^{d+D}}
\sum_{0\le m_1,\ldots,m_d<n} F^{\mathrm{top}}(m_1,\ldots,m_d)
=
\frac1{n^d}
\sum_{0\le m_1,\ldots,m_d<n}
F^{\mathrm{top}}\Bigl(\frac{m_1}{n},\ldots,\frac{m_d}{n}\Bigr),
\]
which converges to the Riemann integral
\[
\int_{[0,1]^d}F^{\mathrm{top}}(\bx)\,\diff \bx.
\]
This proves \cref{lemma:Riemann-sum}.
\end{proof}

The following theorem provides a new characterization of $\plov(f)$, i.e., the degree of $\Delta_n(f, H_X)^d$ viewed as a polynomial in $n$, using the dynamical intersection polynomial $\Phi_{f, H_X}$.
Our \cref{thm:main1} essentially follows from \cref{thm:plov=d+max,prop:thm3.2}.

\begin{theorem}
\label{thm:plov=d+max}
Let $X$ be a normal projective variety of dimension $d$ and let $H_X$ be an ample divisor on $X$. Let $f$ be a zero-entropy automorphism such that $f^*|_{\N^1(X)_\bR}$ is unipotent.
Let $\Phi_{f, H_X}$ denote the associated dynamical intersection polynomial (see \cref{def:Phi}).
Then we have
\begin{align}
\plov(f) &= d + \deg \Phi_{f, H_X} \label{eq:plov=d+deg Phi}\\
&= d + \max\bigl\{ n : w_{\lambda}\neq 0 \, \text{ for some } \lambda\in P(k, d, n) \bigr\} \label{eq:plov=d+max}\\
&= d + \max\bigl\{ n : w_{\lambda}\neq 0 \, \text{ for some } \lambda\in P(k, d, n) \cap (2\bN)^d\bigr\},\label{eq:plov=d+max even}
\end{align}
where $w_\lambda$ is the intersection number defined in \cref{eq:w-lambda}.
 Moreover, the leading coefficient of $\Delta_n(f, H_X)^d$ as a polynomial in $n$ is exactly 
\[
\int_{[0,1]^d} \Phi_{f, H_X}^{\mathrm{top}}(\bx) \, \diff \bx.
\]
\end{theorem}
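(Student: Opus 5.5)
Expand the top self-intersection multilinearly: since $\Delta_n(f,H_X)=\sum_{m=0}^{n-1}\exp(mL)H_X$, we get
\[
\Delta_n(f,H_X)^d=\sum_{0\le m_1,\ldots,m_d<n}\prod_{j=1}^d\bigl(\exp(m_jL)H_X\bigr)=\sum_{0\le m_1,\ldots,m_d<n}\Phi_{f,H_X}(m_1,\ldots,m_d).
\]
This identity is exact. It only needs multilinearity of the intersection product and the fact that $\exp(mL)H_X=(f^m)^*H_X$ in $\N^1(X)_\bR$ for all integers $m\ge 0$. The latter holds because $\exp(L)=f^*|_{\N^1(X)_\bR}$ and $\exp(mL)=\exp(L)^m$ for the nilpotent $L$. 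In particular, $\Delta_n(f,H_X)^d$ is a lattice-point sum of the polynomial $\Phi_{f,H_X}$ over the cube $\{0,\ldots,n-1\}^d$.

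Next apply \cref{lemma:Riemann-sum} with $F=\Phi_{f,H_X}$ and $D=\deg\Phi_{f,H_X}$. It gives
\[
\Delta_n(f,H_X)^d=n^{d+D}\int_{[0,1]^d}\Phi^{\mathrm{top}}_{f,H_X}(\bx)\,\diff\bx+O(n^{d+D-1}).
\]
By \cref{lemma:plov}, $\Delta_n(f,H_X)^d$ is a polynomial in $n$ whose degree is $\plov(f)$. So it remains to show that the integral $I:=\int_{[0,1]^d}\Phi^{\mathrm{top}}_{f,H_X}\,\diff\bx$ is nonzero. Once that is known, the polynomial has degree exactly $d+D$ and leading coefficient $I$, which proves \cref{eq:plov=d+deg Phi} and the final statement. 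The equalities \cref{eq:plov=d+max} and \cref{eq:plov=d+max even} then follow at once from \cref{lem:deg Phi}.

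The main step is showing $I>0$. By \cref{lem:in>0}, $\Phi^{\mathrm{top}}_{f,H_X}\ge0$ on $\bR^d$, so $I\ge0$. Moreover, $\Phi^{\mathrm{top}}_{f,H_X}$ is a nonzero polynomial, as observed just before \cref{lem:in>0}. A nonzero polynomial cannot vanish identically on the open cube $(0,1)^d$, since it would then vanish everywhere. By continuity it is strictly positive on some nonempty open subset of $[0,1]^d$, so its integral there is strictly positive. Hence $I>0$, and no cancellation can occur in the leading term. This positivity is the crux, and it is where the nonnegativity from ampleness in \cref{lem:in>0} is essential.
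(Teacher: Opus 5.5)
Your proposal is correct and follows essentially the same route as the paper. You expand $\Delta_n(f,H_X)^d$ as the lattice-point sum of $\Phi_{f,H_X}$ and apply \cref{lemma:Riemann-sum}. You then rule out cancellation in the leading term using that $\Phi^{\mathrm{top}}_{f,H_X}$ is nonnegative and not identically zero, and the remaining equalities follow from \cref{lem:deg Phi}. Your positivity step (strict positivity on a nonempty open subset of $[0,1]^d$) is only a minor rephrasing of the paper's observation that $\Phi^{\mathrm{top}}_{f,H_X}$ is almost everywhere positive on the cube.
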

\begin{proof}
For \cref{eq:plov=d+deg Phi,eq:plov=d+max,eq:plov=d+max even}, by \cref{lem:deg Phi}, it suffices to prove the first \cref{eq:plov=d+deg Phi}.
Denote the total degree $\deg \Phi_{f, H_X}$ of $\Phi_{f, H_X}$ by $d_\Phi$ for simplicity.
Recall that by \cite[Lemma~2.16]{LOZ25} (see also \cref{lemma:plov}), $\plov(f)$ is equal to the degree of the polynomial $\Delta_n(f, H_X)^d$ in $n$. 
By the definitions of $\Delta_n(f, H_X)$ and dynamical intersection polynomial $\Phi_{f, H_X}$, we have
\begin{align*}
\Delta_{n}(f, H_X)^d &= \left(\sum_{m=0}^{n-1} (f^m)^*H_X \right)^{\! d}\\
&= \sum_{0\leq m_1, \ldots, m_d<n}(f^{m_1})^*H_X\cdots (f^{m_d})^*H_X\\
&= \sum_{0\leq m_1, \ldots, m_d<n}\Phi_{f, H_X}(m_1, \ldots, m_d)\\
&= n^{d+d_\Phi}\int_{[0,1]^d} \Phi_{f, H_X}^{\mathrm{top}}(\bx) \, \diff \bx+O(n^{d+d_\Phi-1}).
\end{align*}
For the last equality, we apply \cref{lemma:Riemann-sum} to $F=\Phi_{f,H_X}$.
Here note that $\Phi_{f, H_X}^{\mathrm{top}}(\bx)$ is not identically $0$ by definition, hence it is almost everywhere positive on $[0, 1]^d$ by \cref{lem:in>0}, which implies that $ \int_{[0,1]^d} \Phi_{f, H_X}^{\mathrm{top}}(\bx) \, \diff \bx>0$. This proves that 
\[
\plov(f)=\deg_n\Delta_{n}(f, H_X)^d=d+d_\Phi=d+\deg \Phi_{f, H_X}
\]
and the leading coefficient of $\Delta_n(f, H_X)^d$ is exactly 
$
\int_{[0,1]^d} \Phi_{f, H_X}^{\mathrm{top}}(\bx) \, \diff \bx.$ 
\end{proof}

\begin{remark}
\label{rmk:cancellation}
Previously, because of the potential cancellation, it was only known that  
\[
\plov(f) \le d + \max\bigl\{ n : w_{\lambda}\neq 0 \text{ for some } \lambda\in P(k, d, n) \bigr\};
\]
see \cite[Lemma~2.16]{LOZ25} or \cite[Lemma~2.1]{HJ25}. 
By proving the \cref{eq:plov=d+max}, \cref{thm:plov=d+max} affirmatively answers an open question of the authors \cite[Question~2.2]{HJ25}.
On the other hand, it is also easy to see from \cref{eq:plov=d+max even} that $\plov(f)-d$ is divisible by $2$, yielding another proof of the parity principle, i.e., \cref{prop:same-parity}.
\end{remark}

\subsection{Vanishing and non-vanishing of intersection numbers}
\label{subsec:intersection-numbers}

As already observed in \cite{LOZ25, HJ25}, in order to apply \cref{thm:plov=d+max}, it is essential to study the vanishing or non-vanishing property for the intersection numbers $w_\lambda$.

With the help of the new operator $L$, the following lemma provides the desired homogeneous linear equations on these intersection numbers $w_\lambda$. 

\begin{lemma}
\label{lemma:equations-v-lambda}
For each integer $n$ with $1\leq n\leq dk$, the intersection numbers $w_\lambda$, for $\lambda \in P(k,d,n)$, defined in \cref{eq:w-lambda}, satisfy the following system of homogeneous linear equations:
\[
A_{k,d,n} \cdot (w_\lambda)_{\lambda\in P(k,d,n)} = \mathbf{0},
\]
where $A_{k,d,n}\in \bR^{p(k,d,n-1)\times p(k,d,n)}$ is the weighted incidence matrix defined in \cref{def:Akdn}.
\end{lemma}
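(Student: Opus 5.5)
The plan is to show that $L$ acts as a derivation that annihilates the intersection form, and then to read off the rows of $A_{k,d,n}$ from this identity.

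\emph{Step 1 (invariance for integer times).} Fix arbitrary classes $D_1,\ldots,D_d\in \N^1(X)_\bR$. For every $m\in\bZ$, the map $f^m$ is an automorphism, so it has degree $1$. By the projection formula, $((f^m)^*D_1)\cdots((f^m)^*D_d)=D_1\cdots D_d$. Since $(f^m)^*|_{\N^1(X)_\bR}=(\id+N)^m=\exp(mL)$, this says
\[
\Psi(t)\coloneqq (\exp(tL)D_1)\cdots(\exp(tL)D_d)
\]
equals $D_1\cdots D_d$ for all $t\in\bZ$.

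\emph{Step 2 (polynomial extension).} Because $L$ is nilpotent, $\exp(tL)=\sum_{i=0}^k t^iL^i/i!$ is polynomial in $t$. By multilinearity of the intersection product, $\Psi(t)$ is therefore a real polynomial in $t$. A polynomial that is constant on $\bZ$ is constant. Hence $\Psi'(0)=0$, which gives the derivation identity
\[
\sum_{j=1}^d D_1\cdots D_{j-1}\cdot(LD_j)\cdot D_{j+1}\cdots D_d=0 .
\]

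\emph{Step 3 (reading off the equations).} Fix $n$ with $1\le n\le dk$ and a partition $\mu=(k^{e_k},\ldots,0^{e_0})\in P(k,d,n-1)$. Apply the identity of Step 2 with $D_j=L^{\mu_j}H_X$.

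The $j$-th summand is the intersection number obtained by raising the $j$-th part of $\mu$ by one. If $\mu_j=k$, this summand vanishes because $L^{k+1}=0$. If $\mu_j=i\le k-1$, then, by the symmetry of the intersection product, the summand equals $w_{\mu(i)}$. Exactly $e_i$ indices $j$ satisfy $\mu_j=i$. Grouping the summands therefore gives
\[
\sum_{i=0}^{k-1} e_i\,w_{\mu(i)}=0,
\]
where only terms with $e_i>0$ occur. By \cref{def:Akdn}, the left-hand side is precisely $\sum_{\lambda\in P(k,d,n)} a_{\mu,\lambda}w_\lambda$, that is, the $\mu$-th row of $A_{k,d,n}\cdot(w_\lambda)_\lambda$. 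Letting $\mu$ range over $P(k,d,n-1)$ gives the full system.

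The only point that needs care is Step 2, namely passing from invariance at integer times to the infinitesimal identity. This is exactly the advantage of working with $L=\log(f^*)$ rather than $N$. The polynomiality argument handles it cleanly and requires no positivity input. The bookkeeping in Step 3 is routine.
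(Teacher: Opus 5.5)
Your proposal is correct and follows the paper's proof essentially verbatim: the paper also shows that $t\mapsto\prod_j(\exp(tL)D_j)$ is a polynomial that is constant on $\bZ$, hence constant, reads the derivation identity off its linear coefficient, and substitutes $D_j=L^{\mu_j}H_X$ to obtain $\sum_i e_i\,w_{\mu(i)}=0$ as the $\mu$-th row of the system. Your explicit remark that the summands with $\mu_j=k$ vanish because $L^{k+1}=0$ is a detail the paper leaves implicit.
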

\begin{proof}
Using the fact that $f^*$ preserves the intersection form, we can show that
\begin{equation}
\label{eq:derivative}
\sum_{j=1}^d D_1 \cdots D_{j-1} \cdot (LD_j) \cdot D_{j+1} \cdots D_d = 0,
\end{equation}
for any divisors $D_1,\ldots,D_d$ in $\N^1(X)_\bR$.
Indeed, consider the real-valued function
\[
\varphi(t) \coloneqq \prod_{j=1}^{d} \bigl(\exp(tL) D_j\bigr) = \prod_{j=1}^{d} \, \sum_{i=0}^{k}\frac{t^i}{i!} L^i D_j = \sum_{0\leq \lambda_1,\ldots,\lambda_d \leq k} \!\!\! \frac{(L^{\lambda_1}D_1) \cdots (L^{\lambda_d}D_d)}{\lambda_1! \cdots \lambda_d!} \, t^{\lambda_1+\cdots+\lambda_d}.
\]
This is a polynomial in $t$ of degree at most $dk$.
Since $f^*|_{\N^1(X)_\bR} = \exp(L)$ preserves the intersection form, $\varphi(t)$ is the constant $D_1\cdots D_d$ for every $t\in \bZ$ and hence for every $t\in \bR$.
It follows that the coefficient of the linear term in $\varphi(t)$ vanishes.
This proves \cref{eq:derivative}.

To obtain the desired system of homogeneous linear equations, it suffices to let $D_1,\ldots,D_d$ vary among $H_X, LH_X,\ldots, L^kH_X$ in \cref{eq:derivative}. 
Precisely, for any $n\geq 1$ and an arbitrary $\mu = (\mu_1,\mu_2,\ldots,\mu_d) = (k^{e_k},\ldots,1^{e_1},0^{e_0}) \in P(k,d,n-1)$,
choosing $D_j=L^{\mu_j}H_X$ for all $1\leq j\leq d$ in \cref{eq:derivative}, 
we obtain the following equation:
\[
\sum_{\substack{0\leq i\leq k-1 \\[1pt] e_i>0}} e_i \, (L^{k}H_X)^{e_k} \cdots (L^{i+1}H_X)^{e_{i+1}+1} \cdot (L^{i}H_X)^{e_{i}-1} \cdots H_X^{e_0} = \sum_{\substack{0\leq i\leq k-1 \\[1pt] e_i>0}} e_i \, w_{\mu(i)} = 0,
\]
where $\mu(i)$ is the same as in the definition of $A_{k,d,n}$; see \cref{def:Akdn}.
This is exactly the $\mu$-th equation in the system $A_{k,d,n} \cdot (w_\lambda)_{\lambda\in P(k,d,n)} = \mathbf{0}$.
\end{proof}

We note that the transition matrix from the divisors $\{N^iH_X\}_{0\le i\le k}$ to the divisors $\{L^iH_X\}_{0\le i\le k}$ in $\N^1(X)_\bR$ is upper-triangular and unipotent.
This allows us to compare the intersection numbers defined using $N$ with those defined using $L$.
We first provide the following combinatorial vanishing property on intersection numbers, which is an analogue of \cite[Claim~5.1]{HJ25} after replacing $N$ by $L$.

\begin{proposition}[{Combinatorial vanishing, cf.~\cite[Claim~5.1]{HJ25}}]
\label{prop:vanishing-after-kd/2}
Assume the setting and the notation of \cref{assum:XfN}.
For any restricted partition $\lambda = (k^{e_k},\ldots,1^{e_1},0^{e_0}) \in P(k,d,n)$, if $n>dk/2$, then
\[
w_\lambda \coloneqq (L^{k}H_X)^{e_k} \cdots (LH_X)^{e_1} \cdot H_X^{e_0} = 0.
\]
\end{proposition}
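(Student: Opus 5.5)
Since $\lambda$ ranges over $P(k,d,n)$ and $n\le dk$ automatically, the plan is a downward induction on $n$. We combine the homogeneous systems of \cref{lemma:equations-v-lambda} with the full-rank statement \cref{thm:full-rank}. Write $\mathbf{w}_n \coloneqq (w_\lambda)_{\lambda\in P(k,d,n)}$. We show $\mathbf{w}_n=\mathbf{0}$ for every $n$ with $dk/2<n\le dk$, starting from $n=dk$ and going down.

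The point is that, unlike the $N$-version in \cite{HJ25}, no induction hypothesis is needed to obtain the linear equations themselves. By \cref{lemma:equations-v-lambda}, for every $1\le n\le dk$ we have the genuine system $A_{k,d,n}\mathbf{w}_n=\mathbf{0}$, with no lower-order error terms. This is exactly what replacing $N$ by $L=\log(\id+N)$ buys. The identity \eqref{eq:derivative} is the infinitesimal form of $f^*$-invariance of the intersection form, and it holds on the nose for $L$. For $N$, by contrast, invariance gives $\prod(\id+N)D_j=\prod D_j$, whose expansion mixes all degrees at once.

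Given this, the argument is immediate and does not even need induction. For $\lfloor dk/2\rfloor<n\le dk$, i.e. $n>dk/2$, \cref{thm:full-rank} gives $\rank A_{k,d,n}=p(k,d,n)$, the number of columns. So $A_{k,d,n}$ has trivial nullspace and $\mathbf{w}_n=\mathbf{0}$. Here $n>dk/2$ is equivalent to $n>\lfloor dk/2\rfloor$ for integers $n$, so the ranges match exactly. Finally, $w_\lambda$ for a restricted partition $\lambda=(k^{e_k},\ldots,0^{e_0})$ is precisely the displayed product by \eqref{eq:w-lambda}, which gives the statement.

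The only step requiring care is checking that \cref{lemma:equations-v-lambda} indeed applies verbatim for these $n$. The $\mu$-th row of $A_{k,d,n}$ must match the derivative identity with $D_j=L^{\mu_j}H_X$. Terms where $\mu_j=k$ are increased to $L^{k+1}H_X=0$; these vanish because $L^{k+1}=0$, consistent with the restriction $0\le i\le k-1$ in \cref{def:Akdn}. I expect no real obstacle beyond this bookkeeping. The substantive content sits in \cref{thm:full-rank} (proved in \cite{HJ25}) and in the exact linear equations from $L$, both of which we take as given.
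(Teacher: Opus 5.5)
Your proposal is correct and is the paper's argument: for each $n>dk/2$ (equivalently $n>\lfloor dk/2\rfloor$), \cref{lemma:equations-v-lambda} places $(w_\lambda)_{\lambda\in P(k,d,n)}$ in the nullspace of $A_{k,d,n}$, and that nullspace is trivial by the full-column-rank case of \cref{thm:full-rank}. As you note yourself, the downward induction you first announce is not needed, because the $L$-equations hold for every $1\le n\le dk$ without any inductive input.
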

\begin{proof}
It follows from \cref{lemma:equations-v-lambda} and \cref{thm:full-rank}.
\end{proof}

Besides \cref{prop:vanishing-after-kd/2}, we also have the following vanishing and positivity properties, that essentially follow from \cite[Theorem~3.2]{HJ25} with $N$ replaced by $L$.
We include the proof for the reader's convenience.
It turns out to be helpful to reformulate the vanishing statement \cite[Theorem~3.2(4)]{HJ25} in terms of restricted partitions $\lambda\in P(k,d,n)$ satisfying $\lambda \succ \kappa(\bt)$, as in \cref{prop:thm3.2}\ref{prop:thm3.2-1}.
This motivates the notion of truncation introduced in \Cref{subsec:truncated-matrices}, which will play a crucial role in the proof of \cref{thm:k2d-4} in \Cref{sec:proof}.

\begin{proposition}[{cf.~\cite[Theorem~3.2]{HJ25}}]
\label{prop:thm3.2}
Assume the setting and the notation of \cref{assum:XfN}.
Then there exists an $(r+1)$-tuple $\bt \coloneqq (t_0,t_1,\ldots,t_r)\in \bZ_{>0}^{r+1}$ with $\sum_{j=0}^r t_j=d$ and an associated restricted partition
\[
\kappa(\bt) \coloneqq \bigl((2r)^{t_r},(2r-2)^{t_{r-1}},\ldots,2^{t_1},0^{t_0}\bigr) \in P\Bigl(2r,d,\sum_{j=0}^r 2jt_j\Bigr)
\]
satisfying the following properties:
\begin{enumerate}[label=\emph{(\arabic*)}, ref=(\arabic*)]
\item \label{prop:thm3.2-1} \emph{Geometric vanishing:} For any \[\lambda = ((2r)^{e_{2r}},(2r-1)^{e_{2r-1}},\ldots,1^{e_1},0^{e_0}) \in P\Bigl(2r,d,\sum_{i=0}^{2r}ie_i\Bigr)\] with $\lambda\succ \kappa(\bt)$ in the lexicographic order, one has
\[
w_\lambda = (L^{2r}H_X)^{e_{2r}} \cdot (L^{2r-1}H_X)^{e_{2r-1}} \cdots (LH_X)^{e_1} \cdot H_X^{e_0} = 0.
\]

\item \label{prop:thm3.2-2} \emph{Geometric positivity:} One has $w_{\kappa(\bt)} = (L^{2r}H_X)^{t_{r}} \cdot (L^{2r-2}H_X)^{t_{r-1}} \cdots H_X^{t_0} > 0$.

\item \label{prop:thm3.2-3} \emph{Boundedness:} The integers $t_j$ satisfy that
\begin{equation*}
r(r+1) \le \sum_{j=0}^r 2j  t_j \le rd.
\end{equation*}
\end{enumerate}
\end{proposition}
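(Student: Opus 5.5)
We construct $\bt$ by the greedy procedure of \cite[Theorem~3.2]{HJ25}, using the ``even'' divisors $L^{2j}H_X$ as building blocks. The key input is a positivity fact: for $0\le j\le r$ the class $L^{2j}H_X$ is a nonzero limit of nef classes up to sign. Concretely, since $f^*=\exp(L)$ is unipotent with $L^{2r}\ne 0$, we have
\[
\lim_{m\to\infty}\frac{(2r)!}{m^{2r}}(f^m)^*H_X=L^{2r}H_X,
\]
so $L^{2r}H_X$ is nef and nonzero. We then work inductively on restrictions. Choose $t_r\ge1$ maximal with $(L^{2r}H_X)^{t_r}\not\equiv 0$ as a cycle class, intersect with it, and repeat with the next even power on the resulting cycle. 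To do this we use nefness of suitably normalized limits of $(f^m)^*H_X$ restricted to the cycle $(L^{2r}H_X)^{t_r}$. On that cycle the effective nilpotency index drops, and the leading term of $(f^m)^*H_X$ restricted there is $c\,m^{2j}L^{2j}H_X$ for the next relevant even $j$; this is exactly the argument of \cite[Theorem~3.2]{HJ25}.

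Replacing $N$ by $L$ is harmless because $N=L+L^2/2+\cdots$ is a unipotent upper-triangular change, so $L^iH_X\equiv N^iH_X$ modulo $\operatorname{span}\{N^{i'}H_X\}_{i'>i}$. Hence every statement about products in lexicographically maximal position transfers between the $N$-version and the $L$-version. Accordingly, I would first state the $N$-version of (1) and (2) from \cite[Theorem~3.2]{HJ25} verbatim, then expand each $L^{\lambda_i}H_X$ in the $N^{\mu}H_X$ basis. The expansion gives $w_\lambda=\sum_{\mu}c_{\lambda\mu}v_\mu$, where the sum runs over $\mu$ obtained by raising entries of $\lambda$, so each such $\mu$ satisfies $\mu\succeq\lambda$, with $c_{\lambda\lambda}=1$. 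If $\lambda\succ\kappa(\bt)$, then every $\mu$ occurring has $\mu\succ\kappa(\bt)$, and $v_\mu=0$ by the $N$-vanishing. This gives (1). For (2), the same expansion gives $w_{\kappa}=v_\kappa+\sum_{\mu\succ\kappa}c\,v_\mu=v_\kappa>0$.

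One subtlety is that reordering a raised tuple can reduce its lexicographic position. Raising entries componentwise and then sorting always produces a partition that is $\succeq$ the sorted original, since the sorted vector dominates entrywise. So the step above is safe, but I will check it carefully.

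For (3), the upper bound $\sum 2jt_j\le rd$ follows from (2) combined with \cref{prop:vanishing-after-kd/2}, since $w_\kappa\neq0$ forces $|\kappa|\le dk/2=rd$. The lower bound $r(r+1)$ follows because all $t_j\ge1$, so $\sum_j 2jt_j\ge\sum_{j=0}^r 2j=r(r+1)$.

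The main obstacle is justifying that every $t_j$ is positive, in particular $t_0\ge1$ and no intermediate even power is skipped. This rests on the geometric input from \cite{HJ25}: on the cycle cut out by higher powers, the restricted growth still has Jordan block structure decreasing by exactly $2$ at each step, which ultimately comes from Hodge-index type log-concavity. I would cite that directly rather than reprove it.
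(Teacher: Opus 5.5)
Your proposal is correct, but the step that actually needs an argument here (transferring the $N$-statements of \cite[Theorem~3.2]{HJ25} to $L$) is done differently from the paper.

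The paper proves a cycle-level comparison by induction on $j$: $M_j^{(L)}\cdot (L^sH_X)^i \wnum M_j^{(N)}\cdot (N^sH_X)^i$ for all $s\ge 2r-2j$. At each step it kills the non-leading terms of the expansion using \cite[Theorem~3.2(4)]{HJ25}. It then reads off (2) directly. It obtains (1) by taking the first index $m$ with $\lambda_m>\kappa(\bt)_m$: the first $m-1$ parts of $\lambda$ assemble to $M_{r-\ell}^{(L)}$, while $\lambda_m\ge 2\ell+1$.

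You instead stay at the level of intersection numbers. Expanding each $L^{\lambda_a}H_X$ unitriangularly in the $N^iH_X$ gives $w_\lambda=v_\lambda+\sum_{\mu\succ\lambda}c_{\lambda\mu}v_\mu$. Your sorting remark is exactly what is needed: an entrywise larger vector sorts to an entrywise larger partition, and strictly larger, since the sum grows. Because $\{\mu:\mu\succ\kappa(\bt)\}$ is closed upward, (1) and (2) for $L$ then follow formally from their $N$-analogues. This avoids the inductive comparison of cycles and makes transparent why the lexicographic order is the natural one.

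Two points should be made explicit.
\begin{itemize}
\item The lexicographic form of (1) for $N$ is not literally stated in \cite{HJ25}. You must derive $v_\mu=0$ for all $\mu\succ\kappa(\bt)$ from \cite[Theorem~3.2(4)]{HJ25} by the same first-differing-index argument the paper uses at the end of its proof, with $N$ in place of $L$.
\item Your first paragraph (a direct greedy construction with $L$) is superfluous and imprecise as written. You only need the $\bt$ already produced by \cite{HJ25}, exactly as the paper uses it.
\end{itemize}

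In exchange for the extra work, the paper's route yields the stronger cycle-level statement that all of \cite[Theorem~3.2]{HJ25} holds verbatim with $L$ in place of $N$. Your treatment of (3) coincides with the paper's.
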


\begin{proof}
Let $t_1,\ldots,t_r$ be the positive integers given by \cite[Theorem~3.2]{HJ25}, and set $t_0 \coloneqq d-\sum_{j=1}^{r} t_j \in \bZ_{>0}$.
Define $M_0^{(L)} \coloneqq [X] \eqqcolon M_0^{(N)}$, and for each $1\le j\le r$, let
\begin{align*}
M_j^{(L)} &\coloneqq (L^{2r}H_X)^{t_r} \cdot (L^{2r-2}H_X)^{t_{r-1}}\cdots (L^{2r-2j+2}H_X)^{t_{r-j+1}},\\
M_j^{(N)} &\coloneqq (N^{2r}H_X)^{t_r} \cdot (N^{2r-2}H_X)^{t_{r-1}}\cdots (N^{2r-2j+2}H_X)^{t_{r-j+1}}.
\end{align*}
Here $M_j^{(N)}$ is denoted by $M_j$ in \cite[Theorem~3.2]{HJ25}.
Recall that \[L \coloneqq \log(f^*|_{\N^1(X)_\bR}) = \sum_{i=1}^{k} \frac{(-1)^{i+1}}{i} N^i.\]
So, for each $m$ with $1\le m\le 2r$, we have
\[
L^m H_X \num N^m H_X + \sum_{n\ge m+1} c_{m,n} \, N^n H_X,
\]
for suitable constants $c_{m,n}\in\bQ$.
This leads to the following comparison.

\begin{claim}
\label{claim:comparison}
For every $0\le j\le r$, one has
\begin{equation*}
M_j^{(L)}\cdot (L^{s}H_X)^i \wnum M_j^{(N)}\cdot (N^{s}H_X)^i, \quad \text{for all } s\geq 2r-2j \text{ and } i\ge 0,
\end{equation*}
where we refer to \cite[Definition~2.4]{HJ25} for the notion of weak numerical equivalence $\wnum$.
\end{claim}
\begin{proof}[Proof of \cref{claim:comparison}]
We prove this claim by induction on $j$.
The case when $j=0$ is clear, since $L^{2r}H_X \num N^{2r}H_X$ and $L^{s}H_X \num N^{s}H_X \num 0$ for all $s>2r$.
Assume now that \cref{claim:comparison} holds for some $j$ with $0\le j\le r-1$.
Then by taking $s=2r-2j$ and $i=t_{r-j}$, we readily have that
\[
M_{j+1}^{(L)} = M_j^{(L)}\cdot (L^{2r-2j}H_X)^{t_{r-j}} \wnum M_j^{(N)}\cdot (N^{2r-2j}H_X)^{t_{r-j}}= M_{j+1}^{(N)}.
\]

Now let $s\geq 2r-2j-2$ and $i>0$. 
We have
\begin{align*}
M_{j+1}^{(L)}\cdot (L^{s}H_X)^i
&\wnum M_{j+1}^{(N)}\cdot (L^{s}H_X)^i
\wnum M_{j+1}^{(N)}\cdot \Biggl(N^{s}H_X + \sum_{n\ge s+1} c_{s,n} \, N^n H_X \Biggr)^{\! i}.
\end{align*}
After expanding the right-hand side, every term except $M_{j+1}^{(N)}\cdot (N^{s}H_X)^{i}$ contains at least one factor $M_{j+1}^{(N)} \cdot (N^nH_X)$ with $n\ge s+1\geq  2r-2j-1$. By \cite[Theorem~3.2(4)]{HJ25}, every such term is weakly numerically trivial. Hence we have 
\[
M_{j+1}^{(L)}\cdot (L^{s}H_X)^i \wnum M_{j+1}^{(N)}\cdot (N^{s}H_X)^i.
\]
This completes the proof of the comparison \cref{claim:comparison} by induction.
\renewcommand{\qedsymbol}{}
\end{proof}

Now the geometric positivity assertion~\ref{prop:thm3.2-2} for $w_{\kappa(\bt)}$ readily follows from the corresponding assertion for $N$ in \cite[Theorem~3.2(6)]{HJ25} or \cite[Theorem~1.5]{HJ25}, together with the comparison \cref{claim:comparison}.
Combining this with \cref{prop:vanishing-after-kd/2} immediately yields the boundedness assertion~\ref{prop:thm3.2-3}.

It remains to show the vanishing assertion~\ref{prop:thm3.2-1}.
First, by \cite[Theorem~3.2(4)]{HJ25} and \cref{claim:comparison}, for each $0\le j\le r$, we have
\begin{align}\label{eq:MLsH=0}
M_{j}^{(L)}\cdot L^s H_X \wnum M_{j}^{(N)}\cdot N^s H_X \wnum 0 \quad\text{for all } s \ge 2r-2j+1.
\end{align}
Now fix $\lambda = ((2r)^{e_{2r}},(2r-1)^{e_{2r-1}},\ldots,1^{e_1},0^{e_0}) \in P(2r,d,n)$ with $\lambda \succ \kappa(\bt)$.
Let $m$ be the first index such that
$\lambda_m > (\kappa(\bt))_m$.
Then by definition, $(\kappa(\bt))_m=2\ell$ for some $0\le \ell\le r-1$. Hence the first $m-1$ parts of $\lambda$ are precisely
\[
(2r)^{t_r},(2r-2)^{t_{r-1}},\ldots,(2\ell+2)^{t_{\ell+1}},
\]
i.e., $e_{2i}=t_i$ for all $r\ge i\ge \ell+1$ and $e_{2i+1}=0$ for all $r-1 \ge i\ge \ell+1$.
Moreover, we necessarily have that $\lambda_m = 2\ell+1$ or $2\ell+2$.
Taking $j=r-\ell$, we have
\[
M_{j}^{(L)}
=(L^{2r}H_X)^{t_r} \cdot (L^{2r-2}H_X)^{t_{r-1}}\cdots (L^{2\ell+2}H_X)^{t_{\ell+1}},
\]
and $2r-2j+1=2\ell+1$.
It thus follows from \cref{eq:MLsH=0} that
\[
M_{j}^{(L)}\cdot L^{\lambda_m}H_X\wnum 0.
\]
Multiplying by the remaining divisor factors appearing in $w_\lambda$, we obtain that $w_\lambda=0$.
Thus $\lambda\succ \kappa(\bt)$ implies that $w_\lambda=0$, proving \ref{prop:thm3.2-1}.
We conclude the proof of \cref{prop:thm3.2}.
\end{proof}

\begin{remark}\label{rmk:unique-bt}
In \cref{prop:thm3.2}, it is easy to see that the $(r+1)$-tuple 
$\bt\in \bZ_{>0}^{r+1}$ satisfying both properties~\ref{prop:thm3.2-1} and \ref{prop:thm3.2-2} is unique. 
Indeed, properties~\ref{prop:thm3.2-1} and \ref{prop:thm3.2-2} guarantee that $\kappa(\bt)$ is exactly the lexicographically largest element in the set
\[
\bigl\{\kappa(\bt'): w_{\kappa(\bt')}\neq 0, \, \bt'\in \bZ_{>0}^{r+1}, \, |\bt'|=d \bigr\}.
\]
Although we will not use this here, \cref{claim:comparison} shows that all assertions of \cite[Theorem~3.2]{HJ25} remain valid verbatim after replacing $N$ by $L$.
\end{remark}

\section{Truncated weighted incidence matrices}
\label{sec:truncated}

When studying intersection numbers associated with partitions, the vanishing property in \cref{prop:thm3.2}\ref{prop:thm3.2-1} will allow us to disregard those partitions that are lexicographically larger than a certain distinguished partition $\kappa$. 
Accordingly, in \Cref{subsec:truncated-matrices}, we introduce truncated partition sets and truncated matrices with respect to such distinguished partitions.

In \Cref{subsec:truncated-nullity}, we prove the main result of this section; see \cref{thm:nullity-truncated-matrix2}.
It establishes, by induction, that the truncated matrix $A_{2d-4,d,(d-1)(d-2)+e}^{\preceq \kappa}$ has trivial nullspace for each $e$ that is not too small.
This is a key ingredient for obtaining the desired upper bound in \cref{thm:k2d-4}.

\subsection{Truncated matrices with respect to distinguished partitions}
\label{subsec:truncated-matrices}

\begin{definition}[Truncated matrices $A_{2r,d,n}^{\preceq \kappa(\bt)}$]
\label{def:truncated-matrix}
Fix positive integers $d$, $r$ with $1\le r\le d-1$, and $n$ with $1\le n \le 2rd$. 
For a vector $\bt = (t_0,t_1,\ldots,t_r)\in \bZ_{>0}^{r+1}$ with $\sum_{j=0}^r t_j=d$,
denote by 
\[
\kappa(\bt) \coloneqq \bigl((2r)^{t_r},(2r-2)^{t_{r-1}},\ldots,2^{t_1},0^{t_0}\bigr) \in P\Bigl(2r,d,\sum_{j=0}^r 2jt_j\Bigr),
\]
which is a distinguished restricted partition associated with $\bt$.
We consider the following truncated set of restricted partitions:
\[
P(2r, d, n)^{\preceq \kappa(\bt)} \coloneqq \bigl\{ \lambda\in P(2r, d, n) \,:\, \lambda\preceq \kappa(\bt)\bigr\},
\]
whose cardinality is denoted by $p(2r, d, n)^{\preceq \kappa(\bt)}$.

Next, we define $A_{2r,d,n}^{\preceq \kappa(\bt)}$ to be the submatrix of $A_{2r,d,n}$ (see \cref{def:Akdn}) indexed by
\[
P(2r, d, n-1)\times P(2r, d, n)^{\preceq \kappa(\bt)},
\]
and call it the \textit{truncated matrix} of $A_{2r,d,n}$ with respect to $\kappa(\bt)$.
In other words, $A_{2r,d,n}^{\preceq \kappa(\bt)}$ is the matrix obtained from $A_{2r,d,n}$ by deleting all columns indexed by restricted partitions that are lexicographically larger than $\kappa(\bt)$. 
\end{definition}

We first prepare some general results on truncated matrices. We will use them in \Cref{subsec:truncated-nullity} to determine the nullity of certain truncated matrices that arise naturally, from an intersection-theoretic viewpoint, in the proof of \cref{thm:k2d-4}.

For $\bt = (t_0,t_1,\ldots,t_r)\in \bZ_{>0}^{r+1}$, define
\[
\bt^-=
\begin{cases}
    (t_0,t_1,\ldots,t_{r-1})\in \bZ_{>0}^{r}, & \text{if } t_r=1;\\
     (t_0,t_1,\ldots,t_{r-1}, t_r-1)\in \bZ_{>0}^{r+1}, & \text{if } t_r>1.
\end{cases}
\]
Accordingly, the corresponding restricted partitions satisfy that $\kappa(\bt) = \iota_{2r}(\kappa(\bt^-))$.

\begin{lemma}
\label{lemma:P2rd-trun-intersection}
Fix $\bt = (t_0,t_1,\ldots,t_r)\in \bZ_{>0}^{r+1}$ with $\sum_{i=0}^r t_i =d$.
Then the truncated set $P(2r, d, n)^{\preceq \kappa(\bt)}$ satisfies the following intersection properties:
\begin{enumerate}[label=\emph{(\arabic*)}, ref=(\arabic*)]
\item \label{lemma:P2rd-trun-intersection-1} For any $k'<2r$, one has
\[
P(2r, d, n)^{\preceq \kappa(\bt)} \cap P(k', d, n) = P(k', d, n).
\]

\item \label{lemma:P2rd-trun-intersection-2} \vspace{5pt}
\hfil $\begin{aligned}[t]
&P(2r, d, n)^{\preceq \kappa(\bt)} \cap \iota_{2r}(P(2r, d-1, n-2r)) \\
={}& \begin{cases}
\iota_{2r}(P(2r-2, d-1, n-2r)^{\preceq \kappa(\bt^-)}), &\text{if } t_r=1;\\
\iota_{2r}(P(2r, d-1, n-2r)^{\preceq \kappa(\bt^-)}), &\text{if } t_r>1.
\end{cases}
\end{aligned}$
\end{enumerate}
\end{lemma}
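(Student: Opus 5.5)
Both parts come from unwinding the lexicographic order against the first entry of $\kappa(\bt)$, which is $2r$ because $t_r\ge 1$. The argument is purely set-theoretic, so I would prove each equality by double inclusion.

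For \ref{lemma:P2rd-trun-intersection-1}, take $\lambda\in P(k',d,n)$ with $k'<2r$. Then $\lambda_1\le k'<2r=\kappa(\bt)_1$. Hence the leftmost nonzero entry of $\lambda-\kappa(\bt)$ is the first one, and it is negative, so $\lambda\prec\kappa(\bt)$. Since also $P(k',d,n)\subseteq P(2r,d,n)$, we get $P(k',d,n)\subseteq P(2r,d,n)^{\preceq\kappa(\bt)}$. The reverse inclusion is trivial, which proves the equality.

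For \ref{lemma:P2rd-trun-intersection-2}, write an element of the left-hand side as $\lambda=\iota_{2r}(\nu)=(2r,\nu)$ with $\nu\in P(2r,d-1,n-2r)$. We have $\kappa(\bt)=\iota_{2r}(\kappa(\bt^-))$. Because the first entries agree, the lexicographic comparison of $(2r,\nu)$ with $(2r,\kappa(\bt^-))$ reduces to comparing $\nu$ with $\kappa(\bt^-)$ in $\bN^{d-1}$. So $\lambda\preceq\kappa(\bt)$ if and only if $\nu\preceq\kappa(\bt^-)$.

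When $t_r>1$, $\bt^-$ still has $r+1$ positive entries summing to $d-1$. Then $\kappa(\bt^-)\in P(2r,d-1,\cdot)$, and the equivalence above gives the second case directly.

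When $t_r=1$, $\bt^-=(t_0,\ldots,t_{r-1})$ and $\kappa(\bt^-)$ has first entry $2r-2$. Any $\nu\preceq\kappa(\bt^-)$ therefore has $\nu_1\le 2r-2$. Indeed, if $\nu_1>2r-2$, the first entry of $\nu-\kappa(\bt^-)$ would be positive, giving $\nu\succ\kappa(\bt^-)$. So $\nu$ lies in $P(2r-2,d-1,n-2r)^{\preceq\kappa(\bt^-)}$, where the truncation is now taken with respect to $\kappa(\bt^-)\in P(2(r-1),d-1,\cdot)$. Conversely, every element of this set lies in $P(2r,d-1,n-2r)$ and satisfies $\nu\preceq\kappa(\bt^-)$. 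Applying $\iota_{2r}$, which is a bijection onto its image, gives the first case.

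There is no real obstacle here. The only care needed is in the case $t_r=1$: the index $2r-2$ must match the definition of truncation in \cref{def:truncated-matrix}, which requires $r-1\ge 1$. The degenerate case $r=1$, $t_r=1$ gives $\bt^-=(t_0)$ and $\kappa(\bt^-)=(0^{d-1})$. There the statement reduces to $P(0,d-1,n-2)$, which is either empty or the single partition $0^{d-1}$, and it holds trivially.
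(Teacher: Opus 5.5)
Your proof is correct and follows the paper's argument essentially step for step. Part (1) compares first entries, $\lambda_1\le k'<2r=\kappa(\bt)_1$. Part (2) uses $\kappa(\bt)=\iota_{2r}(\kappa(\bt^-))$ to reduce $(2r,\nu)\preceq\kappa(\bt)$ to $\nu\preceq\kappa(\bt^-)$, and in the case $t_r=1$ notes that $\kappa(\bt^-)$ has largest part $2r-2$; your explicit reverse inclusions and the check of the degenerate case $r=1$ are harmless additions that the paper leaves implicit.
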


\begin{proof}
For \ref{lemma:P2rd-trun-intersection-1}, it suffices to show that $P(k', d, n)\subseteq P(2r, d, n)^{\preceq \kappa(\bt)}$ for $k'<2r$. Clearly, $P(k', d, n)\subseteq P(2r, d, n)$. Moreover, for any $\lambda \in P(k', d, n)$, the largest part of $\lambda$ is at most $k'<2r$, whereas the largest part of $\kappa(\bt)$ is $2r$.
Hence $\lambda \prec \kappa(\bt)$, and therefore $\lambda \in P(2r, d, n)^{\preceq \kappa(\bt)}$.
This proves \ref{lemma:P2rd-trun-intersection-1}.

For \ref{lemma:P2rd-trun-intersection-2}, 
take any $\lambda=(\lambda_1,\ldots, \lambda_{d-1})\in P(2r, d-1, n-2r)$.
Then $\iota_{2r}(\lambda)=(2r, \lambda)=(2r, \lambda_1,\ldots, \lambda_{d-1})$.
Since $\kappa(\bt)=(2r, \kappa(\bt^-)) = \iota_{2r}(\kappa(\bt^-))$, we have that
\[
\iota_{2r}(\lambda)\preceq \kappa(\bt) \iff \lambda \preceq \kappa(\bt^-).
\]
If $t_r=1$, then the largest part of $\kappa(\bt^-)$ is at most $2r-2$.
So $\lambda \preceq \kappa(\bt^-)$ implies that $\lambda_1\leq 2r-2$, i.e., $\lambda\in P(2r-2,d-1,n-2r)$.
This proves the first case of \ref{lemma:P2rd-trun-intersection-2}. The second case is analogous.
\end{proof}

\begin{lemma}
\label{lemma:P2rd-trun-decomposition}
Fix $\bt = (t_0,t_1,\ldots,t_r)\in \bZ_{>0}^{r+1}$ with $\sum_{i=0}^r t_i =d$.
Then the truncated set $P(2r, d, n)^{\preceq \kappa(\bt)}$ admits the following decomposition:
\[
P(2r, d, n)^{\preceq \kappa(\bt)} =\begin{cases}
    \iota_{2r}(P(2r-2, d-1, n-2r)^{\preceq \kappa(\bt^-)})\sqcup P(2r-1, d, n), & \text{if } t_r=1;\\
      \iota_{2r}(P(2r, d-1, n-2r)^{\preceq \kappa(\bt^-)})\sqcup P(2r-1, d, n), & \text{if } t_r>1.
\end{cases} 
\]
\end{lemma}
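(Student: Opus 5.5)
The plan is to intersect the decomposition \eqref{eq:decomposition-by-iota} with the truncated set and then identify each piece using \cref{lemma:P2rd-trun-intersection}. Taking $k=2r$ in \eqref{eq:decomposition-by-iota} (valid since $d\ge 2$) gives the disjoint union
\[
P(2r,d,n)=\iota_{2r}(P(2r,d-1,n-2r))\sqcup P(2r-1,d,n).
\]
Intersecting both sides with $P(2r,d,n)^{\preceq \kappa(\bt)}$, which is a subset of $P(2r,d,n)$, yields
\[
P(2r,d,n)^{\preceq \kappa(\bt)} = \Bigl(P(2r,d,n)^{\preceq \kappa(\bt)}\cap \iota_{2r}(P(2r,d-1,n-2r))\Bigr)\sqcup\Bigl(P(2r,d,n)^{\preceq \kappa(\bt)}\cap P(2r-1,d,n)\Bigr).
\]
Disjointness is inherited from the original decomposition.

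For the second piece, apply \cref{lemma:P2rd-trun-intersection}\ref{lemma:P2rd-trun-intersection-1} with $k'=2r-1<2r$. The intersection is all of $P(2r-1,d,n)$.

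For the first piece, apply \cref{lemma:P2rd-trun-intersection}\ref{lemma:P2rd-trun-intersection-2}. It gives $\iota_{2r}(P(2r-2,d-1,n-2r)^{\preceq\kappa(\bt^-)})$ when $t_r=1$, and $\iota_{2r}(P(2r,d-1,n-2r)^{\preceq\kappa(\bt^-)})$ when $t_r>1$. Substituting both pieces gives the two cases of the statement.

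The only step needing care is the degenerate bookkeeping. When $t_r=1$ and $r=1$, the tuple $\bt^-=(t_0)$ gives $\kappa(\bt^-)=(0^{d-1})$. Also, for small or large $n$, some of the sets may be empty (for instance when $n<2r$). In all of these cases both sides remain consistent, since \cref{lemma:P2rd-trun-intersection} makes no assumption on $n$ and empty sets cause no problem in the identities. So I expect no real obstacle: the statement is a direct combination of \eqref{eq:decomposition-by-iota} and \cref{lemma:P2rd-trun-intersection}.
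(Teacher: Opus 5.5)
Your proposal is correct and is essentially the paper's own proof: intersect the decomposition \eqref{eq:decomposition-by-iota} with $k=2r$ with the truncated set $P(2r,d,n)^{\preceq \kappa(\bt)}$, then identify the two pieces via parts \ref{lemma:P2rd-trun-intersection-1} (with $k'=2r-1$) and \ref{lemma:P2rd-trun-intersection-2} of \cref{lemma:P2rd-trun-intersection}. Your remarks on the degenerate cases ($r=1$, $n<2r$) are extra bookkeeping that the paper leaves implicit, and they are handled correctly.
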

\begin{proof}
Recall that the decomposition \eqref{eq:decomposition-by-iota} reads as follows:
\begin{align*}
P(2r, d, n)= \iota_{2r}(P(2r, d-1, n-2r))\sqcup P(2r-1, d, n).
\end{align*}
Intersecting both sides with $P(2r, d, n)^{\preceq \kappa(\bt)}$, we get the decomposition by \cref{lemma:P2rd-trun-intersection}.
\end{proof}

Under the natural decompositions of $P(2r, d, n-1)$ and $P(2r, d, n)^{\preceq \kappa(\bt)}$, the truncated matrix $A_{2r,d,n}^{\preceq \kappa(\bt)}$ has the following block lower-triangular form, inherited from that of $A_{2r,d,n}$; see \cref{lemma:Akdn-decomposition}.

\begin{lemma}
\label{lemma:Akdn-trun-decomposition}
Fix $\bt = (t_0,t_1,\ldots,t_r)\in \bZ_{>0}^{r+1}$ with $\sum_{i=0}^r t_i =d$. Then the truncated matrix $A_{2r,d,n}^{\preceq \kappa(\bt)}$ has the following block lower-triangular form:
\begin{enumerate}[label=\emph{(\arabic*)}, ref=(\arabic*)]
\item If $t_r=1$, then
\[
A_{2r,d,n}^{\preceq \kappa(\bt)}
=
\begin{pmatrix} \mathbf{0}& \mathbf{0}\\
A_{2r-2, d-1, n-2r}^{\preceq \kappa(\bt^-)} & \mathbf{0}\\
\ast & A_{2r-1, d, n}
\end{pmatrix}.
\]
 
\item If $t_r>1$, then
\[
A_{2r,d,n}^{\preceq \kappa(\bt)}
=
\begin{pmatrix} \mathbf{0}& \mathbf{0}\\
A_{2r, d-1, n-2r}^{\preceq \kappa(\bt^-)} & \mathbf{0}\\
\ast & A_{2r-1, d, n}
\end{pmatrix}.
\]
\end{enumerate}

\end{lemma}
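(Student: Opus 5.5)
We derive the block form by restricting the block lower-triangular form of the full matrix $A_{2r,d,n}$ from \cref{lemma:Akdn-decomposition} (with $k=2r$) to the truncated column set. The rows of $A^{\preceq\kappa(\bt)}_{2r,d,n}$ are indexed by all of $P(2r,d,n-1)$, split via \eqref{eq:decomposition-by-iota} as
\[
\iota_{2r}(P(2r,d-1,n-1-2r))\sqcup P(2r-1,d,n-1).
\]
The columns are indexed by $P(2r,d,n)^{\preceq\kappa(\bt)}$, split as in \cref{lemma:P2rd-trun-decomposition}. We further split the first row group into two parts. One part consists of those $\iota_{2r}(\nu)$ with $\nu$ lying in the ``truncated'' row set of the smaller matrix: $\nu\in P(2r-2,d-1,n-1-2r)$ when $t_r=1$, or $\nu\preceq\kappa(\bt^-)$ in the appropriate sense when $t_r>1$. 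The other part is the complement. We order rows so the complement comes first; this is the zero top row block.

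\textbf{Step 1 (bottom-right block).} For $\mu\in P(2r-1,d,n-1)$ and $\lambda\in P(2r-1,d,n)$, the $(\mu,\lambda)$ entry of $A_{2r,d,n}$ equals that of $A_{2r-1,d,n}$. This is exactly the argument in \cref{lemma:Akdn-decomposition}. By \cref{lemma:P2rd-trun-intersection}\ref{lemma:P2rd-trun-intersection-1}, no column of $P(2r-1,d,n)$ is truncated, so the bottom-right block is the full $A_{2r-1,d,n}$.

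\textbf{Step 2 (top-right zero).} A row $\mu=\iota_{2r}(\nu)$ has first part $2r$, while a column $\lambda\in P(2r-1,d,n)$ has first part at most $2r-1$. Hence $\lambda\ne\mu(i)$ for every $i$, and the whole upper-right part vanishes.

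\textbf{Step 3 (middle-left block and top-left zero).} For $\mu=\iota_{2r}(\nu)$ and $\lambda=\iota_{2r}(\nu')$, the entry equals the $(\nu,\nu')$ entry of $A_{2r,d-1,n-2r}$. This holds because the parts equal to $2r$ cannot be increased, so the multiplicities $e_i$ with $i<2r$ coincide.

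\emph{Case $t_r=1$.} The columns are $\nu'\in P(2r-2,d-1,n-2r)$, and all parts of each such $\nu'$ are at most $2r-2$. A nonzero entry requires $\nu'=\nu(i)$, which forces $\nu\in P(2r-2,d-1,n-2r-1)$; other $\nu$ give identically zero rows. Among the retained rows, the entry equals that of $A_{2r-2,d-1,n-2r}$: the multiplicity $e_i$ is computed the same way, since $\nu'$ has no part $2r-1$. Restricting to columns $\preceq\kappa(\bt^-)$ gives $A^{\preceq\kappa(\bt^-)}_{2r-2,d-1,n-2r}$, with $\kappa(\bt^-)$ having top part $2r-2$. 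The rows with $\nu\notin P(2r-2,d-1,n-2r-1)$ form the top zero row block.

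\emph{Case $t_r>1$.} The rows are all of $P(2r,d-1,n-1-2r)$, and the columns are exactly $P(2r,d-1,n-2r)^{\preceq\kappa(\bt^-)}$. This gives precisely $A^{\preceq\kappa(\bt^-)}_{2r,d-1,n-2r}$, and the top zero row block is empty.

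The lower-left $\ast$ block is whatever remains and needs no description.

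\textbf{Main obstacle.} The main subtlety is the row bookkeeping in the case $t_r=1$: we must show that the row indices of the smaller matrix $A_{2r-2,d-1,n-2r}$ embed correctly, and that the extra rows, namely those $\nu$ having a part equal to $2r-1$ or $2r$, give zero rows. This is the point where the zero top block arises. We also note the degenerate conventions for $n$ near $0$ or $2rd$ from the preceding remark, under which some blocks may be empty.
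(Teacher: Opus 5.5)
Your proposal is correct and is essentially the paper's proof. You restrict the block form of \cref{lemma:Akdn-decomposition} to the columns $P(2r,d,n)^{\preceq\kappa(\bt)}$ and keep $A_{2r-1,d,n}$ intact by \cref{lemma:P2rd-trun-intersection}. When $t_r=1$, you split the rows $\iota_{2r}(\nu)$ according to whether $\nu$ has a part exceeding $2r-2$ (zero rows) or lies in $P(2r-2,d-1,n-2r-1)$ (yielding $A^{\preceq\kappa(\bt^-)}_{2r-2,d-1,n-2r}$).

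The only slip is in your preamble, which suggests that for $t_r>1$ the rows are cut down by $\nu\preceq\kappa(\bt^-)$. Truncation deletes only columns, and your actual case $t_r>1$ correctly uses all of $P(2r,d-1,n-2r-1)$ with an empty top block.
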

\begin{proof}
We prove only the case when $t_r=1$, since the other case is analogous.
By \cref{lemma:Akdn-decomposition}, the matrix $A_{2r,d,n}$ has the following block lower-triangular form
\[
A_{2r,d,n} =
\begin{pmatrix}
A_{2r,d-1,n-2r} & \mathbf{0}\\
\ast & A_{2r-1,d,n}
\end{pmatrix}.
\]
Recall that the rows of both $A_{2r,d,n}$ and $A_{2r,d,n}^{\preceq \kappa(\bt)}$ are indexed by
\[
P(2r, d, n-1) = \iota_{2r}(P(2r, d-1, n-2r-1))\sqcup P(2r-1, d, n-1).
\]
Further, by \cref{lemma:P2rd-trun-decomposition}, the columns of $A_{2r,d,n}^{\preceq \kappa(\bt)}$ are indexed by
\[
P(2r, d, n)^{\preceq \kappa(\bt)} = \iota_{2r}(P(2r-2, d-1, n-2r)^{\preceq \kappa(\bt^-)})\sqcup P(2r-1, d, n).
\]
It is clear that the bottom-right block of $A_{2r,d,n}$ remains unchanged in $A_{2r,d,n}^{\preceq \kappa(\bt)}$, where it is naturally identified with $A_{2r-1,d,n}$.

Now let $\mu\in P(2r, d-1, n-2r-1)$ and $\lambda\in P(2r-2, d-1, n-2r)^{\preceq \kappa(\bt^-)}$ be arbitrary restricted partitions.
Then
\[
(2r,\mu) \in \iota_{2r}(P(2r, d-1, n-2r-1)) \ \  \text{and} \ \  (2r,\lambda)\in \iota_{2r}(P(2r-2, d-1, n-2r)^{\preceq \kappa(\bt^-)})
\]
index a row and a column of $A_{2r,d,n}^{\preceq \kappa(\bt)}$, respectively.
If the largest part of $\mu$ exceeds $2r-2$, then no partition of the form $(2r, \lambda)$ with $\lambda \preceq \kappa(\bt^-)$ can be obtained from $(2r, \mu)$ by increasing a single part by $1$, since the largest part of $\lambda$ is at most $2r-2$. The corresponding entries of $A_{2r,d,n}^{\preceq \kappa(\bt)}$ therefore vanish, which gives the top-left zero block.
If instead the largest part of $\mu$ is at most $2r-2$, then $\mu \in P(2r-2, d-1, n-2r-1)$, and the $((2r,\mu), (2r,\lambda))$-entry of $A_{2r,d,n}^{\preceq \kappa(\bt)}$ coincides with the $(\mu, \lambda)$-entry of $A_{2r-2, d-1, n-2r}^{\preceq \kappa(\bt^-)}$. This yields the middle-left block.
\end{proof}

For an arbitrary matrix $M$, the nullity of $M$ is the dimension of the kernel of the linear map represented by $M$, which equals the number of columns of $M$ minus the rank of $M$.

\begin{lemma}
\label{lemma:nullity-lower-triangular}
Let
\[
M=\begin{pNiceArray}{cc}[first-row,last-col]
\Hbrace{1}{n_1} & \Hbrace{1}{n_2} \\
A &  \mathbf{0}  & \Vbrace{1}{m_1} \\[2pt]
B & C & \Vbrace{1}{m_2} \\
\end{pNiceArray}
\]
be a block lower-triangular matrix over $\bR$. Here possibly $m_1=0$ or $n_2=0$.
Then
\begin{equation}
\label{eq:nullity-formula}
\nullity(M) = \dim_\bR \bigl\{x\in \nullspace(A) \,:\, Bx\in \range(C) \bigr\}+\nullity(C).
\end{equation}
In particular, the following hold.
\begin{enumerate}[label=\emph{(\arabic*)}, ref=(\arabic*)]
\item If $C$ has full row rank, then 
\[
\nullity(M)=\nullity(A)+\nullity(C).
\]

\item If $C$ has full column rank, then 
\[
\nullity(M) \leq \nullity(A).
\]
\end{enumerate}
\end{lemma}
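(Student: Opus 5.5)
The plan is to describe the kernel of $M$ directly in terms of the blocks, and then read off the two special cases. Write a vector in $\bR^{n_1+n_2}$ as $(x,y)$ with $x\in\bR^{n_1}$ and $y\in\bR^{n_2}$. Then $M(x,y)^{\mathsf T}=0$ is equivalent to the pair of conditions $Ax=0$ and $Bx+Cy=0$.

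Consider the linear projection $\pi\colon \nullspace(M)\to\bR^{n_1}$, $(x,y)\mapsto x$. First we identify its image. A vector $x$ lies in the image of $\pi$ if and only if $Ax=0$ and there is some $y$ with $Cy=-Bx$, that is, if and only if $x\in\nullspace(A)$ and $Bx\in\range(C)$. Next we identify its kernel. It consists of the pairs $(0,y)$ with $Cy=0$, so it is isomorphic to $\nullspace(C)$. Rank--nullity for $\pi$ now gives \eqref{eq:nullity-formula}.

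For (1), suppose $C$ has full row rank. Then $\range(C)=\bR^{m_2}$, so the condition $Bx\in\range(C)$ is automatic. The first summand in \eqref{eq:nullity-formula} becomes $\nullity(A)$.

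For (2), suppose $C$ has full column rank. Then $\nullity(C)=0$. The first summand is the dimension of a subspace of $\nullspace(A)$, so it is at most $\nullity(A)$.

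The degenerate cases $m_1=0$ or $n_2=0$ need a convention. When $m_1=0$ we read $\nullspace(A)=\bR^{n_1}$, and when $n_2=0$ we read $\range(C)=0$ and $\nullity(C)=0$. With these conventions the same argument applies verbatim. There is no real obstacle; the only care needed is in these empty-block conventions.
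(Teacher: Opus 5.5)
Your proposal is correct and follows essentially the same route as the paper: split $\nullspace(M)$ into the conditions $Ax=0$ and $Cy=-Bx$, observe that $x$ admits a solution $y$ if and only if $Bx\in\range(C)$, and note that the fibres are translates of $\nullspace(C)$. Your rank--nullity argument for the projection $(x,y)\mapsto x$ is just a cleaner formalization of the paper's count of ``$\nullity(C)$ free parameters.'' Your degenerate-block ``conventions'' are in fact automatic for empty matrices.
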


\begin{proof}
Let $\binom{x}{y}$ be an arbitrary column vector in the domain of $M$, with $x\in \bR^{n_1\times 1}$ and $y\in \bR^{n_2\times 1}$.
Then
\[
M \binom{x}{y} = \binom{Ax}{Bx+Cy}.
\]
Thus $\binom{x}{y}\in \nullspace(M)$ if and only if $Ax=\mathbf{0}$ and $Cy=-Bx$.
For a given $x\in \nullspace(A)$, the second condition is solvable in $y$ if and only if $-Bx\in \range(C)$, or equivalently, $Bx\in \range(C)$.
Moreover, the solutions $y$ of $Cy=-Bx$ form an affine translate of
$\nullspace(C)$, so they contribute $\nullity(C)$ free parameters.
The nullity formula \cref{eq:nullity-formula} follows immediately.

If $C$ has full row rank, then $\range(C)=\bR^{m_2\times 1}$, so
\[
\nullity(M)=\nullity(A)+\nullity(C).
\]
 
If $C$ has full column rank, then $\nullity(C)=0$, and
\[
\nullity(M)=\dim_\bR \bigl\{x\in \nullspace(A) \,:\, Bx\in \range(C) \bigr\} 
\leq \nullity(A). \qedhere
\]
\end{proof}

\subsection{The nullity of truncated matrices}
\label{subsec:truncated-nullity}

We prove a technical theorem on the nullity of truncated matrices $A_{2d-4,d,(d-1)(d-2)+e}$.
In the Euclidean space $\bR^{d-1}$, let $\be_i$ denote the $i$-th standard basis vector for $1\le i\le d-1$ and $\mathbf{1}_{d-1}=(1,1,\ldots, 1)$.

\begin{theorem}\label{thm:nullity-truncated-matrix2}
For each $d\ge 3$ and $0\le \ell\le d-2$, 
denote by $\bt_{d-2, \ell}=\mathbf{1}_{d-1}+\be_{\ell+1}$ so that
\[
\kappa(\bt_{d-2, \ell})=(2d-4,2d-6,\ldots,2\ell,2\ell,\ldots,2,0) \in P(2d-4,d,(d-1)(d-2)+2\ell).
\]
Then, for every integer
\[
e\ge \max\bigl\{\,\lfloor d/2\rfloor-1, \, \ell+1 \,\bigr\},
\]
we have
\[
\nullity\bigl(A_{2d-4,d,(d-1)(d-2)+e}^{\preceq \kappa(\bt_{d-2, \ell})}\bigr)=0.
\] 
\end{theorem}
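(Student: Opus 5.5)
We argue by induction on $d$, using the block lower-triangular form of \cref{lemma:Akdn-trun-decomposition} together with the nullity estimates of \cref{lemma:nullity-lower-triangular}. Write $n=(d-1)(d-2)+e$ and $k=2d-4$, so $r=d-2$. For $\bt=\bt_{d-2,\ell}$ we have $t_r=1$ when $\ell<d-2$, and $t_r=2$ when $\ell=d-2$.

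\emph{The case $\ell<d-2$.} Here $\bt^-=\bt_{d-3,\ell}$, viewed in dimension $d-1$. Hence \cref{lemma:Akdn-trun-decomposition} gives
\[
A_{2d-4,d,n}^{\preceq\kappa(\bt)}=\begin{pmatrix}\mathbf{0}&\mathbf{0}\\ A_{2d-6,d-1,n-(2d-4)}^{\preceq\kappa(\bt_{d-3,\ell})}&\mathbf{0}\\ \ast & A_{2d-5,d,n}\end{pmatrix}.
\]
Since $n-(2d-4)=(d-2)(d-3)+e$, the top-left block is exactly the truncated matrix from the statement with $d$ replaced by $d-1$ and the same $e,\ell$. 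The hypothesis $e\ge\max\{\lfloor (d-1)/2\rfloor-1,\ell+1\}$ follows from the one for $d$, and $\ell\le d-3$ is admissible. By induction this block has trivial nullspace. For the bottom-right block $A_{2d-5,d,n}$, \cref{thm:full-rank} shows it has full column rank as soon as $n>d(2d-5)/2$. That is,
\[
(d-1)(d-2)+e>d^2-\tfrac52 d,\quad\text{i.e.}\quad e>\tfrac d2-2.
\]
This holds because $e\ge\lfloor d/2\rfloor-1$, which is presumably where this bound on $e$ comes from. Then \cref{lemma:nullity-lower-triangular}(2) gives nullity at most $\nullity$ of the upper part, which is $0$. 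The top zero rows do not affect the nullity.

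\emph{The case $\ell=d-2$.} Here $\kappa=(2d-4,2d-4,2d-6,\dots,2,0)$ and $\bt^-=\bt_{d-3,d-3}+\dots$; concretely, $\kappa(\bt^-)=(2d-4,2d-6,\dots,2,0)$ in dimension $d-1$ with top part $2d-4$. The upper block is then $A_{2d-4,d-1,n-(2d-4)}^{\preceq\kappa(\bt^-)}$, whose bound $k$ no longer matches the pattern $2(d-1)-4$. I would handle it by decomposing once more. The columns $\lambda\preceq\kappa(\bt^-)$ with $\lambda_1=2d-4$ would need $\lambda\preceq(2d-4,2d-6,\dots)$, which forces a second decomposition. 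Since $\kappa(\bt^-)$ has a single part $2d-4$, this reduces to a block $A_{2d-6,d-2,\cdot}^{\preceq\kappa(\ldots)}$ plus a bottom block $A_{2d-5,d-1,\cdot}$. Full column rank of the bottom block is again a numerical check from \cref{thm:full-rank}, and this is where the condition $e\ge \ell+1=d-1$ enters.

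\emph{Base case and where it gets hard.} The base case $d=3$ (with $k=2$, $\ell\in\{0,1\}$) can be verified directly from the small matrices. Alternatively, if $n$ exceeds the range, the matrix has no columns. I expect the main obstacle to be the bookkeeping at the boundary, for two reasons. First, in the recursion the required lower bound on $e$ must be preserved at each step. Second, when the reduced truncated matrix falls outside the family $\bt_{d'-2,\ell'}$ (the $t_r=2$ case), one must track that the truncation reduces to a truncation of the same shape, or else to no truncation at all, after which \cref{thm:full-rank} applies whenever $n>dk/2$.
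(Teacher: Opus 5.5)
Your case $\ell<d-2$ is exactly the paper's inductive step. It uses the same block form from \cref{lemma:Akdn-trun-decomposition}, full column rank of $A_{2d-5,d,n}$ because $e\ge\lfloor d/2\rfloor-1>d/2-2$, and then \cref{lemma:nullity-lower-triangular}(2).

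The gap is the case $\ell=d-2$, and it is not a side issue. For fixed $\ell$, your recursion $(d,\ell)\to(d-1,\ell)\to\cdots$ terminates at $(\ell+2,\ell)$. So this case must be settled for every $d$, and your check at $d=3$ does not cover it. Your plan of decomposing once more fails:
\begin{itemize}
\item The second-level bottom block is $A_{2d-5,d-1,(d-2)(d-3)+e}$. Its full column rank needs $(d-2)(d-3)+e>(d-1)(2d-5)/2$, i.e.\ $e>(3d-7)/2$. At the minimal admissible value $e=d-1$ this is false once $d\ge 5$.
\item No iteration of the estimate $\nullity(M)\le\nullity(A)$ can succeed here. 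For $d=5$, $e=4$, the upper block of your first decomposition is $A_{6,4,10}^{\preceq(6,4,2,0)}$. Its sub-blocks $A_{4,3,4}$ (size $3\times 4$) and $A_{5,4,10}$ (size $11\times 12$) both have full row rank. So \cref{lemma:nullity-lower-triangular}(1) gives that upper block nullity $1+1=2$, and the bound only yields $\nullity\le 2$.
\end{itemize}

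The missing observation is that this case needs no decomposition at all. If $\ell=d-2$, then $e\ge d-1$, so
\[
n=(d-1)(d-2)+e\ge (d-1)^2>d(d-2)=dk/2 .
\]
By \cref{thm:full-rank}, the whole matrix $A_{2d-4,d,n}$ then has full column rank. Deleting columns preserves full column rank, so the truncated matrix has trivial nullspace. You mention this fallback in your last sentence but do not notice that it applies directly. This is the paper's base case, and with it your argument coincides with the paper's proof, which inducts on $r-\ell$ with $r=d-2$.
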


\begin{proof}
Write $r=d-2$ and fix an integer $\ell$ with $0\le \ell\le r$.
Set
\[
\kappa_{r,\ell}\coloneqq \kappa(\bt_{r,\ell})
= (2r,2r-2,\ldots,2\ell,2\ell,\ldots,2,0).
\]
Fix an integer $e$ such that $e\ge \max\bigl\{\, \lfloor r/2\rfloor, \, \ell+1 \,\bigr\}$.

We prove the theorem by induction on $r-\ell$.

If $\ell=r$, then the assumption $e\ge \ell+1$ becomes $e\ge r+1$.
Hence
\[
r(r+1)+e > \frac{2r(r+2)}{2}=r(r+2),
\]
so by \cref{thm:full-rank}, the matrix $A_{2r,r+2,r(r+1)+e}$ has full column rank.
Therefore, its submatrix $A_{2r,r+2,r(r+1)+e}^{\preceq \kappa_{r, \ell}}$, which has the same row set, also has full column rank. This proves the base case.

Now suppose that $0\le \ell<r$ and assume, by induction, that the statement has already been proved for $(r-1,\ell)$.
Then $\bt_{r, \ell}^-=\bt_{r-1, \ell}$, and by \cref{lemma:Akdn-trun-decomposition}, the truncated matrix $A_{2r,r+2,r(r+1)+e}^{\preceq \kappa_{r,\ell}}$ has the block form
\[
A_{2r,r+2,r(r+1)+e}^{\preceq \kappa_{r,\ell}}
=
\begin{pmatrix}
\mathbf{0} & \mathbf{0} \\
A_{2r-2,r+1,r(r-1)+e}^{\preceq \kappa_{r-1,\ell}} & \mathbf{0} \\
\ast & A_{2r-1,r+2,r(r+1)+e}
\end{pmatrix}.
\]
Since $e\ge \lfloor r/2\rfloor$, we have
\[
r(r+1)+e > \frac{(2r-1)(r+2)}{2},
\]
so \cref{thm:full-rank} implies that the bottom-right block $A_{2r-1,r+2,r(r+1)+e}$ has full column rank.
It thus follows from \cref{lemma:nullity-lower-triangular} that
\[
\nullity\bigl(A_{2r,r+2,r(r+1)+e}^{\preceq \kappa_{r,\ell}} \bigr) \le \nullity\bigl(A_{2r-2,r+1,r(r-1)+e}^{\preceq \kappa_{r-1,\ell}} \bigr).
\]
On the other hand, from the assumption on $e$ we have $e\ge \max \bigl\{\, \lfloor (r-1)/2\rfloor, \, \ell+1 \,\bigr\}$.
Therefore, $\nullity\bigl(A_{2r-2,r+1,r(r-1)+e}^{\preceq \kappa_{r-1,\ell}} \bigr)=0$ by the induction hypothesis, and hence
\[
\nullity\bigl(A_{2r,r+2,r(r+1)+e}^{\preceq \kappa_{r,\ell}} \bigr)=0.
\]
This completes the induction.
\end{proof}

\begin{remark}
\label{rmk:trun-A-table}
The lower bound on $e$ in \cref{thm:nullity-truncated-matrix2} is chosen so that our inductive argument works.
To illustrate, Table~\ref{table:nullity} below lists the nullity of truncated matrices $A_{2d-4,d,(d-1)(d-2)+e}^{\preceq \kappa(\bt_{d-2, 0})}$ with respect to the restricted partition $\kappa(\bt_{d-2, 0})=(2d-4,2d-6,\ldots,2,0,0)$, for $3\le d\le 9$ and $e\ge 0$.
Note that when $e$ is close to zero, the conclusion of \cref{thm:nullity-truncated-matrix2} is no longer true.

\begin{table}[htbp]
\setlength{\tabcolsep}{1.5em}
\renewcommand{\arraystretch}{1.2}
\caption{Nullity of the truncated matrix $A_{2d-4,d,(d-1)(d-2)+e}^{\preceq \kappa(\bt_{d-2, 0})}$}
\label{table:nullity}
\begin{tabular}{|c|c|c|c|c|c|c|}
\hline
\diagbox[width=6em]{$d$}{$e$} & $0$ & $1$ & $2$ & $3$ & $4$ & $5$ \\
\hline
$3$ & $1$ & $0$ & $0$ & $0$ & $0$ &  \\\hline
$4$ & $2$ & $0$ & $0$ & $0$ & $0$ & $0$ \\
\hline
$5$ & $3$ & $0$ & $0$ & $0$ & $0$ & $0$ \\
\hline
$6$ & $7$ & $0$ & $0$ & $0$ & $0$ & $0$ \\
\hline
$7$ & $17$ & $4$ & $0$ & $0$ & $0$ & $0$ \\
\hline
$8$ & $59$ & $21$ & $13$ & $0$ & $0$ & $0$ \\
\hline
$9$ & $216$ & $127$ & $64$ & $0$ & $0$ & $0$ \\
\hline
\end{tabular}
\end{table}
\end{remark}

\section{Proofs of Theorems \ref{thm:main1} and \ref{thm:k2d-4}, and Corollaries~\ref{cor:gap} and \ref{cor:explicit-values-lower-dim}}
\label{sec:proof}

\begin{proof}[Proof of \cref{thm:main1}]
After replacing $f$ by an iterate (see \cite[Lemma~2.6]{Hu-GK-AV}), we may work under the notation and assumptions of \cref{assum:XfN}.
Denote the associated dynamical intersection polynomial by $\Phi_{f, H_X}(\bx)$; see \cref{def:Phi}.
It follows readily from \cref{thm:plov=d+max} and \cref{prop:thm3.2}\ref{prop:thm3.2-2} that
\begin{align}\label{eq:main1}
\plov(f) \ge d + |\kappa(\bt)| = d + \sum_{j=1}^r2jt_j \ge d + r(r+1) = d + \frac{k(k+2)}{4},
\end{align}
where the second inequality holds since each $t_j\ge 1$.

The optimality of the inequality \eqref{eq:main1} follows from \cref{lem:possible-plov}.
Indeed, write $r=k/2$ and choose
\[
\mu=(r+1,1^{d-r-1},0^{r})\in P(d,d,d).
\]
Then
\[
\sum_{i=1}^d \mu_i^2 = (r+1)^2 + d-r-1 = d+r(r+1) = d+k(k+2)/4.
\]
Thus the lower bound is realizable by an abelian variety of dimension $d$.

For the last equivalence statement, recall that $k$ is even with $k\leq 2d-2$ and $\plov(f)\leq (k/2+1)d$ by \cref{thm:upper-bounds}.
Combining with \cref{eq:main1}, we get that $\plov(f) =  d^2$ if and only if $k=2d-2$.
\end{proof}

\begin{proof}[Proof of \cref{thm:k2d-4}]
After replacing $f$ by an iterate (see \cite[Lemma~2.6]{Hu-GK-AV}), we may work under the notation and assumptions of \cref{assum:XfN}.
Recall that $k=2r\le 2d-4$ by assumption.
If $r\le d-3$, then \cref{thm:upper-bounds} immediately gives
\[\plov(f) \le (r+1)d \le d(d-2).\]
So, we only need to consider the case when $r=d-2$.
Let $t_0,t_1,\ldots,t_r$ be the positive integers given by \cref{prop:thm3.2}, so that $\sum_{j=0}^r t_j = d$.
Since each $t_j$ is positive and $r+1=d-1$, it follows that exactly one of the $t_j$ is equal to $2$, while all the others are equal to $1$. Thus,
\[
\bt \coloneqq (t_0,t_1,\ldots,t_r) = (1,\ldots,1,2,1,\ldots,1),
\]
where $t_{j_\circ}=2$ for some $j_\circ \in \bZ \cap [0,r]$.
The corresponding restricted partition $\kappa(\bt)$ is
\[
\kappa(\bt) \coloneqq (2r,2r-2,\ldots,2j_\circ,2j_\circ,\ldots,2,0) \in P\bigl(2r,r+2,r(r+1)+2j_\circ\bigr).
\]

\begin{claim}\label{claim:lower-bound-on-e}
One has $w_\lambda=0$ for all $\lambda\in P(2r,r+2,r(r+1)+e)$ and all integers
\[
e\ge \max\bigl\{\, \lfloor r/2\rfloor, \, j_\circ+1 \,\bigr\}.
\]
\end{claim}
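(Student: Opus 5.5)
Fix an integer $e\ge \max\{\lfloor r/2\rfloor, j_\circ+1\}$ and set $n\coloneqq r(r+1)+e$, with $d=r+2$. If $n>2rd$ the set $P(2r,d,n)$ is empty and there is nothing to prove. So we may assume $1\le n\le 2rd$, and the plan is to combine the geometric vanishing with the linear relations and the nullity computation of \Cref{sec:truncated}.

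The first step removes the large partitions. By \cref{prop:thm3.2}\ref{prop:thm3.2-1}, applied with the tuple $\bt$ just determined, every $\lambda\in P(2r,d,n)$ with $\lambda\succ\kappa(\bt)$ satisfies $w_\lambda=0$. Hence only the coordinates $w_\lambda$ with $\lambda\in P(2r,d,n)^{\preceq\kappa(\bt)}$ can be nonzero.

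The second step restricts the linear system. By \cref{lemma:equations-v-lambda}, the full vector $(w_\lambda)_{\lambda\in P(2r,d,n)}$ lies in $\nullspace(A_{2r,d,n})$. Since the coordinates indexed by $\lambda\succ\kappa(\bt)$ vanish, the truncated vector $(w_\lambda)_{\lambda\preceq\kappa(\bt)}$ lies in the nullspace of the submatrix obtained by deleting those columns. By \cref{def:truncated-matrix}, this submatrix is exactly $A_{2r,d,n}^{\preceq\kappa(\bt)}$, with the full row set $P(2r,d,n-1)$.

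The third step applies the nullity result. Observe that $\bt=\mathbf{1}_{d-1}+\be_{j_\circ+1}=\bt_{d-2,j_\circ}$, so $\kappa(\bt)=\kappa(\bt_{d-2,j_\circ})$. The hypothesis on $e$ reads $e\ge\max\{\lfloor d/2\rfloor-1,\, j_\circ+1\}$, because $\lfloor r/2\rfloor=\lfloor d/2\rfloor-1$. Therefore \cref{thm:nullity-truncated-matrix2} (which needs $d\ge3$, fine since $d\ge4$) gives $\nullity\bigl(A_{2r,d,n}^{\preceq\kappa(\bt)}\bigr)=0$. Hence $w_\lambda=0$ for all $\lambda\preceq\kappa(\bt)$ as well, and together with the first step this gives $w_\lambda=0$ on all of $P(2r,d,n)$.

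There is no genuine obstacle left, since the hard combinatorics is contained in \cref{thm:nullity-truncated-matrix2}. The only point needing care is to check that deleting the columns with $\lambda\succ\kappa(\bt)$ is legitimate. This holds because the deleted coordinates are known to be zero, while the equations indexed by all rows $\mu\in P(2r,d,n-1)$ are kept. A second check is the range $1\le n\le 2rd$ required by \cref{lemma:equations-v-lambda}; it holds since $n\ge r(r+1)+1\ge 1$ and the case $n>2rd$ was handled at the start.
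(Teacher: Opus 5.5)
Your proposal is correct and follows the paper's proof: you kill the coordinates with $\lambda\succ\kappa(\bt)$ by \cref{prop:thm3.2}\ref{prop:thm3.2-1}, then place the remaining vector $(w_\lambda)_{\lambda\preceq\kappa(\bt)}$ in the nullspace of the truncated matrix via \cref{lemma:equations-v-lambda}, and finally apply \cref{thm:nullity-truncated-matrix2} with $\ell=j_\circ$. Your additional checks are sound and slightly more careful than the paper's write-up, namely the identification $\bt=\bt_{d-2,j_\circ}$, the rewriting $\lfloor r/2\rfloor=\lfloor d/2\rfloor-1$, and the separate treatment of the range $n>2rd$.
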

\begin{proof}[Proof of Claim \ref{claim:lower-bound-on-e}]
Fix such an integer $e$.
By \cref{prop:thm3.2}\ref{prop:thm3.2-1}, we already know that $w_\lambda=0$ for all $\lambda\in P(2r,r+2,n)$ satisfying $\lambda \succ \kappa(\bt)$.
Therefore, it remains to show that $w_\lambda=0$ for all $\lambda\in P(2r,r+2,r(r+1)+e)$ with $\lambda \preceq \kappa(\bt)$.
By \cref{lemma:equations-v-lambda}, these numbers form a vector $(w_\lambda)_{\lambda \preceq \kappa(\bt)}$ lying in the nullspace of the truncated matrix $A_{2r,r+2,r(r+1)+e}^{\preceq \kappa(\bt)}$ as defined in \cref{def:truncated-matrix}.
Applying \cref{thm:nullity-truncated-matrix2} with $\ell=j_\circ$, we obtain
\[
\nullity\bigl(A_{2r,r+2,r(r+1)+e}^{\preceq \kappa(\bt)}\bigr)=0
\]
since $e\ge \max\bigl\{\, \lfloor r/2\rfloor, \, j_\circ+1 \,\bigr\}$.
This proves the claim.
\renewcommand{\qedsymbol}{}
\end{proof}

Now recall from \cref{prop:thm3.2}\ref{prop:thm3.2-2} that $w_{\kappa(\bt)}>0$.
Since $\kappa(\bt) \in P\bigl(2r,r+2,r(r+1)+2j_\circ\bigr)$, \cref{claim:lower-bound-on-e} implies that
\begin{align}
\label{eq:j0}
2j_\circ < \max\bigl\{\, \lfloor r/2\rfloor, \, j_\circ+1 \,\bigr\}.
\end{align}
Recall that $d\ge 4$ by assumption, so that $r=d-2\ge 2$. If $\lfloor r/2\rfloor<j_\circ+1$, then \cref{eq:j0} becomes $2j_\circ<j_\circ+1$, i.e., $j_\circ=0$. But $j_\circ+1>\lfloor r/2\rfloor\ge 1$, a contradiction.
So we indeed have $\max\bigl\{\, \lfloor r/2\rfloor, \, j_\circ+1 \,\bigr\} = \lfloor r/2\rfloor$.
It thus follows from \cref{claim:lower-bound-on-e} and \cref{thm:plov=d+max} that
\[
\plov(f) \le d + r(r+1) + \lfloor r/2\rfloor - 1 = d(d-2) + \lfloor d/2 \rfloor.
\]

Finally, \cref{prop:same-parity} gives $\plov(f)\equiv d \pmod 2$.
Hence the upper bound $d(d-2) + \lfloor d/2 \rfloor$ cannot be attained when $\lfloor d/2 \rfloor$ is odd, i.e., $d\equiv 2,3 \pmod 4$.
This completes the proof of \cref{thm:k2d-4}.
\end{proof}

\begin{remark}
From the proof of \cref{thm:k2d-4}, \cref{eq:j0} turns out to be $2j_\circ \le \lfloor r/2\rfloor-1$, i.e.,
\[
0\le j_\circ \le \left\lfloor\frac{\lfloor r/2 \rfloor-1}{2}\right\rfloor = \lfloor d/4 \rfloor - 1.
\]
This shows that the unique repeated part $2j_\circ$ in the distinguished restricted partition $\kappa(\bt)$ cannot be too large.
In particular, $j_\circ=0$ whenever $4\le d\le 7$.
When $d=3$, i.e., $r=1$, \cref{eq:j0} also infers that $j_\circ=0$.
However, our method cannot go beyond the lower bound on $e$ in \cref{claim:lower-bound-on-e} which inherits from \cref{thm:nullity-truncated-matrix2}; see \cref{rmk:trun-A-table}.
\end{remark}

\begin{proof}[Proof of \cref{cor:gap}]
By \cref{thm:main1,thm:k2d-4}, if $d\ge 4$, then
\[
\plov(f)\notin \bigl(d(d-2)+2\lfloor d/4\rfloor,\, d^2\bigr).
\]
The lower dimensional cases $d=2, 3$ were classified explicitly by Artin--Van den Bergh \cite[Theorem~1.7]{AVdB90} and Lin--Oguiso--Zhang \cite[Corollary~1.3]{LOZ25}, respectively.

Alternatively, if $d=2$, one immediately obtains $\plov(f) \notin (2,4)$ from the parity principle; see \cref{prop:same-parity}.
When $d=3$, if $k=4$ then $\plov(f)=9$ by \cref{thm:main1}; if $k\le 2$ then $\plov(f)\le 5$ by \cref{rmk:odd-d}.
Hence we get $\plov(f) \notin (5,9)$.
\end{proof}

\begin{proof}[Proof of \cref{cor:explicit-values-lower-dim}]
Combining \cref{thm:main1} with \cref{thm:upper-bounds}, we now have the following general bounds for $\plov(f)$:
\begin{align}\label{eq:plov-bounds}
d+\frac{k(k+2)}{4} \le \plov(f) \le (k/2+1)d.
\end{align}
Assume that $d=4$ and $k$ is a nonnegative even integer with $0 \le k \le 2d-2=6$.
Clearly, $k=0$ if and only if $\plov(f)=d$.

If $k=2$, then \cref{eq:plov-bounds} becomes $6\le \plov(f) \le 8$.
According to \cref{prop:same-parity}, $\plov(f)$ is even.
So, $\plov(f)=6$ or $8$.
By \cref{lem:possible-plov}, both values are realizable by abelian varieties,
corresponding to $\mu=(2,1,1,0)$ and $\mu=(2,2,0,0)$, respectively.

If $k=4$, then \cref{thm:k2d-4} asserts that $\plov(f) \le 10$. Combining this with \cref{eq:plov-bounds}, we obtain $\plov(f)=10$, which is also realizable by \cref{lem:possible-plov}, corresponding to $\mu=(3,1,0,0)$.

If $k=6$ is maximal, then \cref{thm:main1} shows that $\plov(f)=4^2$, which is also realizable.
\end{proof}

To conclude, in view of \cref{lem:possible-plov}, we ask the following question.

\begin{question}[{cf.~\cite[Question~1.5(2)]{LOZ25}}]
\label{final question}
For every pair of integers $d\in \bZ_{>0}$ and $k\in 2\bZ \cap[0, 2d-2]$,  
are the following two sets the same:
\[
\Biggl\{\, \plov(f) \,:\,
\begin{aligned}
&  X \text{ is a normal projective variety}, \, \dim X= d,\\
& f\in \Aut(X), \, \deg_1(f^n) \asymp n^k
\end{aligned}
\,\Biggr\}
=
\Biggl\{\, \sum_{i=1}^{d} \mu_i^2 \,:\, 
\begin{aligned}
& \mu \in P(d,d,d), \\
& \mu_1=k/2+1
\end{aligned}
\,\Biggr\}?
\]
\end{question}

An affirmative answer to \cref{final question} will provide a very clear picture on the classification problem for $\plov$.
By \cref{lem:possible-plov}, it suffices to prove that the left-hand set is contained in the right-hand set.
The Second Gap Principle, i.e., \cref{conj:SGP}, is nothing but the case when $k=2d-4$ in \cref{final question}.
We could also ask what is the left-hand set for a given variety $X$ (e.g., a Calabi--Yau variety), but this might be much harder. 

In summary, \cref{final question} is already known to be true in the following cases:
\begin{itemize}
\item $k=0$ by the projection formula;
    \item $k=2$ by \cref{prop:same-parity,rmk:odd-d};
    \item $k=2d-2$ by \cref{thm:main1};
    \item $k=2d-4$ and $d\le 7$ by \cref{cor:gap} and \cref{rmk:suspected-gap};
    \item $d\le 4$ by \cite[Theorem~1.7]{AVdB90}, \cite[Corollary~1.3]{LOZ25}, and \cref{cor:explicit-values-lower-dim}.
\end{itemize}
However, it remains open even for $d=5$ and $k=4$.

\bibliographystyle{amsalpha}

\bibliography{mybib}

\end{document}